\providecommand{\gram}[2]{\left[ #1 , #2 \right]} \providecommand{\scal}[2]{\left\langle #1 , #2 \right\rangle}
\newcommand{\Bc}{{\mathcal B}}\newcommand{\Cc}{{\mathcal C}}
\newcommand{\Dc}{{\mathcal D}}
\newcommand{\Gc}{{\mathcal G}}
\newcommand{\Lc}{{\mathcal L}}\renewcommand{\Mc}{{\mathcal M}}
\newcommand{\Rc}{{\mathcal R}}\newcommand{\Tc}{{\mathcal T}}
\newcommand{\Zc}{{\mathcal Z}}\newcommand{\As}{{\mathscr A}}
\newcommand{\Cs}{{\mathscr C}}\newcommand{\Ds}{{\mathscr D}}
\newcommand{\Rs}{{\mathscr R}}
\newcommand{\Ls}{{\mathscr L}}\newcommand{\Ss}{{\mathscr S}}
\newcommand{\C}{{\mathbb C}}\newcommand{\D}{{\mathbb D}}
\newcommand{\N}{{\mathbb N}}
\newcommand{\R}{{\mathbb R}}
\newcommand{\bA}{\boldsymbol{A}}\newcommand{\bB}{\boldsymbol{B}}
\newcommand{\ba}{{\mathbf a}}
\numberwithin{equation}{section}
\newtheorem{thm}{\sc Theorem}[section]
\newtheorem{lem}[thm]{\sc Lemma}
\newtheorem{prop}[thm]{\sc Proposition}
\newtheorem{cor}[thm]{\sc Corollary}
\theoremstyle{remark}
\newtheorem{defn}{\sc Definition}[section]
\newtheorem{ex}{\bf Example}[section]
\newtheorem{rem}{Remark}[section]
\begin{document}
\title{On selfadjoint spectral theory of random operators}
\author{P\u astorel Ga\c spar}
\address{Department of Mathematics and Computer Science,
Faculty of Exact Sciences,
``Aurel Vlaicu'' University, Arad,
Str. E. Dr\u agoi nr. 2,
310330  Arad,
Romania }
\email{pastorel.gaspar@uav.ro}
\date{\today}
\begin{abstract}
In this paper spectral theorems for not necessarily continuous normal and self-adjoint random operators on a complex separable Hilbert space are proved.
\end{abstract}
\subjclass[2010]{60G60 ; 46F12; 42B10}
\keywords{stochastic mappings, random operator, unitary random operator, random Hermitian operator, random projection operator valued measures, spectral representation}
\maketitle

\section{Introduction}

In quite recent times the study of stochastic processes or random fields was enlarged to the framework of multivariate stochastic mappings (see \cite{ThangRndMapInfDimSp98}, \cite{GaPopa}) in order to treat in a unitary way also other probabilistic concepts such as {\it stochastic measures} and {\it stochastic integrals}, {\it random distributions} or {\it random distribution fields}, as well as {\it random operators} (see \cite{ThangSerRepr98}, \cite{ThangRndOpBan87}, \cite{ThangRndMapInfDimSp98}, \cite{ThangAdjCompRndOp95}, \cite{ThangProcExtRndOp09}, \cite{ThangRndBndOpKyu98}, \cite{ThangRndOptoRndBndOp08}, \cite{Sko}, \cite{GaPopaII}, \cite{GaPopaIII}, \cite{PGaLavSid}, \cite{Hack5}, \cite{Hack4}), but also in an attempt to develop in this setting a corresponding random spectral theory (see \cite{Sko}, \cite{Thang4}, \cite{ThangAdjCompRndOp95}, \cite{Hack5}, \cite{Hack4}).

Specifically, in \cite{Thang4}, in terms of measurable families of deterministic continuous linear operators, the continuous normal and Hermitian random operators, random spectral measures were defined and the random version of the spectral (integral) representation theorems were given.

On the other hand, intending to cover the important class of random Schr\"odinger operators, W. Hackenbroch extended the concept of continuous random operator, to densely defined random operator, treating in \cite{Hack4} and \cite{Hack5} some aspects of random spectral theory for symmetric densely defined random operators.

We aim here to continue the work from \cite{Thang4} and \cite{Hack5} developing a stochastic selfadjoint spectral theory for not necessarily continuous random operators, by using the theory of measurable fields of unbounded operators on Hilbert spaces.

The plan of the paper runs as follows. The remainder of this Section contains some general concepts and basic results especially regarding continuous random operators.

In Section 2 we select from \cite{Schmud}, \cite{Hack4} and \cite{Thang4} some specific results (regarding decomposable random operators and the random adjoint) reformulated and completed in a way to suit our present development.

In Section 3 we define random normality, together with the integral of a bounded and then of an unbounded measurable function with respect to a random projection operator valued measure, obtaining in this way (not only continuous) random normal operators.

Section 4 deals with transforming random operators into continuous random operators and viceversa aiming to extend in Section 5 the results from \cite{Thang4} regarding the spectral theorem for continuous random normal operators to the unbounded framework.

For two linear topological spaces $X$ and $Y$, $\Bc(X, Y)$ means the space of continuous linear operators from $X$ to $Y$, while $\Lc (X, Y)$ denotes the set of linear operators $T: D(T) \to Y$, where $D(T)$ is some linear subspace of $X$. When $T$ is densely defined, i.e. $D(T)$ is dense in $X$, we shall write $T\in \Lc_d(X, Y)$ and when $T$ is closed, i.e. the graph ${\mathcal G}_T :\ = \{ (x, Tx),\ x \in D(T) \}$ of $T$ is closed in the topological linear product $X \times Y$, we use the notation $T\in \Cc\Lc(X, Y)$. When $T$ is densely defined and closed we write $T\in \Cc\Lc_d (X, Y)$. Note that when, for an operator $S \in \Lc (X, Y)$, $\Gc_S$ is not necessarily closed in $X \times Y$, but its closure $\overline{\Gc_S}$ is the graph of an operator, denoted by $\overline{S}$, then $S$ is said to be \emph{closable} and $\overline{S}$ is called its {\it closure}. Moreover, when $T \in \Cc\Lc (X, Y)$, a subspace $\Dc \subset \Dc(T)$ is a \emph{core} for $T$, when $T = \overline{T|_\Dc}$

Now $(\Omega, \As, \wp)$ denotes a fixed complete separable probability space, while $(H, \langle \cdot, \cdot \rangle_H)$ a fixed separable, complex Hilbert space. The most frequently used spaces of $H$-valued random variables (measurable functions) in what follows, will be $L^0 (\wp, H)$, the $F$-space of all $H$-valued random variables with the topology of convergence in measure and $L^2(\wp, H)$, the Hilbert space of all second order $H$-valued random variables with the natural scalar product
\begin{equation}\label{eq:prod.scal.L2}
\scal{f}{g}_{L^2(\wp , H)} :\ = \int_\Omega \scal{f(\omega)}{g(\omega)}_H {\mathrm d} \wp (\omega) ,\qquad f, g \in L^2(\wp, H).
\end{equation}
Now we can define the fundamental concept of the paper, followed by a brief discussion.
\begin{defn}
When $\Lambda$ is an arbitrary set, then a mapping $\Phi$
\begin{equation} \label{eq:stoch.mapping}
\Lambda \ni \lambda \mapsto \Phi (\lambda) \in L^0(\wp , H) \ \  (\text{or }\  L^2 (\wp , H))
\end{equation}
is called a \emph{multivariate (second order) stochastic mapping} or briefly {\it m.(s.o.)s.m.} of index set $\Lambda$.

Is $\Lambda$ endowed with a structure of linear space, separate topological space or measurable space, then it is usually required for $\Phi$ to be also \emph{linear} and/or \emph{continuous} and/or \emph{measurable}, respectively.
\end{defn}

For a study of such m.(s.o.)s.m. see \cite{ThangRndMapInfDimSp98}, \cite{ThangSerRepr98}, \cite{GaPopa}, \cite{GaPopaII}, \cite{GaPopaIII}.

In this paper we restrain ourselves to the case of (s.o.) random operators, which are defined as follows.
\begin{defn}[see \cite{Hack4}, \cite{Hack5}] \label{def:rnd.op}
A multivariate (second order) stochastic linear mapping $A$, whose index set is a dense linear subspace $D(A)$ in some complex separable Hilbert space $G$, is called a \emph{(second order) random operator from $G$ to $H$} (or simply on $H$ if $G = H$).

When $D(A) = G$ and $A$ is linear and continuous then it is called a \emph{continuous (s.o.) random operator}, or, as in \cite{Hack5}, \emph{(s.o.) Skorohod operator.}
\end{defn}

Are the values of a (s.o.) random operator constant functions from $L^0(\wp, H)$ (or from $L^2(\wp, H)$), then we have an ordinary densely defined operator from $G$ to $H$. Hence, the (s.o.) random operators from $G$ to $H$ (respectively the continuous (s.o.) random operators from $G$ to $H$) are stochastic generalizations of deterministic densely defined (respectively continuous) linear operators from $G$ to $H$. They are needed in modelling some phenomena, where random outputs rather than deterministic ones can be identified.

The class $\Lc_d(G, L^p (\wp , H))$ of (s.o.) random operators from $G$ to $H$ (for $G=H$ see \cite{Hack4}, \cite{Hack5}) will be henceforth denoted by $\Rs^p (\Omega; G, H)$, while the class $\Bc (G, L^p (\wp , H))$ of continuous (s.o.) random operators (first studied in \cite{Sko}; see also \cite{ThangRndOpBan87}) will be denoted here by $\Ss^p(\Omega ; G, H)$ ($\subset \Rs^p(\Omega; G,H)$) for $p=0$ and $p=2$, respectively\footnote{Let us mention that in \cite{ThangRndOptoRndBndOp08} the class $\Ss^p (\Omega; G, H)$ of \emph{$p$-order random operators} for each $p\in [0, \infty)$ was considered. Naturally, it is possible to introduce also the class $\Rs^p (\Omega; G, H) (\supset \Ss^p(\Omega; G, H))$ of \emph{$p$-order densely defined random operators} from $G$ to $H$, when in \eqref{eq:stoch.mapping} we consider the Lebesgue spaces $L^p (\wp, H)$.}.

After A. V. Skorohod (see for example \cite{Sko}) the class of continuous random operators was also studied (for $p=0$) by D. H. Thang and others (\cite{ThangRndOpBan87} -- \cite{Thang4}),
who often replace the Hilbert spaces $G$ and $H$ by Banach spaces $X$ and $Y$, respectively.


Now a class of general random operators can be constructed as follows (see \cite{Hack4}, \cite{Hack5}).
\begin{ex}\label{exmp:unu}
Let $\{ a(\omega)\}_{\omega \in \Omega}$ be a family of deterministic densely defined closed linear operators $a(\omega) \ (\text{i.e.,} \in \Cc\Lc^d (G, H)$), indexed by the random parameter $\omega \in \Omega$, which is measurable (2 - summable) in the following sense: it admits a common core $D$ ($D \subset D(a(\omega))\subset G$) such that for each $x \in D,\ A x \in L^0(\wp , H)$ ($L^2(\wp, H)$,  respectively), where
\begin{equation} \label{eq:def.rnd.op}
(A x)(\omega) :\ = a(\omega) x , \text{ for almost all } \omega \in \Omega.
\end{equation}
Then we have that $A\in \Rs^p (\Omega ; G, H),\  p = 0, 2$ ($D = D(A)$ being dense in $G$). This operator will be called the \emph{(second order) random operator from $G$ to $H$ associated to the measurable (2 - summable) family $\{ a(\omega)\}_{\omega \in \Omega}$}.

\end{ex}

When we consider a measurable (2 - summable) family $ \{ a(\omega)\}_{\omega\in\Omega}$ of deterministic continuous linear operators from $G$ to $H$, the above measurability (2-summability) condition is naturally satisfied for $D = G$. So $A$ defined as in \eqref{eq:def.rnd.op} is a \emph{continuous (s.o.) random operator} from $G$ to $H$. This subclass $\Ls^p (\Omega; G, H)$ of $\Ss^p(\Omega; G , H)$ was introduced in
\cite[Section 4]{ThangRndOpBan87} and extensively treated for $p=0$ in \cite{ThangRndBndOpKyu98}, where its elements were characterized in terms of the boundedness condition (Theorem 3.1) and also in terms of its decomposable extension (Theorems 5.2 and 5.3). Resuming we can formulate
\begin{prop}\label{prop:bnd.cond}
Let $A\in \Ss^p(\Omega; G, H)$ be a (s.o.) Skorohod operator from $G$ to $H$. Then the following conditions are equivalent:
\begin{enumerate}[(i)]
\item $ A \in \Ls^p(\Omega; G, H)$;
\item There exists a positive random variable $r(\cdot) \in L^p(\wp) :\ = L^p (\wp, \C)$ such that
\begin{equation}\label{eq:rnd.bnd.cond}
\| Ax (\omega) \| \le r(\omega) \|x\|,\quad x\in G,\quad \omega \text{ - a.e. }
\end{equation}
\item $A$ admits a continuous linear extension $\tilde A$ from $G$ to the whole $L^p (\wp, G)$ such that
\begin{equation}\label{eq:ext}
\tilde A (\chi_\alpha x) = \chi_\alpha A x ;\quad x\in G, \  \alpha \in \As,
\end{equation}
$\chi_\alpha$ being the characteristic function of the set $\alpha$.
\end{enumerate}
\end{prop}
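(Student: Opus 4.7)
The plan is to establish the circle of implications $(i) \Rightarrow (ii) \Rightarrow (iii) \Rightarrow (i)$, along the lines of \cite{ThangRndBndOpKyu98, ThangRndOpBan87}.

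For $(i) \Rightarrow (ii)$, I would extract the pointwise operator norm from the representing family. Given $A$ associated with a measurable (resp.\ $2$-summable) family $\{a(\omega)\}_{\omega \in \Omega}$ of deterministic continuous linear operators, set $r(\omega) := \|a(\omega)\|_{\Bc(G,H)}$. Measurability of $r$ follows from separability of $G$: fixing a countable dense subset $\{x_n\}_{n \ge 1}$ of the unit ball of $G$, $r(\omega) = \sup_n \|a(\omega)x_n\|_H$ is a countable supremum of measurable scalar functions. The bound \eqref{eq:rnd.bnd.cond} is then immediate, while $L^p(\wp)$-integrability of $r$ is trivial for $p=0$ and is provided by the $2$-summability of the representing family for $p=2$.

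For $(ii) \Rightarrow (iii)$, I would first define $\tilde A$ on the dense subspace of $L^p$-simple functions $f = \sum_{i=1}^n \chi_{\alpha_i} x_i$ (with pairwise disjoint $\alpha_i$) by the natural formula $\tilde A f := \sum_{i=1}^n \chi_{\alpha_i} A x_i$. The pointwise bound \eqref{eq:rnd.bnd.cond} gives $\|\tilde A f(\omega)\|_H \le r(\omega)\|f(\omega)\|_G$ almost surely, from which continuity of $\tilde A$ from $L^p(\wp, G)$ into $L^p(\wp, H)$ on simple functions follows directly: convergence in measure is preserved when $p=0$, while for $p=2$ the integrated bound controls $\|\tilde A f\|_{L^2(\wp,H)}$ in terms of $\|f\|_{L^2(\wp,G)}$ via a H\"older-type estimate. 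Extension by density yields the desired $\tilde A$, and \eqref{eq:ext} holds by construction on the dense subspace, hence everywhere by continuity.

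For $(iii) \Rightarrow (i)$, the identity \eqref{eq:ext} together with linearity and continuity of $\tilde A$ propagates to show that $\tilde A$ commutes with multiplication by every simple, and hence by every bounded measurable, scalar function. Such a local continuous operator from $L^p(\wp, G)$ to $L^p(\wp, H)$ must, by a standard disintegration argument, arise from a measurable field $\{a(\omega)\}$ of deterministic continuous operators via $(\tilde A f)(\omega) = a(\omega)f(\omega)$ almost surely. Specializing to $f = \chi_\Omega x$ yields $Ax(\omega) = a(\omega)x$, proving $A \in \Ls^p$. The main obstacle I anticipate is precisely this last disintegration step: one must realize $\tilde A$ as a genuine measurable field of continuous operators by choosing the exceptional null sets compatibly across a countable dense set of test vectors in $G$, and then extending pointwise by continuity using the operator bound inherent in $\tilde A$. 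Completeness of $\wp$ and separability of $G$ are essential here, and the argument should follow the decomposable-operator framework used in \cite{Schmud} and invoked in \cite{ThangRndBndOpKyu98}.
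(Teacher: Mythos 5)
The paper itself offers no argument for this proposition: it is stated as a r\'esum\'e of Theorems 3.1, 5.2 and 5.3 of \cite{ThangRndBndOpKyu98}, which are proved there for $p=0$. Your cycle $(i)\Rightarrow(ii)\Rightarrow(iii)\Rightarrow(i)$ is the natural reconstruction of those arguments, and for $p=0$ it is essentially sound: $r(\omega)=\sup_n\|a(\omega)x_n\|_H$ is measurable and a.e.\ finite, hence lies in $L^0(\wp)$; the extension on simple functions passes to general $f$ by convergence in measure; and the closing disintegration step, which you correctly identify as the real work (compatible null sets over a countable dense subset of $G$, completeness of $\wp$, separability), is precisely what Theorems 5.2--5.3 of \cite{ThangRndBndOpKyu98} supply. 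You leave that step as a sketch, but so does the paper.

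For $p=2$, however, two of your steps have genuine gaps. In $(i)\Rightarrow(ii)$ you claim that $r\in L^2(\wp)$ ``is provided by the $2$-summability of the representing family''; but $2$-summability only asserts $a(\cdot)x\in L^2(\wp,H)$ for each fixed $x$, and a countable supremum of functions in $L^2(\wp)$ need not be in $L^2(\wp)$. Concretely, take $\Omega=[0,1]$ with Lebesgue measure, $G=H=\ell^2$, and $a(\omega)$ diagonal with entries $c_n(\omega)=\sqrt{n}\,\chi_{(1/(n+1),1/n]}(\omega)$: then $\int\|a(\omega)x\|^2\,d\wp\le\|x\|^2$ for every $x$, so the family is $2$-summable and $A\in\Ss^2$, yet $\int\|a(\omega)\|^2\,d\wp=\sum_n 1/(n+1)=\infty$, and since any admissible $r$ must dominate $\|a(\cdot)\|$ a.e., no $r\in L^2(\wp)$ satisfying \eqref{eq:rnd.bnd.cond} exists. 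In $(ii)\Rightarrow(iii)$ the ``H\"older-type estimate'' does not close either: the pointwise bound gives only $\|\tilde Af\|^2_{L^2(\wp,H)}\le\int r(\omega)^2\|f(\omega)\|^2_G\,d\wp$, a product of two $L^1$ functions that is not controlled by $\|f\|^2_{L^2(\wp,G)}$ unless $r\in L^\infty(\wp)$. These are not cosmetic defects; they indicate that for $p=2$ the three conditions are not interchangeable under the stated hypotheses (note that the paper, in Remark \ref{rem:extens}, only asserts the single implication from \eqref{eq:rnd.bnd.cond} with $r\in L^2(\wp)$ to membership in $\Ls^2$). You should restrict the full equivalence cycle to $p=0$ and state explicitly what extra integrability of $r$ your $p=2$ estimates require.
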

Let us also note the following properties of the class $\Ls^p (\Omega ; G, H)$.
\begin{rem}\label{rem:extens}
When in \eqref{eq:rnd.bnd.cond} above $r(\cdot)\in L^2(\wp)$, then $A \in \Ls^2 (\Omega ; G, H)$, while the relation \eqref{eq:ext} implies
\begin{equation}\label{eq:ext.gen}
  \tilde A (\varphi x) = \varphi A x,\quad x\in G, \ \varphi \in L^q (\wp),
\end{equation}
where $q=0$ or $q=\infty$ if $p=0$ or $p=2$ respectively. The first statement is a simple verification , while \eqref{eq:ext.gen} results by taking the limit in \eqref{eq:ext} (for $p=q=0$ see property (2) \cite[p. 271]{ThangRndBndOpKyu98}).
\end{rem}

\begin{rem}\label{rem:doi}
The linear subspace $\Ls^p (\Omega ; G, H)$ is dense in $\Ss^p (\Omega; G, H)$ in the topology of pointwise convergence of operators. The proof given for $p=0$ in \cite[Remark (ii) Section 1]{Hack4} goes also for $p=2$ with a slight modification.

Now, by using a version of a ``nuclear spectral theorem''
(\cite[Proposition 12.2.1, pp. 336]{Schmud}) we obtain that the Hilbert Schmidt operators from
$G$ to $L^2 (\wp , H)$ are decomposable s.o. Skorohod operators from $G$ to $H$.
\end{rem}
More precisely, we have
\begin{rem} \label{rem:trei}
The operator space $\Cs_2 (G, L^2(\wp, H))$ is contained not only in $\Bc (G, L^2 (\wp , H)) = \Ss^2 (\Omega ; G, H)$ but also in $\Ls^2 (\Omega ; G, H)$:
\begin{equation} \label{eq:inclus.Hilb.Schm}
\Cs_2 (G, L^2 (\wp ,H)) \subset \Ls^2 (\Omega ; G, H) \subset \Ls^0 (\Omega ; G, H) ,
\end{equation}
which means that, for each $A \in \Cs_2 (G, L^2 (\wp , H))$ and each $\omega \in \Omega$, there exists a Hilbert Schmidt operator $a(\omega) \in \Cs_2(G, H)$ such that for each $x\in G,\ (Ax)(\omega) = a(\omega) x$, $\omega$ - a.e., where $a(\cdot) x \in L^2(\wp, H)$, what means $A \in \Ls^2 (\Omega ; G, H)$.
Conversely, if $A$ is a second order random operator from $G$ to $H$, associated to a family $\{a(\omega)\}_{\omega \in \Omega}$ for which $a(\omega) \in \Cs_2 (G, H), \omega \in \Omega$, then $A\in \Cs_2 (G , L^2 (\wp , H))$.
\end{rem}

\section{Decomposable extensions and adjoints of general random operators}
Now, having in view the three above equivalent conditions from Proposition \ref{prop:bnd.cond}, as well as the results from \cite{Hack4}, \cite{Hack5}, in order to identify the analogue of the class $\Ls^p(\Omega; G, H)$ in the unbounded framework, the most adequate way would be to transpose in this setting and discuss first condition (iii) regarding the special extension , which because of \eqref{eq:ext.gen}, as we shall see, will be a decomposable operator. To this aim let us begin by recalling some facts about pointwise action operators between $L^p (\wp , G)$ and $L^p (\wp , H)$.
\begin{defn}[see \cite{Hack4}]\label{def:op.pointw.appl}
The mapping $\bA_{\mathbf a} : D (\bA_{\mathbf a}) \to L^p (\wp , H),\ (p=0 \text{ or } 2)$, where ${\mathbf a} = \{ a(\omega) \}_{\omega \in \Omega}$ is a measurable ($2$ - summable) family of operators from $G$ to $H$, defined by
\begin{equation} \label{eq:def.pointw.op}
\bA_{\mathbf a} f (\omega) :\ = a(\omega) f(\omega), \qquad f \in D(\bA_{\mathbf a}),\text{ for almost all } \omega\in\Omega
\end{equation}
where
\begin{multline} \label{eq:dom.pointw.op}
D(\bA_{\mathbf a}) :\ = \{ f \in L^p(\wp , G) : \ f(\omega) \in D(a(\omega)) \text{ a.e.}, \\
\text{ such that } \ a(\cdot) f(\cdot) \in L^p (\wp , H) \},
\end{multline}
will be called the \emph{operator of pointwise application} associated to the family ${\mathbf a}$.
\end{defn}
An important example of an operator of pointwise application is the so called \emph{diagonalizable operator $m_\varphi^G$} \footnote{$m^G_\varphi f = \varphi f, f \in L^p (\wp, G)$, the multiplication operator with the scalar measurable function $\varphi\in L^q(\wp)$.},i.e. that which is associated to the family of operators $\{ \varphi(\omega) 1_G \}_{\omega\in\Omega}$, with $\varphi \in L^q (\wp) = L^q (\wp , {\mathbb C})$, where here and throughout the paper, to $p=0$ and $p=2$ correspond $q=0$ and $q=\infty$, respectively.
\begin{defn}\label{def:decompos.closed.op}
A closed operator $\bA : D(\bA) (\subset L^p(\wp, G)) \to L^p(\wp, H),\ p=0,\ 2$ is decomposable, if it intertwines all pairs of diagonalizable operators on $G$ and on $H$ (i.e., $\bA  m_\varphi^G \supset m^H_\varphi \bA ,\ \varphi \in L^q (\wp),\ q=0$, respectively $q=\infty$).
\end{defn}
Let us now remark that the measurable families $\ba = \{a(\omega)\}_{\omega \in \Omega}$, in terms of which the operators $A\in \Ls^p (\Omega; G, H)$ are described as in Proposition \ref{prop:bnd.cond}, are basically the same as the measurable fields of continuous linear operators from $G$ to $H$, used in the book \cite{Dixm}. Along this idea, as was observed in \cite{Hack4} and \cite{Hack5} the most appropriate measurable families which permit to treat the decomposability of random unbounded operators in the spirit of Example \ref{exmp:unu} are the measurable fields of unbounded linear operators used in \cite{Schmud}, which are defined as follows.
\begin{defn}\label{def:msrb.fld}
 A family ${\mathbf a} = \{ a(\omega)\}_{\omega \in \Omega}$ of closed linear operators from $G$ to $H$ is called a \emph{measurable ( 2 - summable) field} of operators, when there exists a sequence $f_n \in L^0 (\wp, G)$ ($f_n \in L^2 (\wp, G)$, respectively), such that for each $\omega \in \Omega,\ f_n (\omega) \in D(a(\omega))$ and $\{ (f_n(\omega), g_n(\omega))\}_{n\in \N}$ is a total subset of the graph ${\mathcal G}_{a(\omega)}$ of $a(\omega)$, where $g_n (\omega) = a(\omega) f_n(\omega)$.
\end{defn}
As is easily seen, a measurable (2 - summable) family of continuous linear operators $\{ a(\omega) \}_{\omega\in\Omega}$ from $G$ to $H$ is a measurable (2 - summable) field of operators. Also, by using the characterizations from section 12.1 of \cite{Schmud} it is not difficult to see that the families used in
Example \ref{exmp:unu} fit in this category.


Now the definitions \ref{def:op.pointw.appl}, \ref{def:decompos.closed.op}, \ref{def:msrb.fld}, are connected through (see \cite{Schmud}, \cite{Hack5} for $p=2$ and \cite{ThangRndBndOpKyu98} for $p=0$)

\begin{prop} \label{prop:decompos.clsd.op}
A closed operator $\bA$ from $L^p (\wp , G)$ into $L^p(\wp, H),\ p=0,2$ is decomposable, iff it is an operator of pointwise application  associated to some (uniquely determined) measurable ($2$ - summable) field ${\mathbf a} = \{ a(\omega) \}_{\omega \in \Omega}$ of operators from $G$ into $H$, i.e. $\bA = \bA_{{\mathbf a}}$.

If this is the case we also say that $\bA$ is \emph{decomposable by the field ${\mathbf a}$}.
\end{prop}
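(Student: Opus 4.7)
The overall strategy is to identify the closed decomposable operator $\bA$ with its graph $\Gc_\bA$, viewed as a closed subspace of $L^p(\wp, G \oplus H) \cong L^p(\wp, G) \times L^p(\wp, H)$, and to transfer the decomposability of $\bA$ to a module-type stability of $\Gc_\bA$ under the diagonal multiplications $m_\varphi^{G \oplus H}$. One then invokes the direct-integral decomposition of such stable subspaces (the machinery of \cite{Schmud}, used for $p = 2$ in \cite{Hack5} and for $p = 0$ in \cite{ThangRndBndOpKyu98}) to produce a measurable field $\{\Gc(\omega)\}_{\omega \in \Omega}$ of closed subspaces of $G \oplus H$, and finally checks that each fiber $\Gc(\omega)$ is itself the graph of a closed operator $a(\omega)$.

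For the easy direction ($\Leftarrow$), suppose $\bA = \bA_\ba$ for a measurable (2-summable) field $\ba = \{a(\omega)\}$. If $f_n \in D(\bA_\ba)$ with $f_n \to f$ and $\bA_\ba f_n \to g$ in the respective $L^p$-spaces, some subsequence converges a.e., and the closedness of each $a(\omega)$ then forces $f(\omega) \in D(a(\omega))$ and $a(\omega) f(\omega) = g(\omega)$ a.e., so $\bA_\ba$ is closed. Decomposability is a pointwise computation: for $f \in D(\bA_\ba)$ and $\varphi \in L^q(\wp)$, the linearity of $D(a(\omega))$ gives $\varphi(\omega) f(\omega) \in D(a(\omega))$ a.e., hence $m_\varphi^G f \in D(\bA_\ba)$ and $\bA_\ba(m_\varphi^G f) = m_\varphi^H(\bA_\ba f)$.

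For the nontrivial direction ($\Rightarrow$), the inclusion $\bA m_\varphi^G \supset m_\varphi^H \bA$ for every admissible $\varphi$ is equivalent to $\Gc_\bA$ being stable under $m_\varphi^{G \oplus H}$. Using separability of $G \oplus H$ and of $L^p(\wp, G \oplus H)$, one selects a countable sequence $\{(f_n, \bA f_n)\}_n \subset \Gc_\bA$ whose orbit under $\{m_\varphi^{G \oplus H} : \varphi \in L^q(\wp)\}$ is dense in $\Gc_\bA$, and sets
\[
\Gc(\omega) := \overline{\mathrm{span}}\{(f_n(\omega), (\bA f_n)(\omega)) : n \in \N\}.
\]
The direct-integral apparatus of \cite{Schmud} then shows that $\Gc_\bA$ equals the set of sections $(f, h) \in L^p(\wp, G \oplus H)$ with $(f(\omega), h(\omega)) \in \Gc(\omega)$ a.e. The remaining delicate task is single-valuedness: since $\bA$ is an operator, $(0, h) \in \Gc_\bA$ forces $h = 0$, and this must be propagated to almost every fiber. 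If a set of positive measure supported a nonzero measurable section $\omega \mapsto (0, k(\omega)) \in \Gc(\omega)$, a measurable-selection argument combined with the multiplication stability would manufacture a pair $(0, k) \in \Gc_\bA$ with $k \neq 0$, a contradiction. Hence $\Gc(\omega) = \Gc_{a(\omega)}$ for a closed $a(\omega)$ almost everywhere, and the sequence $\{(f_n(\omega), (\bA f_n)(\omega))\}$ exhibits $\ba = \{a(\omega)\}$ as a measurable (2-summable) field in the sense of Definition \ref{def:msrb.fld}. Unfolding the definitions gives $\bA = \bA_\ba$; uniqueness is immediate since two fields with the same pointwise application operator have identical countable total subsets of their graphs a.e.

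The main obstacle is the passage from a decomposable closed \emph{subspace} to a measurable field of closed \emph{graphs}, that is, propagating the single-valuedness of $\bA$ to almost every fiber. This step relies on the measurable-selection lemma embedded in the unbounded direct-integral decomposition of \cite[Ch.~12]{Schmud}, without which one would obtain only a measurable field of closed linear relations rather than of closed operators.
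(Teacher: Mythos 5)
The paper gives no proof of this proposition---it simply cites \cite{Schmud} and \cite{Hack5} for $p=2$ and \cite{ThangRndBndOpKyu98} for $p=0$---and your argument reproduces exactly the route those sources take: identify $\Gc_{\bA}$ with a multiplication-stable closed subspace of $L^p(\wp, G\oplus H)$, decompose it into a measurable field of closed subspaces, and propagate the single-valuedness of $\bA$ to almost every fiber. Your sketch is correct in outline, including at the one genuinely delicate point (a fiberwise nonzero multivalued part would, via measurable selection, produce a nonzero section $(0,k)\in\Gc_{\bA}$, contradicting that $\bA$ is an operator), so it matches the intended proof.
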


It is the place now to remark that there are circumstances in which it is possible that not only the outputs but also the inputs are under the influence of a stochastic environment. It makes then perfect sense to consider extensions of (second order) random operators on $G$, to densely defined operators on $L^0 (\wp, G)$ (respectively on $L^2 (\wp, G)$), or just of random operators from $G$ to $H$ to densely defined operators from $L^2(\wp , G)$ to $L^2 (\wp , H)$. But such extensions are useful to our development when they are also decomposable. Therefore we give (see also \cite{Hack4} or \cite{Hack5})
\begin{defn}\label{def:extension}
  An operator $\bA \in \Lc_d (L^p(\wp , G), L^p (\wp , H))$ (for $p=0$ and $p=2$ respectively) is a \emph{decomposable extension} of the (s.o.) random operator $A$ from $G$ to $H$ (i.e. of $A \in \Rs^p (\Omega ; G, H)$), when for $q=0$ and $q=\infty$ respectively, we have
  $$D(\bA) \supset L^q(\wp) \otimes D(A) \supset D(A),$$ where
  \[ L^q(\wp) \otimes D(A) = \left\{ \sum\limits_{j=1}^{n} \varphi_j x_j, \varphi_j \in L^q (\wp), x_j \in D(A), n \in {\mathbb N} \right\}\]
  is a core of $\bA$ and
  \begin{equation}\label{eq:extension}
    \bA (\varphi x) = \varphi A x , \text{ for } \varphi \in L^q (\wp) ,\ x \in D(A).
  \end{equation}
\end{defn}

When $A$ is just a random operator from $G$ to $H$ (i.e. $A \in \Rs^0 (\Omega; G, H)$), then we also define a ``Hilbert type'' decomposable extension $\bA \in \Lc_d(L^2(\wp , G), L^2(\wp , H))$ through \eqref{eq:extension} but for $\varphi \in L^\infty (\wp)$ and $x \in D(A)$ satisfying $\varphi A x \in L^2 (\wp , H)$ and having a core
\begin{multline*}
 C(A) = \{ \sum_{j=1}^{n} \varphi_j x_j , \varphi_j \in L^\infty (\wp) ,\ x_j \in D(A)\ \text{ s.t. } \\
 \varphi_j A x_j \in L^2 (\wp  ,H), j=1, 2, \dots , n ,\ n \in \N \}.
\end{multline*}
The decomposable extension $\bA$ of $A$ is called \emph{minimal}, when $\bA \subset \bA'$ for each decomposable extension $\bA'$ of $A$.

\begin{rem} \label{rem:1.1}
The operator of pointwise application $\bA_{\mathbf a}$ associated to a $2$ - summable field ${\mathbf a}$ can serve as extension in the sense of Definition \ref{def:extension} of the second order random operator $A$, associated to the $2$ - summable family ${\mathbf a} = \{ a(\omega)\}_{\omega \in \Omega}$ as in Example \ref{exmp:unu}.

Indeed, it is obvious that $D(\bA_{\mathbf a})$ contains the elements from $D(A) (\subset G)$ as the constant functions from $L^2 (\wp , G)$ as well as the subspace $L^\infty (\wp) \otimes D(A)$, which easily leads to the fact that $\bA_{\mathbf a}$ extends $A$.
\end{rem}

For the existence of the minimal decomposable extension, first the notion of random adjoint is needed, concept introduced and treated for Skorohod operators in \cite[Lemma 3.3 and Theorem 3.5]{ThangAdjCompRndOp95} and \cite[Definition 3.6 and Lemma 2]{Thang4}, while for general random operators it is treated in \cite{Hack4} and \cite{Hack5}.


The concept of random adjoint, also necessary for the development of random spectral theory will be defined directly in the general case.

\begin{defn}\label{def:rand.adjoint}
For a (s.o.) random operator $C$ from $G$ to $H$,
i.e. $C \in \Rs^p (\Omega; G, H)$, the \emph{(s.o. if $p=2$) random adjoint} $C^\bullet$ will be defined by $C^\bullet y = f_y$, for $y \in D(C^\bullet) \subset H$, where
\begin{multline*}
D(C^\bullet):\ = \bigl\{ y \in H :\ \exists f_y \in L^p (\wp, G) \text{ s.t. for each } \ x \in D(C) \\ \scal{(Cx) (\omega)}{y}_H = \scal{x}{f_y(\omega)}_G \text{ a.e.} \bigr\}.
\end{multline*}
\end{defn}
Let us note that the separability of the Hilbert spaces $G$ and $H$ and the density of $D(C)$ in $G$ ensures that $C^\bullet$ is well defined.

The fundamental properties of the random adjoint which show some analogies with the natural adjoint are collected in
\begin{prop}\label{prop:propr.bullet}
If $A$ and $B$ are (s.o.) random operators from $G$ to $H$ then:
\begin{enumerate}[(i)]
\item $A^\bullet$ is a closed operator from $H$ into $L^p (\wp , G)$;
\item when $A^\bullet$ is a (s.o.) random operator from $H$ to $G$ (i.e. $D(A^\bullet)$ is dense in $H$) then \begin{itemize}
    \item[(ii.a)] $(A^\bullet)^\bullet =\ : A^{\bullet \bullet}$ makes sense, is again a (s.o.) random operator and extends $A$ ($A \subseteqq A^{\bullet \bullet}$);
    \item[(ii.b)]  $A$ is closable;
    \item[(ii.c)] $(\bar A)^\bullet = A^\bullet$ and $\bar A = A^{\bullet\bullet}$;
    \item[(ii.d)] $A$ is closed iff $A= A^{\bullet\bullet}$;
    \end{itemize}
\item $A\subseteqq B$ implies $B^\bullet \subseteqq A^\bullet$;
\item $(\lambda A )^\bullet = \bar \lambda A^\bullet , \ \lambda \in {\mathbb C}$ ;
\item when $A+B$ is a random operator (i.e. $ D(A+B) :\ = D(A) \cap D(B)$ is dense in $G$), then $(A+B)^\bullet \supseteqq A^\bullet + B^\bullet$;
\end{enumerate}
\end{prop}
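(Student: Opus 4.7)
The random adjoint is governed by the asymmetric pointwise pairing
\[
\scal{(Ax)(\omega)}{y}_H = \scal{x}{(A^\bullet y)(\omega)}_G \quad \text{a.e.,}
\]
valid for $x \in D(A)$, $y \in D(A^\bullet)$. This is the random analogue of the classical adjoint identity, and the plan is to transplant the usual Hilbert-space adjoint arguments into this setting, keeping track of the fact that $A^\bullet$ takes values in the $L^p$-space (not in $G$), so that all limit passages must translate convergence in $L^p$ into a.e.\ convergence of the pointwise pairings. This translation is available for both $p=0$ and $p=2$ via convergence in measure and subsequential a.e.\ extraction, and a single countable intersection of $\wp$-null sets handles the bookkeeping.

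Parts (iii), (iv), (v) are immediate from the definition: (iii) is obtained by restricting the universal quantifier in $x$ to the smaller domain $D(A) \subset D(B)$; (iv) by multiplying the defining identity by $\lambda$ and absorbing the conjugate into the right-hand inner product; (v) by adding the identities for $A$ and $B$ on $D(A)\cap D(B)$. For (i) I would take $y_n \in D(A^\bullet)$ with $y_n \to y$ in $H$ and $A^\bullet y_n \to f$ in $L^p(\wp,G)$, fix $x \in D(A)$, and intersect the countably many $\wp$-null exceptional sets (one per $n$) to obtain a full-measure set on which every defining identity holds. The left-hand side $\scal{(Ax)(\omega)}{y_n}_H$ converges a.e.\ to $\scal{(Ax)(\omega)}{y}_H$; the right-hand side $\scal{x}{(A^\bullet y_n)(\omega)}_G$ converges in measure to $\scal{x}{f(\omega)}_G$, and an a.e.-convergent subsequence allows me to pass to the limit in the identity, yielding $y\in D(A^\bullet)$ with $A^\bullet y = f$.

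For (ii.a), complex conjugating the defining identity of $A^\bullet$ gives $\scal{(A^\bullet y)(\omega)}{x}_G = \scal{y}{(Ax)(\omega)}_H$ for every $y \in D(A^\bullet)$, which, since $D(A^\bullet)$ is by hypothesis dense in $H$, is exactly the condition that $x \in D(A^{\bullet\bullet})$ with $A^{\bullet\bullet}x = Ax$; hence $A \subseteqq A^{\bullet\bullet}$. Part (ii.b) is then automatic: (i) applied to the random operator $A^\bullet \in \Rs^p(\Omega;H,G)$ says that $A^{\bullet\bullet}$ is closed, so $A$ has a closed extension and is closable with $\overline{A} \subseteqq A^{\bullet\bullet}$. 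For the first half of (ii.c), the inclusion $(\overline{A})^\bullet \subseteqq A^\bullet$ is (iii) applied to $A \subseteqq \overline{A}$; the reverse inclusion follows by fixing $y \in D(A^\bullet)$ and, for $x \in D(\overline{A})$ approximated by $x_n \in D(A)$ with $Ax_n \to \overline{A}x$ in $L^p$, intersecting the countably many null sets and passing along an a.e.-convergent subsequence in each identity $\scal{(Ax_n)(\omega)}{y} = \scal{x_n}{(A^\bullet y)(\omega)}$.

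The serious step is the equality $\overline{A} = A^{\bullet\bullet}$. One direction, $\overline{A}\subseteqq A^{\bullet\bullet}$, is already in hand; the reverse inclusion $A^{\bullet\bullet}\subseteqq \overline{A}$ is the main obstacle, because the classical graph/orthogonal-complement proof of $T = T^{**}$ does not apply verbatim: the graph of $A$ lies in $G\times L^p(\wp,H)$ whereas the graph of $A^\bullet$ lies in $H\times L^p(\wp,G)$, so the pairing between the two is asymmetric. My plan is to pass through the minimal decomposable extensions $\bA$ of $\overline{A}$ and $\bA^\bullet$ of $A^\bullet$ (whose existence and properties are the subject of Section~2), to invoke the classical Hilbert-adjoint identity $\bA = (\bA^\bullet)^*$ on $L^p(\wp,G)\leftrightarrow L^p(\wp,H)$ (with the obvious modification for $p=0$), and then to restrict back to deterministic vectors to transfer the equality to the random operators. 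Part (ii.d) then follows immediately: if $A$ is closed, $A = \overline{A} = A^{\bullet\bullet}$, and conversely $A = A^{\bullet\bullet}$ is closed by (i) applied to $A^\bullet$.
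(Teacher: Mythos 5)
Your treatment of (iii), (iv) and (v) coincides with the paper's: the paper likewise derives them directly from the defining pairing, restricting the quantifier over $x$ for (iii), reading (iv) off the definition, and adding the identities on $D(A)\cap D(B)$ for (v). For (i) and (ii) the paper offers no proof at all, referring to \cite{Hack4} and \cite{Hack5}, so there you go beyond the text. Your arguments for (i), (ii.a), (ii.b), the identity $(\overline{A})^\bullet = A^\bullet$, and (ii.d) are sound: a single subsequence extraction from $L^p$-convergence (convergence in measure when $p=0$) gives a.e.\ convergence of the $G$-valued functions simultaneously for all test vectors $x$, and since the exceptional null set in the definition of $D(A^\bullet)$ is permitted to depend on $x$, the countable intersections you perform suffice.

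The one genuine gap is the step you yourself flag as serious, namely $A^{\bullet\bullet}\subseteqq \overline{A}$ in (ii.c). What you offer there is a plan, not a proof, and the plan has two problems. First, it invokes the minimal decomposable extension $\bA$ and the identity $\bA=(\bA^\bullet)^*$, but in this paper that machinery (Theorems \ref{thm:car.rnd.adj} and \ref{thm:propr.rnd.op}) is developed \emph{after} Proposition \ref{prop:propr.bullet}, and the footnote to Theorem \ref{thm:car.rnd.adj} states that $\bA$ is itself \emph{constructed} as the closure of a restriction of the random adjoint of $A^\bullet$, i.e.\ of $A^{\bullet\bullet}$; so your route risks circularity unless you rebuild that construction from scratch. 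Second, ``restricting back to deterministic vectors'' is not automatic: from $x\in D(A^{\bullet\bullet})$ you must conclude $x\in D(\overline{A})$, which requires identifying the constant vectors lying in the domain of $(\bA^\bullet)^*$ with $D(\overline{A})$ --- a nontrivial domain statement that is essentially the content of the core assertion \eqref{eq:comm.core} and of Theorem \ref{thm:propr.rnd.op}(ii), not a formal consequence of the Hilbert-space adjoint identity. The paper sidesteps all of this by citing Hackenbroch, where $A^{\bullet\bullet}$ and the decomposable extension are built together; a self-contained argument would have to reproduce that construction before the transfer you describe can be carried out.
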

\begin{proof}
  For (i) and (ii) see \cite{Hack4} or \cite{Hack5}.

  (iii). Let $y \in D(B^\bullet) (\subset H)$. Then, by using $A\subseteqq B$, the following implication holds:
  \begin{multline*}
    \langle (Bx)(\omega), y \rangle_H = \langle x ,(B^\bullet y) (\omega) \rangle_G ,\ x\in D(B),\ \omega - \text{a.e.} \ \Rightarrow \\
    \langle (Ax)(\omega), y \rangle_H = \langle x ,(B^\bullet y) (\omega) \rangle_G ,\ x \in D(A) ,\ \omega - \text{a.e.}
  \end{multline*}
  This leads to $y \in D(A^\bullet)$ and the density of $D(A)$ in $G$ implies $(B^\bullet y )(\omega) = (A^\bullet y) (\omega) \ \omega - \text{ a.e.}$, i.e. $B^\bullet \subseteqq A^\bullet$.

  (iv) results directly from the definition of the random adjoint.

  (v) Let $y\in D(A^\bullet + B^\bullet) :\ = D(A^\bullet) \cap D(B^\bullet)$. Then for each $x\in D(A+B)$ we have:
  \begin{align*}
    \langle (A+B)x (\omega) , y \rangle_H & = \langle Ax (\omega), y \rangle_H + \langle Bx (\omega), y \rangle_H \\
     &  = \langle x, (A^\bullet y) (\omega)\rangle_G + \langle x, (B^\bullet y) (\omega)\rangle_G \\
     & = \langle x, (A^\bullet + B^\bullet) y (\omega) \rangle_G , \ \omega - \text{ a.e.}
  \end{align*}
  which by the hypothesis of (v), implies that $y \in D((A+B)^\bullet)$ and $(A+B)^\bullet y = A^\bullet y + B^\bullet y$, from where $(A+B)^\bullet \supseteqq A^\bullet + B^\bullet.$
\end{proof}

We add now some simple facts of the random adjoint for the case of continuous random operators.

First it is not difficult to see, that the following results proven for $p=0$ in Theorem 3.10 of \cite{ThangAdjCompRndOp95} and Lemma 2 of \cite{Thang4}, hold for $p=2$ as well.

\begin{rem} \label{rem:Skorohod.adjoint}
Each continuous random operator from the class $\Ls^p(\Omega; G, H)$ posses a random adjoint.
Moreover, when $A_{\mathbf a} \in \Ls^p(\Omega; G, H)$ is the continuous random  operator associated to a family ${\mathbf a} = \{ a(\omega)\}_{\omega\in\Omega}$ from $\Bc(G, H)$ as in \eqref{eq:def.rnd.op}, then $A_{\mathbf a}^\bullet$ is a Skorohod operator from the class $\Ls^p (\Omega; H, G)$ and $A_{\mathbf a}^\bullet = A_{{\mathbf a}^*}$, where ${\mathbf a}^* = \{ a(\omega)^*\}_{\omega \in\Omega}$ is the family from $\Bc(H, G)$ of the corresponding classical adjoints.
\end{rem}

Having in view Remark \ref{rem:trei}, we are now led to

\begin{rem}
The Hilbert Schmidt subclass of s.o. Skrohod operators is ``invariant'' to the random adjoint, in the sense that
\begin{equation}\label{eq:invarianta.Hilb.Schm.la.punct}
A\in \Cs_2(G, L^2 (\wp, H)) \Rightarrow A^\bullet \in \Cs_2 (H , L^2 (\wp , G)).
\end{equation}
Moreover, by \ref{prop:propr.bullet} (ii.d) the inverse implication in \eqref{eq:invarianta.Hilb.Schm.la.punct} holds.
\end{rem}
The following example, important in itself, illustrates the difference between the random adjoint and the classical adjoint.

\begin{ex}\label{exmp:doi}
  The embedding operator of $H$ into $L^2 (\wp , H)$, denoted in the following by $J_H$ is a s.o. random operator, which is decomposable by the constant family $\{ 1_H \}_{\omega \in \Omega}$, where $1_H$ represents the identity operator on $H$. Therefore $J_H \in \Ls^2 (\Omega ; H)$. By Remark \ref{rem:Skorohod.adjoint} since $1_H^* = 1_H$, we have that $J_H^\bullet = J_H$. On the other side, trying to compute the classical adjoint of $J_H$ the following simple calculation, for all $y \in H$, and $g \in L^2 (\wp , H)$,
  \[ \langle y , J_H^* g \rangle_H = \langle J_H y , g \rangle_{L^2(\wp , H)} = \int \langle y , g(\omega) \rangle_H d\wp (\omega) = \langle y , \int g(\omega) d \wp (\omega)\rangle_H , \]
  leads to the fact that $J_H^* = {\mathbb E}_H$, where ${\mathbb E}_H$ is the expectation operator on $L^2 (\wp , H)$. Thus, 
  the random adjoint of the second order random operator $J_H$ (which is $J_H$) differs essentially from its classical adjoint (which is ${\mathbb E}_H$).
\end{ex}



Now, returning to the random adjoint in the unbounded framework, its role in the existence of decomposable extensions will be described in the following two theorems (see \cite{Schmud}, \cite{Hack4}, \cite{Hack5}).

\begin{thm} \label{thm:car.rnd.adj}
Let $A \in \Rs^0 (\Omega; G, H)$ be such that $A^\bullet \in \Rs^0 (\Omega; H, G)$. Then there is a uniquely determined minimal decomposable extension of ```Hilbert type'' $\bA$ \footnote{Actually $\bA$ is the closure of the restriction to the domain from \eqref{eq:comm.core} of the random adjoint of $A^\bullet$ regarded as second order random operator (see Lemma and Theorem 1 from \cite{Hack5}).} of $A$,
such that
    $\bA (\varphi x) = \varphi A x$, $\varphi \in L^\infty (\wp), x\in D(A) \text{ s.t. } \varphi Ax \in L^2(\wp, H) $.\\
    Moreover,
     \begin{multline}\label{eq:comm.core}
      C(\bA) = \Bigl\{ \sum\limits_{j=1}^{m} \varphi_j x_j ,\text{ with } \varphi_j \in L^\infty (\wp) \text{ and }  x_j \in D(A),\text{ s.t. } \\ \varphi_j A x_j \in L^2 (\wp, H),
       j=1,2,\dots , m ;\ m \in {\mathbb N}  \Bigr\}
     \end{multline}
      is a core for $\bA$.
\end{thm}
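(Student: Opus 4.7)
The plan is to construct $\bA$ by closing the natural pre-extension on the candidate core \eqref{eq:comm.core}; the hypothesis $A^\bullet \in \Rs^0(\Omega; H, G)$ will be used exactly once, to supply a density argument that underwrites both well-definedness and closability. I would first define the linear map
$$\bA_0 : C(\bA) \longrightarrow L^2(\wp, H), \qquad \bA_0\Bigl(\sum_{j=1}^m \varphi_j x_j\Bigr) := \sum_{j=1}^m \varphi_j Ax_j,$$
where $C(\bA)$ is the subspace in \eqref{eq:comm.core}. The $L^2$-membership of each summand is built into $C(\bA)$, so the image lies in $L^2(\wp,H)$, and $C(\bA)$ is dense in $L^2(\wp,G)$ because any $\varphi x \in L^\infty(\wp)\otimes D(A)$ is approximated in $L^2$ by the cutoffs $\chi_{\{\|Ax\|_H\le n\}}\varphi x \in C(\bA)$, while $L^\infty(\wp)\otimes D(A)$ is itself dense in $L^2(\wp,G)$ by density of $D(A)$ in $G$.

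Both well-definedness and closability reduce to a single computation: for $y\in D(A^\bullet)$, $\psi\in L^\infty(\wp)$ with $\psi A^\bullet y \in L^2(\wp,G)$, and $f=\sum_j \varphi_j x_j \in C(\bA)$, the defining identity of $A^\bullet$, applied pointwise a.e., yields
$$\langle \bA_0 f,\,\psi y\rangle_{L^2(\wp,H)} = \int \overline{\psi(\omega)}\,\bigl\langle f(\omega),(A^\bullet y)(\omega)\bigr\rangle_G\, d\wp(\omega) = \langle f,\, \psi A^\bullet y\rangle_{L^2(\wp,G)}.$$
If $f=0$ in $L^2(\wp,G)$ the right-hand side vanishes; if $f_n\to 0$ and $\bA_0 f_n\to g$ then in the limit $\langle g,\psi y\rangle = 0$. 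In either case one concludes by density in $L^2(\wp,H)$ of the test family $\{\psi y : y\in D(A^\bullet),\ \psi\in L^\infty(\wp),\ \psi A^\bullet y \in L^2(\wp,G)\}$: for fixed $y$, the cutoffs $\psi_n := \chi_{\{\|A^\bullet y\|_G\le n\}}$ satisfy $\psi_n A^\bullet y \in L^\infty(\wp,G)\subset L^2(\wp,G)$ and $\psi_n y \to y$ in $L^2(\wp,H)$, so every element of $L^\infty(\wp)\otimes D(A^\bullet)$ lies in the closure; density of $D(A^\bullet)$ in $H$ then finishes the job.

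I would then set $\bA := \overline{\bA_0}$. By construction $C(\bA)$ is a core for $\bA$, the identity $\bA(\varphi x) = \varphi Ax$ holds on it, and taking $m=1$, $\varphi_1\equiv 1$ gives $A\subset \bA$. For decomposability in the sense of Definition \ref{def:decompos.closed.op}, fix $\varphi\in L^\infty(\wp)$ and $f\in D(\bA)$ with approximants $f_n\in C(\bA)$; since $|\varphi|\le \|\varphi\|_\infty$, each $\varphi f_n$ remains in $C(\bA)$ with $\bA_0(\varphi f_n) = \varphi \bA_0 f_n$, and closedness of $\bA$ gives $\varphi f \in D(\bA)$ with $\bA(\varphi f) = \varphi \bA f$. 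Minimality and uniqueness follow together: any decomposable Hilbert-type extension $\bA'$ of $A$ satisfies $\bA'|_{C(\bA)} = \bA_0$ by Definition \ref{def:extension}, and closedness of $\bA'$ forces $\bA' \supset \overline{\bA_0} = \bA$.

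The only genuinely delicate step will be the density in $L^2(\wp,H)$ of the test set $\{\psi y\}$ above; every other step is a routine closure-of-graph argument. The footnote's alternative route—realising $\bA$ as the closure of the restriction to $C(\bA)$ of the random adjoint of $A^\bullet$ regarded as a second-order random operator—repackages the same density through the random-adjoint formalism rather than avoiding it, and the two constructions must coincide by the minimality argument.
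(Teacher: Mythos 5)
Your proposal is essentially correct, but note that the paper itself offers no proof of this theorem: it is imported wholesale from \cite{Hack5} (and \cite{Hack4}, \cite{Schmud}), with only the footnote indicating the construction used there, namely realising $\bA$ via the random adjoint of $A^\bullet$. Your direct route --- define $\bA_0$ on $C(\bA)$, prove well-definedness and closability simultaneously by pairing against the test family $\{\psi y\}$, and take the closure --- is sound and self-contained: the pointwise identity $\langle (Ax_j)(\omega),y\rangle_H=\langle x_j,(A^\bullet y)(\omega)\rangle_G$ does give $\langle \bA_0 f,\psi y\rangle_{L^2(\wp,H)}=\langle f,\psi A^\bullet y\rangle_{L^2(\wp,G)}$ (the integrands are dominated by products of $L^2$ functions, so the interchange is legitimate), and your cutoff argument correctly reduces totality of the test family to density of $D(A^\bullet)$ in $H$, which is exactly where the hypothesis $A^\bullet\in\Rs^0(\Omega;H,G)$ enters. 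The verification of the intertwining relation $\bA m^G_\varphi\supset m^H_\varphi\bA$ via approximants in $C(\bA)$, and the minimality/uniqueness argument from closedness of any competing extension, are both fine. Two small points deserve care. First, ``taking $m=1$, $\varphi_1\equiv 1$ gives $A\subset\bA$'' is imprecise: since $A$ takes values in $L^0(\wp,H)$ while $\bA$ maps into $L^2(\wp,H)$, what you actually obtain is $\bA\supset A|_{D^2(A)}$ where $D^2(A)=\{x\in D(A):Ax\in L^2(\wp,H)\}$; this is the correct meaning of a ``Hilbert type'' extension in the paper's sense, but it should be said. Second, your closing observation is accurate: the footnote's construction through the random adjoint of $A^\bullet$ encodes the same density input, so the two constructions agree by your minimality argument; your version has the advantage of making the role of the hypothesis on $A^\bullet$ completely explicit.
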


The properties of the random adjoint, of the decomposable extensions (see \cite{Hack5}) and of their connection can be reformulated as follows
\begin{thm}\label{thm:propr.rnd.op}
Let $A \in \Rs^0 (\Omega; G, H)$ be such that the restriction $A|_{D^2(A)} \in \Rs^2 (\Omega; G, H)$, where $D^2 (A) :\ = \{ x \in D(A), \ \text{ s.t. }\ A x \in L^2 (\wp, H)\}$.
Then the following assertions hold
\begin{itemize}
\item[(i)] $A^\bullet \in \Rs^0 (\Omega; H, G)$, $\bigl(A|_{D^2 (A)}\bigr)^\bullet \in \Rs^2(\Omega; H, G)$;
\item[(ii)] $A$ has a uniquely determined minimal decomposable extension $\bA = \bA_{\mathbf a}$ by a $2$-summable field ${\mathbf a} = \{ a(\omega) \}_{\omega \in \Omega}$ of unbounded operators from $G$ to $H$, for which $D^2(A)$ is a common core;
\item[(iii)] $\bA_{\mathbf a}$ and almost all $a(\omega), \omega \in\Omega$ are densely defined;
\item[(iv)] $\{ a(\omega)^*\}_{\omega \in\Omega} =\ : {\mathbf a}^*$ is a $2$ - summable field of unbounded operators from $H$ to $G$, $\bA_{{\mathbf a}^*} = (\bA_{\mathbf a})^*$ and it is a decomposable extension of $A^\bullet$, consequently it also extends the minimal decomposable extension of $A^\bullet$;
\item[(v)] when $G = H$ and $A$ is a symmetric random operator, then its decomposable extension $\bA_{\mathbf a}$ is a symmetric operator on $L^2(\wp, H)$ in the usual sense.
\end{itemize}
\end{thm}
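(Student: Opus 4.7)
The plan is to dispatch the five assertions in sequence, leaning on Theorem \ref{thm:car.rnd.adj}, Proposition \ref{prop:decompos.clsd.op}, and the measurable-field machinery from \cite{Schmud}. For (i), set $B := A|_{D^2(A)} \in \Rs^2(\Omega; G, H)$ and first derive $B^\bullet \in \Rs^2(\Omega; H, G)$: the random adjoint in the second-order setting, as developed in \cite{Hack5} and \cite{Thang4}, is built via the Hilbert-space adjoint $B^*: L^2(\wp, H) \supseteq D(B^*) \to G$ tested on constant functions $y \in H$, producing the required $L^2$-valued $f_y$ with $\langle (Ax)(\omega), y\rangle_H = \langle x, f_y(\omega)\rangle_G$ a.e.\ for $x \in D^2(A)$. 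The inclusion $B \subseteq A$ then gives $A^\bullet \subseteq B^\bullet$ by Proposition \ref{prop:propr.bullet}(iii); to pass back to $A^\bullet$ and obtain density of $D(A^\bullet)$ in $H$, I would approximate $x \in D(A) \setminus D^2(A)$ by level-set cutoffs of the form $\chi_{\{\|(Ax)(\omega)\| \leq n\}}$, bringing the defining identity into the $L^2$-regime and then removing the cutoff via convergence in measure of $(Ax)(\omega)$, which forces $D(A^\bullet) = D(B^\bullet)$ and hence $A^\bullet \in \Rs^0(\Omega; H, G)$.

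Given (i), Theorem \ref{thm:car.rnd.adj} supplies the uniquely determined minimal Hilbert-type decomposable extension $\bA$ of $A$ with core $C(\bA)$ as in \eqref{eq:comm.core}, and Proposition \ref{prop:decompos.clsd.op} identifies it as $\bA_{\mathbf a}$ for a unique 2-summable field ${\mathbf a} = \{a(\omega)\}$, giving (ii). That $D^2(A)$ is a common core for ${\mathbf a}$ is extracted from the measurable-selection construction in \cite[Ch.\ 12]{Schmud}: the totalizing sequence $(f_n(\omega), g_n(\omega))$ for each $\Gc_{a(\omega)}$ can be selected from constants $x_n \in D^2(A)$, since $(Ax_n)(\omega) = a(\omega) x_n$ a.e., and closedness of $a(\omega)$ combined with density of $D^2(A)$ in $G$ then forces $D^2(A)$ to be a core for $a(\omega)$ almost surely. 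Item (iii) is immediate: $\bA_{\mathbf a}$ is densely defined since $C(\bA)$ is dense in $L^2(\wp, G)$, and almost every $a(\omega)$ is densely defined because $D^2(A)$ is dense in $G$.

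For (iv), Schm\"udgen's pointwise-duality theorem for 2-summable fields (\cite[Ch.\ 12]{Schmud}) yields that ${\mathbf a}^* := \{a(\omega)^*\}$ is again a 2-summable field and that $\bA_{{\mathbf a}^*} = (\bA_{\mathbf a})^*$ as Hilbert-space adjoints between $L^2(\wp, H)$ and $L^2(\wp, G)$. To see that $\bA_{{\mathbf a}^*}$ extends $A^\bullet$, take $y \in D(A^\bullet)$ viewed as a constant in $L^2(\wp, H)$: the identity $\langle (Ax)(\omega), y\rangle_H = \langle x, (A^\bullet y)(\omega)\rangle_G$ a.e.\ for $x \in D^2(A)$ forces $y \in D(\bA_{{\mathbf a}^*})$ with $\bA_{{\mathbf a}^*} y = A^\bullet y$, precisely because $D^2(A)$ is a common core. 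Being decomposable, $\bA_{{\mathbf a}^*}$ must then contain the minimal decomposable extension of $A^\bullet$. For (v), the symmetry $A \subseteq A^\bullet$ together with (ii) and (iv) forces $\bA_{\mathbf a} \subseteq \bA_{{\mathbf a}^*} = (\bA_{\mathbf a})^*$, exhibiting $\bA_{\mathbf a}$ as symmetric in the usual sense on $L^2(\wp, H)$.

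The main obstacle I expect is item (i), specifically the extension of the adjoint identity from $D^2(A)$ to $D(A)$ in the $L^0$-topology: while the level-set truncation is natural, ensuring that the resulting $f_y \in L^2(\wp, G)$ indeed remains unchanged (rather than being replaced by a strictly larger $L^0$-valued function) requires a careful interplay between convergence in measure of $(Ax)(\omega)$ and the fixed nature of $f_y$. A secondary technical point is the measurable selection used in (ii) to assemble a common totalizing sequence for $\{a(\omega)\}$ from $D^2(A)$; the remaining items (iii)--(v) then follow quickly once (i) and (ii) are secured.
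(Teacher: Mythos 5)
The paper offers no proof of this theorem: the sentence preceding it says these properties are ``reformulated'' from \cite{Hack5} (and \cite{Schmud}), and no argument is supplied. So your proposal cannot be matched against an in-paper proof and must stand on its own; as written, item (i) --- on which your whole plan rests --- has a genuine gap. Density of $D\bigl((A|_{D^2(A)})^\bullet\bigr)$ in $H$ is the entire content of (i), and it is not obtained by ``testing the Hilbert-space adjoint on constants'': for $y\in H\subset L^2(\wp,H)$ the adjoint gives only the integrated identity $\scal{Bx}{y}_{L^2(\wp,H)}=\scal{x}{B^*y}_G$ with $B^*y$ a single vector of $G$, whereas membership in $D(B^\bullet)$ requires the pointwise a.e.\ identity with an $L^2(\wp,G)$-valued density $f_y$; manufacturing $f_y$ (via the vector measure $\alpha\mapsto B^*(\chi_\alpha y)$) and proving the resulting domain dense is precisely the substance of Hackenbroch's theorem, not a formality. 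Worse, the truncation step fails: $\chi_{\{\|(Ax)(\omega)\|\le n\}}Ax$ is not of the form $Ax'$ for any $x'\in D^2(A)$, so the identity known on $D^2(A)$ gives no information about it, and approximating $x\in D(A)$ by elements of $D^2(A)$ does not help because $A$ is not continuous. Note also that the theorem never claims $D(A^\bullet)=D\bigl((A|_{D^2(A)})^\bullet\bigr)$. The workable order is the reverse of yours: first build the minimal decomposable extension and the field ${\mathbf a}$ (items (ii)--(iii)), observe that each closed densely defined $a(\omega)$ has a densely defined adjoint, invoke Schm\"udgen's duality $(\bA_{\mathbf a})^*=\bA_{{\mathbf a}^*}$, and only then deduce density of $D(A^\bullet)$ from $D(A^\bullet)\supseteq\{y:\ y\in D(a(\omega)^*)\ \text{a.e., } a(\cdot)^*y\in L^0(\wp,G)\}$ together with the pointwise form $(Ax)(\omega)=a(\omega)x$ valid for all $x\in D(A)$ by Remark \ref{rem:2.5}. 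Since your (ii) is deduced from your (i) via Theorem \ref{thm:car.rnd.adj}, your plan is also circular at exactly this joint.

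A second, smaller gap: in (ii) you assert that closedness of $a(\omega)$ plus density of $D^2(A)$ in $G$ ``forces $D^2(A)$ to be a core for $a(\omega)$ almost surely.'' It does not --- a core must be dense in the graph norm of $a(\omega)$, not merely in $G$ (any closed proper extension of the closure of a restriction is a counterexample). What actually yields the common-core statement is that $C(\bA)$ from \eqref{eq:comm.core} is a core for $\bA_{\mathbf a}$ and this graph-density disintegrates over $\omega$. Items (iii), (iv) and (v) are essentially sound once (ii) and the duality $\bA_{{\mathbf a}^*}=(\bA_{\mathbf a})^*$ are secured, with the caveat in (iv) that for $y\in D(A^\bullet)$ one only has $A^\bullet y\in L^0(\wp,G)$, so the extension of $A^\bullet$ by $\bA_{{\mathbf a}^*}$ must be understood in the ``Hilbert type'' sense; your item (v) is the clean part of the argument.
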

\begin{rem} \label{rem:doi.patru}
It is obvious that the adjoint of an s.o. random operator is again an s.o. random operator and then the statements (ii) - (v) from above are fulfilled for such operators.
\end{rem}
Having in view the connections of (s.o.) random operators with its decomposable extensions (if they exist), in the unbounded framework the analogue of the above mentioned class $\Ls^p (\Omega ; G, H)$ will be defined as follows.
\begin{defn}\label{def:decompos.rnd.op}
A \emph{(s.o.) random operator} $A$ from $G$ to $H$ will be called \emph{decomposable} (denoted $A \in \Ds^p (\Omega; G, H)$) 
, if there exists a measurable ($2$-summable) field ${\mathbf a} = \{a(\omega)\}_{\omega \in \Omega},\ a(\omega) \in \Cc\Lc (G, H)$, such that the operator of pointwise application  $\bA_{{\mathbf a}}$ associated to the family ${\mathbf a}$ is the minimal decomposable extension of $A$.
\end{defn}
In what follows, if no danger of confusion exists,
we say in this case that $A$ is \emph{decomposable by the measurable ($2$-summable) field ${\mathbf a}$} and denote that by $A = A_{\mathbf a}$\footnote{Sometimes it is said that the field $\{a(\omega)\}_{\omega \in \Omega}$ acts as the (s.o.) random operator $A_{\mathbf a}$ by the above pointwise application \eqref{eq:def.rnd.op} (see \cite{Hack4} Sec. 1 and \cite{Hack5} Sec. 2)}.
\begin{rem}\label{rem:2.5}
When the (s.o.) random operator $A$ from $G$ to $H$ is decomposable by the (s.o.) measurable field ${\mathbf a}$, then it can be expressed in the pointwise form
\[
(Ax)(\omega) = a(\omega) x,\quad x\in D(A), \omega \in \Omega.
\]
Indeed, it is obvious that when $x\in D(A)$, then $x= 1\otimes x \in L^q (\wp) \otimes D(A) \subset D(\bA_{{\mathbf a}})$, which by Definition \ref{def:extension} means $1 Ax =  \bA_{{\mathbf a}} x$, from where the desired relation holds.
\end{rem}

Remarking that the decomposable random operators are closed let us mention the
following inclusion diagram
\begin{equation*}
\begin{array}{ccc}
  \Rs^p (\Omega;G,H) &\supset& \Ss^p(\Omega;G,H) \\
  \cup & & \cup \\
  \Ds^p (\Omega;G,H) &\supset & \Ls^p(\Omega;G,H).
\end{array}
\end{equation*}

\section{Random normal operators}

Random selfadjointness and even random normality as well as the random spectral theorems were treated in \cite{Thang4}, but only in the continuous framework.

For the moment we give the definitions of selfadjoint random operators (for which we consider of course $G = H$) (see \cite{Hack4}, \cite{Hack5}).

\begin{defn}
When $C$ is a (s.o.) random operator and $C^\bullet$ extends $C$, we shall say that $C$ is a \emph{symmetric (s.o.) random operator} on $H$. In this case $C^\bullet$ is also densely defined and consequently a (s.o.) random operator. When, moreover $C = C^\bullet$, then $C$ will be called a \emph{self-adjoint (s.o.) random operator} on $H$.
\end{defn}




Further, in order to introduce the random normality it is necessary to define a composition \footnote{Another way to define the composition of a pair of (continuous) random operators can be found in \cite{ThangAdjCompRndOp95} Section 4.} for the decomposable random operators (i.e. from the classes $\Ds^p, \ p = 0, 2$). Namely

\begin{defn}
Let $A \in \Ds^p (\Omega; G, H)$, $B \in \Ds^p (\Omega; H, K)$ 
and $\bB$ the decomposable extension of $B$. 
Then the composition $BA$ will be defined by
\begin{equation}\label{eq:def.comp.rnd.op}
(BA) (x) :\ = \bB (Ax) , \text{ for } x \in D(BA),
\end{equation}
where $D (BA) = \{ x \in D(A) \text{ such that } Ax \in D(\bB)\}$.
\end{defn}


For the case of continuous random operators we immediately have
\begin{rem} \label{rem:Skor.compos}
If $A \in \Ls^p (\Omega; G, H)$ and $B \in \Ls^p(\Omega; H, K)$, the definition \eqref{eq:def.comp.rnd.op} works without restriction for any $x \in G$. Also $B A \in \Ls^p (\Omega; G, K),\ p = 0, 2$. Moreover, when $A$ and $B$ are decomposable by the measurable ($2$-summable) fields of operators $\{ a(\omega) \}_{\omega\in\Omega} ( \subset \Bc(G, H))$ and $\{ b(\omega) \}_{\omega\in\Omega} ( \subset \Bc(H, K))$, respectively, then the product $BA$ is the continuous random operator decomposable by the measurable ($2$-summable) field $\{ b(\omega) a(\omega) \}_{\omega \in \Omega} ( \subset \Bc(G, K)) $ (see also \cite{ThangAdjCompRndOp95}, \cite{Thang4}).
\end{rem}

\begin{rem}
We point out that, when $G=H=K$, then we can conclude that the linear space $\Ls^p (\Omega;H)$ endowed with the above random composition of operators as multiplication, with the random adjoint as involution and with the topology of pointwise convergence of operators is a topological $*$-algebra contained in $\Bc(H, L^p(\wp, H))$.
\end{rem}

For the case of general random operators it also holds
\begin{thm} \label{thm:comp.rnd.op.gen}
Let $A\in \Ds^p(\Omega; G,H),\ B\in\Ds^p(\Omega; H,K)$ be such that the two measurable ($2$-summable) fields of operators ${\mathbf a} = \{ a(\omega)\}_{\omega\in\Omega}$ and ${\mathbf b} = \{ b(\omega)\}_{\omega\in\Omega}$ to which $A$, respectively $B$ are associated (i.e. $A=A_{\mathbf a},\ B = B_{\mathbf b}$) satisfy the condition that $b(\omega) a(\omega) \in \Cc\Lc_d (G, K)$ for almost all $\omega \in \Omega$. Then $BA \in \Ds^p (\Omega; G, K)$ ($p= 0,2$) and it is associated to the measurable (2 - summable) family $\{b(\omega) a(\omega)\}_{\omega \in \Omega}$.
\end{thm}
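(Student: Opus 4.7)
The plan is to identify $BA$ with the random operator induced by the composite family ${\mathbf c} = \{b(\omega) a(\omega)\}_{\omega \in \Omega}$, verify that ${\mathbf c}$ is a measurable ($2$-summable) \emph{field} in the sense of Definition \ref{def:msrb.fld}, and then invoke Proposition \ref{prop:decompos.clsd.op} together with the minimality provided by Theorems \ref{thm:car.rnd.adj}/\ref{thm:propr.rnd.op} to conclude that $BA \in \Ds^p(\Omega;G,K)$ is decomposable by ${\mathbf c}$.

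First I would pin down the pointwise action of $BA$. Since $A = A_{\mathbf a}$ and $B = B_{\mathbf b}$ are decomposable, their minimal decomposable extensions are operators of pointwise application, namely $\bA_{\mathbf a}$ and $\bB_{\mathbf b}$ (Proposition \ref{prop:decompos.clsd.op}). For every $x \in D(BA) \subset D(A)$, Remark \ref{rem:2.5} gives $(Ax)(\omega) = a(\omega) x$, and the composition \eqref{eq:def.comp.rnd.op} together with the pointwise action of $\bB_{\mathbf b}$ yields $(BA x)(\omega) = \bB_{\mathbf b}(Ax)(\omega) = b(\omega) a(\omega) x$ almost everywhere. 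Thus $BA$ is, on constants, the random operator associated to the family ${\mathbf c}$ in the sense of Example \ref{exmp:unu}.

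Next I would show that ${\mathbf c}$ is a measurable ($2$-summable) field in the sense of Definition \ref{def:msrb.fld}. Closedness and dense definition of each $b(\omega) a(\omega)$ hold by the standing hypothesis $b(\omega)a(\omega) \in \Cc\Lc_d(G,K)$. For the generating sequence, I would start from generating sequences $\{f_n\}\subset L^p(\wp,G)$ and $\{g_k\}\subset L^p(\wp,H)$ for ${\mathbf a}$ and ${\mathbf b}$, and combine them via a measurable selection: the set
\[
\Gamma(\omega) = \bigl\{ (x, b(\omega)a(\omega)x)\, :\, x\in D(b(\omega)a(\omega))\bigr\}
\]
is the intersection of ${\mathcal G}_{a(\omega)}$ with the preimage of ${\mathcal G}_{b(\omega)}$ under the map $(x,y)\mapsto(y,\cdot)$, so a countable total subsequence in $\Gamma(\omega)$ is obtained by combining the $f_n$ with truncations ensuring $a(\cdot)f_n(\cdot)$ lies in the $g_k$-generated subspace, then replacing each by a sequence of scalar multiples staying in $D(b(\omega)a(\omega))$ and in $L^p(\wp,G)$. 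Density of $D(b(\omega)a(\omega))$ in the graph topology of $b(\omega)a(\omega)$ (tautological since $b(\omega)a(\omega)$ is closed on its own graph) is what turns this into a generating sequence. This is the technically delicate step; in particular, it also produces enough constant functions to conclude that $D(BA)$ is dense in $G$, so $BA \in \Rs^p(\Omega;G,K)$.

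Finally, let $\bC := \bC_{{\mathbf c}}$ denote the operator of pointwise application associated with ${\mathbf c}$. By Proposition \ref{prop:decompos.clsd.op}, $\bC$ is a closed decomposable operator from $L^p(\wp,G)$ to $L^p(\wp,K)$. The first step shows $\bC$ extends $BA$, and the second step shows $L^q(\wp)\otimes D(BA)$ (with the integrability proviso of Definition \ref{def:extension}) lies in $D(\bC)$ and is a core of $\bC$; hence $\bC$ is a decomposable extension of $BA$ in the sense of Definition \ref{def:extension}. By the uniqueness of the minimal decomposable extension in Theorem \ref{thm:car.rnd.adj} (and \ref{thm:propr.rnd.op}), $\bC$ coincides with this minimal extension, so $BA$ is decomposable by ${\mathbf c}$, that is, $BA\in\Ds^p(\Omega;G,K)$ and $BA = (BA)_{{\mathbf c}}$ as required.

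The main obstacle is undoubtedly the measurability/$2$-summability verification in the second step: the domains $D(b(\omega)a(\omega))$ can be strictly smaller than $D(a(\omega))$, so one cannot simply reuse the generating sequence of ${\mathbf a}$, and the truncation needed to keep $b(\cdot)a(\cdot)h_\ell(\cdot) \in L^p$ must be done measurably and without losing totality in the composite graphs. A secondary but related issue is producing enough constant sections $x \in G$ with $b(\cdot)a(\cdot)x \in L^p$ to make $D(BA)$ dense, which is crucial for the statement $BA\in\Ds^p$ to make sense at all.
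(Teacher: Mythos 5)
Your proposal follows essentially the same route as the paper: compute the pointwise action $(BAx)(\omega) = b(\omega)(Ax)(\omega) = b(\omega)a(\omega)x$ for $x \in D(BA)$, and then identify $BA$ as the random operator associated to the composite family ${\mathbf c} = \{b(\omega)a(\omega)\}_{\omega\in\Omega}$. The only real difference is one of emphasis: the paper's proof is essentially just your first step, after which it declares that ``by hypothesis'' ${\mathbf c}$ is a measurable ($2$-summable) field and stops (pointing the reader to Lennon's Theorem 6.3 for comparison), whereas you correctly isolate the verification that ${\mathbf c}$ is a measurable field in the sense of Definition \ref{def:msrb.fld} --- together with the density of $D(BA)$ --- as the genuinely delicate point. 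Your sketch of that verification (measurable selection, truncations of the generating sequences) is not a complete argument as written, but since the paper supplies no argument for it at all, this is a gap in the source rather than a defect of your approach; your closing appeal to the uniqueness of the minimal decomposable extension (Theorems \ref{thm:car.rnd.adj} and \ref{thm:propr.rnd.op}) to get $BA \in \Ds^p(\Omega;G,K)$ is also implicit, not explicit, in the paper.
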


\begin{rem}
Let us observe that the imposed condition on the product $b(\omega) a(\omega)$ to be in $\Cc\Lc_d (G, K)\ (\omega \in \Omega)$ can be characterized trough results from \cite{Len} (Proposition 2.2 (iv) and Proposition 4.1) given in terms of the characteristic matrices of $a(\omega)$ and $b(\omega)$ and of the bicharacteristic matrix for the product $b(\omega) a(\omega)$ (for recent studies on this topic see also \cite{AzouzMesDjell}, \cite{GustMortad}).
\end{rem}
\begin{proof}{\it of Theorem \ref{thm:comp.rnd.op.gen}}. (compare with Theorem 6.3 from \cite{Len}).
Let $x \in D(BA)$. Then $Ax \in D(\bB)$ and for almost all $ \omega \in \Omega$ we have
\[ (\bB A ) (x) (\omega) = (\bB (Ax))(\omega) = b(\omega) (Ax)(\omega) = b(\omega) a(\omega) x. \]
On the other side by hypothesis $c(\omega) = b(\omega) a(\omega)$ defines a measurable (2 - summable) field of operators from $G$ to $K$, which gives us that $BA$ is the (s.o.) random operator from $G$ to $K$ associated to ${\mathbf c} = \{ c(\omega)\}_{\omega\in\Omega}$.
\end{proof}
As in the case of ordinary adjoint (see also Proposition 1.7 of \cite{Schmud2}) we can easily complete the properties of random adjoint .
\begin{prop} \label{prop:comp.rnd.adj}
Let $A$ and $B$ be two (s.o.) random decomposable operators from $G$ to $H$, respectively from $H$ to $K$ such that $D(BA)$ is dense in $G$ and $D(B)$ is dense in $H$. Then $(BA)^\bullet \supseteqq  A^\bullet B^\bullet$. If $B$ is continuous then $(BA)^\bullet = A^\bullet B^\bullet$.
\end{prop}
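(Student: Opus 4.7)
The plan is to unfold both compositions pointwise through the associated measurable fields. Write $A = A_{\mathbf{a}}$ and $B = B_{\mathbf{b}}$; by Theorem~\ref{thm:propr.rnd.op}(iv) the adjoint fields $\mathbf{a}^{\ast}$ and $\mathbf{b}^{\ast}$ furnish decomposable extensions of $A^\bullet$ and $B^\bullet$, so Remark~\ref{rem:2.5} yields $(B^\bullet z)(\omega) = b(\omega)^{\ast} z$ a.e.\ for every $z \in D(B^\bullet)$.

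For the inclusion $(BA)^\bullet \supseteqq A^\bullet B^\bullet$, take $z \in D(A^\bullet B^\bullet)$. By definition $z \in D(B^\bullet)$ and $B^\bullet z \in D(\bA^\bullet)$, which forces $b(\omega)^{\ast} z \in D(a(\omega)^{\ast})$ a.e.\ with $(A^\bullet B^\bullet z)(\omega) = a(\omega)^{\ast} b(\omega)^{\ast} z$ a.e.; in particular $A^\bullet B^\bullet z \in L^p(\wp, G)$. For $x \in D(BA) \subseteq D(A)$ one has $a(\omega)x = (Ax)(\omega) \in D(b(\omega))$ a.e.\ (from $Ax \in D(\bB)$), so the chain
\begin{align*}
\langle (BA)x(\omega), z\rangle_K &= \langle b(\omega)a(\omega)x, z\rangle_K = \langle a(\omega)x, b(\omega)^{\ast} z\rangle_H \\
&= \langle x, a(\omega)^{\ast} b(\omega)^{\ast} z\rangle_G = \langle x, (A^\bullet B^\bullet z)(\omega)\rangle_G
\end{align*}
holds a.e., which is precisely the defining relation placing $z$ in $D((BA)^\bullet)$ and giving $(BA)^\bullet z = A^\bullet B^\bullet z$.

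For the reverse inclusion when $B$ is continuous, I would exploit that $B \in \Ls^p(\Omega; H, K)$ forces $\bB$ bounded on $L^p(\wp, H)$, $D(B^\bullet) = K$ with $(B^\bullet z)(\omega) = b(\omega)^{\ast} z$ a.e.\ (Remark~\ref{rem:Skorohod.adjoint}), and $D(BA) = D(A)$. Given $z \in D((BA)^\bullet)$ with $g = (BA)^\bullet z$, the defining identity rewrites as
\begin{equation*}
\langle a(\omega)x, b(\omega)^{\ast} z\rangle_H = \langle x, g(\omega)\rangle_G \quad \text{a.e.\ for every } x \in D(A).
\end{equation*}
Taking a countable $Q \subseteq D^2(A)$ dense in $G$ and merging the countably many null sets produces a full-measure set on which the identity holds simultaneously for all $x \in Q$; the bound $|\langle x, g(\omega)\rangle_G| \le \|x\|_G\|g(\omega)\|_G$ on $Q$ together with the common-core property of $D^2(A)$ (Theorem~\ref{thm:propr.rnd.op}(ii)) and closedness of $a(\omega)$ then upgrades this to $b(\omega)^{\ast} z \in D(a(\omega)^{\ast})$ with $a(\omega)^{\ast} b(\omega)^{\ast} z = g(\omega)$ a.e. This is the pointwise witness that $B^\bullet z \in D(\bA^\bullet)$ and $A^\bullet B^\bullet z = g = (BA)^\bullet z$, as required.

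The principal obstacle is this last core-extension step: ensuring that the a.e.\ identity on the countable set $Q$ really suffices to deduce $\|\cdot\|_G$-boundedness of $x \mapsto \langle a(\omega)x, b(\omega)^{\ast} z\rangle$ on all of $D(a(\omega))$. A cleaner alternative for $p=2$ is to lift everything to $L^2(\wp, \cdot)$, observe that $\bB\bA$ is the minimal decomposable extension of $BA$ (Theorem~\ref{thm:comp.rnd.op.gen}), characterize $z \in D((BA)^\bullet)$ by $J_K z \in D((\bB\bA)^{\ast})$ through testing against the core $L^\infty(\wp) \otimes D(BA)$, and then invoke the standard Hilbert-space identity $(\bB\bA)^{\ast} = \bA^{\ast} \bB^{\ast}$ valid whenever $\bB$ is bounded.
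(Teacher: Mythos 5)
The paper offers no proof of this proposition at all: it is stated as an easy analogue of the classical fact (Proposition 1.7 of \cite{Schmud2}), so your attempt has to be judged on its own merits rather than against an argument in the text. Your first half is correct and is surely the intended argument: unfolding $BA$ and $A^\bullet B^\bullet$ pointwise through the fields ${\mathbf a}$ and ${\mathbf b}$ and writing $\langle b(\omega)a(\omega)x, z\rangle_K = \langle a(\omega)x, b(\omega)^{*}z\rangle_H = \langle x, a(\omega)^{*}b(\omega)^{*}z\rangle_G$ produces exactly the defining relation for $z \in D((BA)^\bullet)$. The two domain facts this needs --- that $z \in D(B^\bullet)$ forces $z \in D(b(\omega)^{*})$ a.e., and that membership of $B^\bullet z$ in the domain of the decomposable extension of $A^\bullet$ forces $b(\omega)^{*}z \in D(a(\omega)^{*})$ a.e. --- you justify by Theorem \ref{thm:propr.rnd.op}(iv) and Remark \ref{rem:2.5}, which is adequate.

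The reverse inclusion is where the content lies, and the gap you flag there is genuine. After merging null sets, the identity $\langle a(\omega)x, b(\omega)^{*}z\rangle_H = \langle x, g(\omega)\rangle_G$ is available only for $x$ in a countable set $Q$ that is norm-dense in $G$; to conclude $b(\omega)^{*}z \in D(a(\omega)^{*})$ you must extend it to all of $D(a(\omega))$, and that requires $Q$ to be dense in the \emph{graph} norm of $a(\omega)$, not merely in $\|\cdot\|_G$ --- norm-density gives no control over $\|a(\omega)x\|$, so the functional $x \mapsto \langle a(\omega)x, b(\omega)^{*}z\rangle$ is not determined by its values on $Q$. Theorem \ref{thm:propr.rnd.op}(ii) does make $D^2(A)$ a common core, but it is uncountable, and the countable graph-total sequences supplied by Definition \ref{def:msrb.fld} consist of $\omega$-dependent vectors $f_n(\omega)$, for which the defining identity of $(BA)^\bullet$ (stated only for constant test vectors) is not directly available. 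The repair is the $L^2$-lifting you sketch at the end: pass to the decomposable extensions, use that $\bB$ is bounded so $(\bB\bA)^{*} = \bA^{*}\bB^{*}$, and translate back via Theorem \ref{thm:car.rnd.adj} and Theorem \ref{thm:propr.rnd.op}(iv); but as written this is a plan, not a proof, and the translation step (relating $(BA)^\bullet$ to $(\bB\bA)^{*}$ through the embedding $J_K$) is exactly where the work sits. A smaller point: $D(BA) = D(A)$ is automatic for $p=0$ but not for $p=2$, since $b(\cdot)(Ax)(\cdot)$ need not be square-integrable; this does not damage the argument, which only ever tests against $x \in D(BA)$.
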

Now, having defined the random operator composition we can (returning to the case of Skorohod operators) define the random projection operators.
\begin{defn}\label{def:rand.proj.op}
A random operator $P$ on the Hilbert space $H$ is called random projection operator, when it is continuous, decomposable, selfadjoint and idempotent (i.e. $P\in \Ls^0 (\Omega; H)$, $P = P^\bullet,\ PP = P$).
\end{defn}
\begin{rem}
  Having in view that for such random operators $P$ there is a family ${\mathbf p} :\ = \{ p(\omega)\}_{\omega \in \Omega}$ of  projection operators on $H$ such that $P = A_{{\mathbf p}}$ it results that $P$ being a contractive random operator (i.e. $\|(Px)(\omega)\| \le \|x\|,\ \omega - \text{a.e.} $) it is automatically a s.o. random operator, i.e. $P \in \Ls^2 (\Omega ; H)$.
\end{rem}

Two very simple continuous random operators on $H$ are to be mentioned here as examples of random projection operators: the null operator $O_H$ and the embedding operator $J_H$ from $H$ into $L^2 (\wp, H)$. It is simple to observe that its decomposable extensions ${\mathbf O}_H$ and ${\mathbf J}_H$ are the null operator $O_{L^2(\wp , H)}$, respectively the identity operator $I_{L^2(\wp , H)}$ on the Hilbert space $L^2(\wp , H)$. As discussed in Example \ref{exmp:doi} for $J_H$, they are decomposable by the constant field consisting of the null operator $O_H$, respectively of the identity operator $1_H$ on $H$. It is then obvious that $O_H^\bullet = O_H$ and $J_H^\bullet = J_H$, thus both being selfadjoint s.o. random operators on $H$ and consequently both are random projection operators on $H$.

The following basic properties of random projection operators (which are not difficult to verify) are stated in
\begin{prop}
Let $P$ and $Q$ be random projection operators on $H$. Then:
\begin{enumerate}[(i)]
\item When $PQ = QP$, then $PQ$ is a random projection operator as well.
\item  $PQ = O_H$, implies $QP = O_H$ and $P+Q$ is also a random projection operator. They are called orthogonal random projection operators. In particular $P$ and $J_H - P$ are orthogonal random projection operators.
\item If $P$ and $Q$ simply commute, then $P+Q - PQ$ is a random projection operator as well.
\item If $PQ = Q$, then $QP = Q$ and $P - Q$ is a random projection operator. Moreover $PQ= Q$ is equivalent to $P \ge Q$.
\item For any random projection operator $P$ it holds $P \le J_H$.
\end{enumerate}
\end{prop}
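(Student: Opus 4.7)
The plan is to reduce each of (i)--(v) to the corresponding classical statement for ordinary orthogonal projections on $H$, applied $\wp$-almost everywhere to the measurable fields that decompose $P$ and $Q$. The setup is: by Definition~\ref{def:rand.proj.op} every random projection $P$ on $H$ lies in $\Ls^0(\Omega;H)$, hence by Remark~\ref{rem:2.5} it is of the form $P = A_{\mathbf p}$ for a measurable field ${\mathbf p} = \{p(\omega)\}_{\omega \in \Omega} \subset \Bc(H)$. Remark~\ref{rem:Skorohod.adjoint} gives $A_{\mathbf p}^\bullet = A_{{\mathbf p}^*}$ and Remark~\ref{rem:Skor.compos} gives $A_{\mathbf p}\, A_{\mathbf q} = A_{\{p(\omega) q(\omega)\}}$, so the defining conditions $P^\bullet = P$ and $PP = P$ force $p(\omega)^* = p(\omega)$ and $p(\omega)^2 = p(\omega)$ $\omega$-a.e., i.e.\ almost every $p(\omega)$ is an orthogonal projection on $H$. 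Writing $Q = A_{\mathbf q}$ analogously, the algebraic hypotheses $PQ = QP$, $PQ = O_H$, $PQ = Q$ translate to the corresponding a.e.\ identities for the fields, and the order $P \le Q$ is interpreted in the natural pointwise sense $p(\omega) \le q(\omega)$ a.e.

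With this dictionary in hand, each item reduces to a classical fact applied pointwise a.e. For (i), the product of two commuting orthogonal projections is itself an orthogonal projection, and $PQ = A_{\{p(\omega)q(\omega)\}}$. For (ii), $p(\omega)q(\omega) = 0$ a.e.\ forces $q(\omega)p(\omega) = (p(\omega)q(\omega))^* = 0$ a.e., and then $p(\omega) + q(\omega)$ is the projection onto the orthogonal sum of ranges; the special case of $P$ and $J_H - P$ uses that $J_H = A_{\{1_H\}}$ acts as the identity in the random composition, so that $P(J_H - P) = A_{\{p(\omega) - p(\omega)^2\}} = O_H$. For (iii), for commuting $p, q$ the operator $p + q - pq$ is the orthogonal projection onto $\operatorname{ran}(p) \vee \operatorname{ran}(q)$. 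For (iv), $p(\omega)q(\omega) = q(\omega)$ is equivalent to $\operatorname{ran}(q(\omega)) \subseteq \operatorname{ran}(p(\omega))$, i.e.\ $p(\omega) \ge q(\omega)$; taking adjoints yields $q(\omega)p(\omega) = q(\omega)$, and $p(\omega) - q(\omega)$ is then the projection onto $\operatorname{ran}(p(\omega)) \cap \operatorname{ran}(q(\omega))^\perp$. For (v), every orthogonal projection satisfies $p(\omega) \le 1_H$, which together with $J_H = A_{\{1_H\}}$ gives $P \le J_H$.

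The main obstacle is not in any single pointwise step -- those are classical projection calculus -- but in verifying that the field obtained from each algebraic operation (sum, difference, product, adjoint) is again a measurable field of bounded operators, so that the corresponding pointwise operator is itself a continuous decomposable self-adjoint idempotent Skorohod operator, i.e.\ a random projection, by the reverse direction of the dictionary. This measurability is routine and is already implicit in Remarks~\ref{rem:Skorohod.adjoint} and~\ref{rem:Skor.compos}. A small ancillary point, essentially of definition, is fixing the meaning of the random order $P \le Q$ appearing in clauses (iv) and (v); the pointwise choice $p(\omega) \le q(\omega)$ $\wp$-a.e.\ is the only one consistent with the dictionary, and once it is adopted the whole proposition collapses to applying the classical calculus of orthogonal projections $\omega$-a.e.
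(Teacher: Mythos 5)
Your proof is correct. The paper in fact offers no proof of this proposition --- it only asserts that the properties ``are not difficult to verify'' --- so there is nothing to compare line by line; but your pointwise reduction is exactly the argument the paper's framework is set up to support, and it goes through. Two fine points are worth making explicit. First, to pass from the operator identities $P^\bullet = P$ and $PP = P$ to the almost-everywhere identities $p(\omega)^* = p(\omega)$ and $p(\omega)^2 = p(\omega)$ you need the uniqueness of the decomposing field in Proposition~\ref{prop:decompos.clsd.op}; you use this implicitly and it should be cited. Second, the order you adopt ($p(\omega) \le q(\omega)$ $\wp$-a.e.) is precisely the one the paper later fixes in Remark~\ref{rem:trei.sapte}, namely $\langle (Ax)(\omega), x\rangle \le \langle (Bx)(\omega), x\rangle$ for $x \in H$ and almost all $\omega$, so your ``ancillary point'' is settled by the paper itself rather than being a free choice. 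It is also worth noting that parts of the proposition admit a purely algebraic proof that never invokes the decomposing fields: the paper demonstrates this style in the proof of Remark~\ref{rem:add.propr.rnd.ms}, where the orthogonality relation $E(\sigma)E(\tau) = O_H$ is derived from idempotency, selfadjointness and the injectivity of $J_H + E(\tau)$ alone. That route buys independence from the measurable-field machinery, while yours buys uniformity: once the dictionary is in place all five clauses follow at once from the classical projection calculus.
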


We now define the normal (s.o.) random operators in our general setting


\begin{defn}
A (s.o.) random operator $N \in \Rs^p(\Omega; H)$ ($p=0,2$) is said to be a normal (s.o.) random operator on $H$, if $N$ is closed 
$N \in \Ds^p(\Omega; H)$ and $NN^\bullet = N^\bullet N$.
\end{defn}
It is now not difficult to see that by applying Theorem \ref{thm:propr.rnd.op}, each selfadjoint second order random operator is a normal s.o. random operator.

On the other hand the following characterization shows that the above definition of random normality is equivalent in the case of continuous random operators to that given in \cite{Thang4}.
\begin{thm}\label{thm:car.normal}
A (s.o.) random operator $A$ on $H$ is normal, if and only if there is a ($2$-summable) measurable field ${\mathbf a} = \{a(\omega)\}_{\omega \in \Omega}$ of normal operators on $H$, such that $A = A_{\mathbf a}$.
\end{thm}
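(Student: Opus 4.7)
The plan is to translate random normality into a pointwise statement about the associated field, using the compatibility of the correspondence $A=A_{\mathbf{a}}$ with both the random adjoint (Theorem~\ref{thm:propr.rnd.op}(iv)) and the random composition (Theorem~\ref{thm:comp.rnd.op.gen}). The uniqueness of the associated field guaranteed by Proposition~\ref{prop:decompos.clsd.op} will make both directions quantitative, reducing the equivalence to an a.e.\ equality of two fields of operators on $H$.

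For sufficiency ($\Leftarrow$) I would start from $A=A_{\mathbf{a}}$ with every $a(\omega)$ normal on $H$. Normality of the closed densely defined operator $a(\omega)$ gives the identity $a(\omega)a(\omega)^{*}=a(\omega)^{*}a(\omega)$, a positive self-adjoint operator, which in particular belongs to $\Cc\Lc_d(H,H)$. By Theorem~\ref{thm:propr.rnd.op}(iv), combined with the uniqueness clause in Proposition~\ref{prop:decompos.clsd.op}, one identifies $A^{\bullet}=A_{\mathbf{a}^{*}}$ where $\mathbf{a}^{*}=\{a(\omega)^{*}\}_{\omega\in\Omega}$. Two applications of Theorem~\ref{thm:comp.rnd.op.gen}, to the pairs $(A,A^{\bullet})$ and $(A^{\bullet},A)$, then yield $AA^{\bullet}=A_{\mathbf{a}\mathbf{a}^{*}}$ and $A^{\bullet}A=A_{\mathbf{a}^{*}\mathbf{a}}$. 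Pointwise normality, together with uniqueness of the field, forces $AA^{\bullet}=A^{\bullet}A$, and $A\in\Ds^p(\Omega;H)$ is closed by construction, so $A$ is a normal (s.o.) random operator.

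For necessity ($\Rightarrow$) I would assume $A$ is a normal (s.o.) random operator on $H$, so $A\in\Ds^p(\Omega;H)$ by definition. Proposition~\ref{prop:decompos.clsd.op} supplies a uniquely determined measurable ($2$-summable) field $\mathbf{a}=\{a(\omega)\}_{\omega\in\Omega}$ of closed densely defined operators with $A=A_{\mathbf{a}}$. The defining condition $AA^{\bullet}=A^{\bullet}A$ presupposes that both random compositions are well defined, which requires $A^{\bullet}\in\Ds^p(\Omega;H)$; together with Theorem~\ref{thm:propr.rnd.op}(iv) this yields $A^{\bullet}=A_{\mathbf{a}^{*}}$. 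Since $a(\omega)$ is closed densely defined, $a(\omega)^{*}a(\omega)$ and $a(\omega)a(\omega)^{*}$ are positive self-adjoint by the classical von Neumann theorem, hence lie in $\Cc\Lc_d(H,H)$, so Theorem~\ref{thm:comp.rnd.op.gen} produces $A^{\bullet}A=A_{\mathbf{a}^{*}\mathbf{a}}$ and $AA^{\bullet}=A_{\mathbf{a}\mathbf{a}^{*}}$. The hypothesis $AA^{\bullet}=A^{\bullet}A$ combined with the uniqueness of the associated field then forces $a(\omega)a(\omega)^{*}=a(\omega)^{*}a(\omega)$ for almost every $\omega$, i.e.\ $a(\omega)$ is normal a.e.

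The step that will require the most care is the identification $A^{\bullet}=A_{\mathbf{a}^{*}}$ in the necessity direction. Theorem~\ref{thm:propr.rnd.op}(iv) a priori asserts only that $\bA_{\mathbf{a}^{*}}$ is a decomposable extension of $A^{\bullet}$ containing the minimal one, so one must argue that when $A^{\bullet}$ is itself in $\Ds^p(\Omega;H)$ its uniquely determined associated field (Proposition~\ref{prop:decompos.clsd.op}) coincides a.e.\ with $\mathbf{a}^{*}$. This is expected to follow from comparing the two decomposable extensions of $A^{\bullet}$ through their action on the common core produced in Theorem~\ref{thm:car.rnd.adj}. Once this identification is secured, the remainder of the proof is the pointwise translation outlined above.
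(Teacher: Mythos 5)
Your argument follows essentially the same route as the paper's proof: decomposability supplies the field $\mathbf{a}$, Theorem~\ref{thm:propr.rnd.op}(iv) identifies $A^\bullet$ with $A_{\mathbf{a}^*}$, Theorem~\ref{thm:comp.rnd.op.gen} identifies $AA^\bullet$ and $A^\bullet A$ with the operators associated to the product fields $\{a(\omega)a(\omega)^*\}$ and $\{a(\omega)^*a(\omega)\}$, and the uniqueness of the decomposing field translates $AA^\bullet=A^\bullet A$ into pointwise normality (and back). The differences are only in presentation: you write out the direction the paper dismisses as ``straightforward,'' and you explicitly flag the identification $A^\bullet=A_{\mathbf{a}^*}$ as the delicate step, which the paper simply asserts from Theorems~\ref{thm:car.rnd.adj} and~\ref{thm:propr.rnd.op} (adding only the remark that $\mathbf{a}$ and $\mathbf{a}^*$ share a common core).
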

\begin{proof}
For the ``if'' part $D(A) = D(A^\bullet)$ implies that $A$ is densely defined, meaning that $A^\bullet \in \Rs^p(\Omega; H)$. Then, since $A \in \Ds^p(\Omega;H)$ it results that there is a measurable ($2$ - summable) field ${\mathbf a} = \{a(\omega)\}_{\omega \in \Omega}$ 
such that $A$ is random decomposable by ${\mathbf a}$, i.e. $A = A_{\mathbf a}$. Now, from Theorem \ref{thm:car.rnd.adj} and Theorem \ref{thm:propr.rnd.op} we have that $A^\bullet = A^\bullet_{\mathbf a} = A_{{\mathbf a}^*}$, i.e. $A^\bullet \in \Ds^p (\Omega;H)$. Moreover, ${\mathbf a}^* = \{a(\omega)^*\}_{\omega \in \Omega}$ has $D(A^\bullet)$ as common core and $D(A) = D(A^\bullet)$ means that ${\mathbf a}$ and ${\mathbf a}^*$ have the same common core.

Now, considering the extensions $\bA_{\mathbf a}$ and $\bA_{{\mathbf a}^*}$ of $A$ and $A^\bullet$ respectively, we see that $AA^\bullet = \bA_{\mathbf a} A_{{\mathbf a}^*} = A_{{\mathbf b}}$, where ${\mathbf b}=\{a(\omega) a(\omega)^*\}_{\omega \in \Omega}$ and $A^\bullet A = \bA_{{\mathbf a}^*} A_{\mathbf a}  = A_{{\mathbf c}}$, where ${\mathbf c} = \{a(\omega)^* a(\omega)\}_{\omega \in \Omega}$. Thus ${\mathbf a}{\mathbf a}^* = {\mathbf a}^*{\mathbf a}$, which together with the fact that the families ${\mathbf a}$ and ${\mathbf a}^*$ share a common core, implies that ${\mathbf a}$ consists of normal operators.

The ``only if'' part results easier through a straightforward reasoning.
\end{proof}
Now, it is adequate to introduce in the random operators framework the analogue of the spectral measure (i.e. projection operator valued measure) from classical operator theory.
\begin{defn}
  Let $H$ be a complex separable Hilbert space and $(\Gamma , \Sigma)$ a measurable space (i.e. $\Sigma$ is a $\sigma$-algebra of subsets of $\Gamma$). By a random projection operator valued measure (briefly, r.p.o.v. measure) on $(\Gamma , \Sigma , H)$ we mean a mapping $E$ from $\Sigma$ into $\Ls^p (\Omega; H)$ such that
        \begin{enumerate}[(i)]
          \item $E(\sigma)$ is a random projection operator on $H$ for each $\sigma\in \Sigma$;
          \item If $\sigma_j \in \Sigma,\ j \in {\mathbb N}$ are pairwise disjoint, then
          \[ E( \bigcup\limits_{i=1}^\infty \sigma_j ) x (\omega) = \sum\limits_{j=1}^\infty E(\sigma_j) x (\omega),\ \text{ for each } \ x \in H, \text{ and almost each } \omega ;\]
          \item $E(\Gamma) = J_H$.
        \end{enumerate}
\end{defn}
\begin{rem}\label{rem:add.propr.rnd.ms}
A r.p.o.v. measure has also the following two properties

\begin{enumerate}[(iv)]
\item $E(\varnothing) = O_H$.
\item $E(\sigma)E(\tau) = E(\sigma \cap \tau)$, for each pair $\sigma , \tau \in \Sigma$.
\end{enumerate}
Indeed, first when $\sigma_j = \varnothing,\ j\in {\mathbb N}$ in the above condition (ii), we infer (iv).

Now, considering in (ii) $\sigma_j = \varnothing$ for $j>k$, we see that (iv) and (ii) imply
\begin{equation*}
  E(\bigcup\limits_{j=1}^{k} \sigma_j) = \sum\limits_{j=1}^k E(\sigma_j).
\end{equation*}
Further, the finite additivity of $E$ implies the multiplicativity of $E$ in the following way (a similar technique as in \cite[Lemma 4.3, pp. 67]{Schmud2}).

The successive implications
\begin{multline*}
(J_H + E(\tau))x = 0 \Rightarrow \langle (J_H + E (\tau))x (\omega) , x \rangle = \|x\|^2 + \|E(\tau) x(\omega)\|^2 = 0 \\
\Rightarrow x=0
\end{multline*}
yield that $J_H + E (\tau)$ is injective for each $\tau \in \Sigma$. Now, for disjoint $\sigma, \tau  \in \Sigma$ the finite additivity of $E$ implies that $E(\sigma) + E (\tau)$, being equal to $E(\sigma \cup \tau)$, is again a random projection operator. Therefore $E(\sigma) + E(\tau) = (E(\sigma) + E(\tau))^2 = E(\sigma) + E(\tau) + E(\sigma)E(\tau) + E(\tau)E(\sigma)$ leads to $E(\sigma) E(\tau) + E(\tau) E(\sigma) = O_H $, which after multiplication to the right with $E(\tau)$ gives us successively $E(\sigma) E(\tau) + E(\tau) E(\sigma) E(\tau) = (J_H + E(\tau))E(\sigma) E(\tau) = O_H$, from where by the injectivity of $J_H + E(\tau)$ we have $E(\sigma) E(\tau) = O_H$. Further, for arbitrary $\sigma , \tau \in \Sigma$, denoting $\sigma_0 :\ = \sigma \cap \tau$, $\sigma_1 :\ = \sigma \setminus \sigma_0$ and $\sigma_2 :\ = \tau \setminus \sigma_0$, we see that all pairs $(\sigma_j, \sigma_k),\ j\ne k,\ j,k = 0,1,2 $ consisting of disjoint sets, by the second part above satisfy $E(\sigma_j) E(\sigma_k) = O_H$ for $j\ne k;\ j,k = 0,1,2 $. Applying this in the following calculus
\begin{multline*}
  E(\sigma) E(\tau) = E(\sigma_0 \cup \sigma_1)E(\sigma_0\cup \sigma_2) = (E(\sigma_0) + E(\sigma_1))(E(\sigma_0) + E(\sigma_2) ) = \\
  E(\sigma_0) + \sum\limits_{\substack{j,k = 0 \\ j\ne k} }^{2} E(\sigma_j) E(\sigma_k) = E(\sigma_0) = E (\sigma\cap \tau),
\end{multline*}
we get that (v) holds.
\end{rem}

Consequently, the above concept is in fact the same as the \emph{generalized random spectral measure} from \cite{Thang4}.

\begin{rem}\label{rem:trei.sapte}
For each pair $\sigma, \tau \in \Sigma$, we have that $E(\sigma)$ commutes with $E(\tau)$ with respect to the random composition.
Moreover, when $\sigma \supseteq \tau $, it is not hard to see that $E(\sigma) = E(\tau) + E (\sigma\setminus \tau) \ge E(\tau)$, the inequality being considered with respect to the order given by
\[
A\le B \Leftrightarrow \langle (Ax)(\omega), x\rangle \le \langle (Bx)(\omega) , x \rangle ,\ x\in H,\ \omega \- \ \text{a.e.}
\]
in the set of all continuous selfadjoint random operators.
\end{rem}


Let us recall now some basic facts about the integral with respect to the r.p.o.v. measure $E$ on $(\Gamma, \Sigma, H)$, which was defined in \cite{Thang4}. Namely, first for simple functions
\[ f(\gamma) = \sum\limits_{i=1}^n c_i \chi_{\sigma_i} (\gamma) ,\ \gamma\in\Gamma,\ c_i \in \C,\ \sigma_i \in \Sigma, \]
the integral $I_E(f) = I(f)$ is defined as
\[ (I(f)x)(\omega) = \sum\limits_{i=1}^n c_i E(\sigma_i) x (\omega) ,\ x \in H,\ \omega \in\Omega, \]
and then, by passing to the uniform limit of simple functions, $I(f)$, denoted by $\int\limits_\Gamma f(\gamma) d E (\gamma) $, becomes a continuous random normal operator on $H$, for each element $f$ of the Banach $*$-algebra $B(\Gamma, \Sigma)$, consisting of all bounded $\Sigma$ - measurable complex valued functions. In this way, the application $f \mapsto I(f)$ is a representation of the Banach $*$-algebra $B(\Gamma, \Sigma)$ into $\Ls^0 (\Omega; H)$. In fact it's properties are contained in the following (see Theorems 4.2 and 4.3 of \cite{Thang4})
\begin{thm} \label{thm:propr.int.op}
The mapping $f\mapsto I(f)$ has the following properties
\begin{enumerate}[(i)]
\item it is linear, i.e. for each $f, g \in B(\Gamma,\Sigma)$ and $\lambda \in \C$ it holds
\[ I(f+g) = I(f) + I(g) ;\quad I(\lambda f) = \lambda I(f) \]
\item it is multiplicative, i.e. for each $f, g \in B(\Gamma, \Sigma)$
\[ I(fg) = I(f) I(g) \]
\item it is involution preserving, i.e. for each $f \in B(\Gamma, \Sigma)$,
\[ I(f)^\bullet = I(\bar f), \]
where $\bar f$ stands for the complex conjugation of $f$;
\item $\lim\limits_n (I(f_n)x) = I (\lim\limits_n f_n) x$, for each $x\in H$ and any convergent sequence $\{ f_n\}_n $ from $B(\Gamma , \Sigma)$.
\end{enumerate}
\end{thm}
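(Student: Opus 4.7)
The plan is to verify properties (i)--(iv) first for simple functions---where they become essentially algebraic identities---and then to pass to the uniform limit that underlies the definition of $I(f)$ on all of $B(\Gamma,\Sigma)$, relying throughout on the basic algebraic facts about $E$ collected in Remark \ref{rem:add.propr.rnd.ms} and the properties of $\Ls^p(\Omega; H)$ as a topological $*$-algebra under random composition.

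First I would handle the simple-function case. Given $f = \sum_{i=1}^n c_i \chi_{\sigma_i}$ and $g = \sum_{j=1}^m d_j \chi_{\tau_j}$, a routine refinement lets me rewrite both against a common finite measurable partition $\{\rho_k\}_{k=1}^N$ of $\Gamma$, as $f = \sum_k a_k \chi_{\rho_k}$ and $g = \sum_k b_k \chi_{\rho_k}$. Linearity (i) is then immediate. For multiplicativity (ii), I would use that $E(\rho_k) E(\rho_\ell) = \delta_{k\ell} E(\rho_k)$ (Remark \ref{rem:add.propr.rnd.ms}(v)) together with Remark \ref{rem:Skor.compos} (which tells me that the random composition of Skorohod operators corresponds to pointwise composition of the underlying measurable fields); this gives $I(f) I(g) x(\omega) = \sum_{k,\ell} a_k b_\ell E(\rho_k) E(\rho_\ell) x(\omega) = \sum_k a_k b_k E(\rho_k) x(\omega) = I(fg) x(\omega)$. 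For the involution property (iii), I would use that each $E(\sigma_i)$ is a random projection, hence $E(\sigma_i)^\bullet = E(\sigma_i)$, and combine this with Proposition \ref{prop:propr.bullet}(iv) and the additivity of $\bullet$ on sums in $\Ls^p(\Omega; H)$ to conclude $I(f)^\bullet = \sum \overline{c_i} E(\sigma_i) = I(\bar f)$.

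Second I would extend (i)--(iii) to all $f \in B(\Gamma, \Sigma)$ by uniform approximation. Choosing simple $f_n$ with $\|f_n - f\|_\infty \to 0$, the fact that each random projection $E(\sigma)$ is a contractive (s.o.) random operator gives the uniform estimate $\|I(f_n)x(\omega) - I(f_m)x(\omega)\|_H \le \|f_n - f_m\|_\infty \|x\|$, so $I(f_n)x$ is Cauchy and converges to $I(f)x$ in $L^0(\wp, H)$ (indeed in $L^2(\wp, H)$). Properties (i) and (iii) then transfer by continuity of linear combinations and of the random adjoint (Remark \ref{rem:Skorohod.adjoint}). Property (ii) transfers because $\Ls^p(\Omega; H)$ is a topological $*$-algebra in the pointwise-convergence topology, so the random product is jointly continuous on sequences that are uniformly bounded in the sup-norm of the fibres, as is our case.

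Finally, property (iv) follows from the very same contractive estimate: if $f_n \to f$ in $B(\Gamma,\Sigma)$, then $\|I(f_n)x(\omega) - I(f)x(\omega)\|_H \le \|f_n - f\|_\infty \|x\|$ $\omega$-a.e. and the right-hand side tends to $0$. The main obstacle will be the passage to the limit in (ii): the random composition is defined via decomposable extensions and is not obviously continuous in every sense, so one must keep careful track that all the $I(f_n)$ stay inside $\Ls^p(\Omega; H)$ and that the convergence is strong enough (which the uniform pointwise-in-$\omega$ estimate ensures) for Remark \ref{rem:Skor.compos} to be applied in the limit.
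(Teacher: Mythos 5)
Your argument is sound, but note that the paper does not actually prove this theorem: it is quoted from Theorems 4.2 and 4.3 of the cited work of Thang, Thinh and Quy, so there is no internal proof to compare against. Your route --- establish (i)--(iii) for simple functions written over a common disjoint refinement, using $E(\rho_k)E(\rho_\ell)=E(\rho_k\cap\rho_\ell)$ and $E(\sigma)^\bullet=E(\sigma)$, derive the contractive estimate $\|I(h)x(\omega)\|\le\|h\|_\infty\|x\|$ from the pairwise orthogonality of the vectors $E(\rho_k)x(\omega)$, and then pass to the uniform limit --- is exactly the standard argument one expects here, and it is consistent with the estimates the paper itself records afterwards (its Proposition giving $\|(I(f)x)(\omega)\|\le\|f\|\,\|x\|$ and Corollary \ref{cor:int.norm.I}). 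The only place where your wording is looser than the mathematics requires is the transfer of (iii): the random adjoint is \emph{not} continuous for the bare topology of pointwise convergence of operators, so ``continuity of the random adjoint'' is not a licence you actually hold. What saves the step is either the uniform fibrewise estimate (the fibre operators of $I(f_n)$ converge in operator norm $\omega$-a.e., hence so do their adjoints), or, more simply, passing to the limit in the defining identity $\langle I(f_n)x(\omega),y\rangle_H=\langle x,(I(\bar f_n)y)(\omega)\rangle_H$ and invoking the closedness of $\bullet$ from Proposition \ref{prop:propr.bullet}(i). With that one clarification, and with the null-set bookkeeping handled via a countable dense subset of $H$ (as the separability hypotheses permit), your proof is complete; your identification of the limit passage in (ii) as the delicate point, and your resolution of it via the uniform fibre bounds, is correct.
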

From the random projection operator valued measure $E$ it is possible to derive, as in the classical case, random (complex valued) measures on $(\Gamma, \Sigma)$ as follows.

If for $x,y \in H$ and $\omega \in \Omega$ we put
\begin{equation}\label{eq:spectr.ms}
\begin{split}
   E_{x,y}^\omega (\sigma)  & :\ = \langle E(\sigma)x(\omega), y \rangle_H ;\ \sigma\in\Sigma \\
      E_x^\omega (\sigma) & :\ =  E(\sigma) x(\omega) ,\ \sigma \in \Sigma,
\end{split}
\end{equation}
it is easy to see that $E_{x,y}^\omega$ is a complex measure, while $E_x^\omega$ is a $H$-valued measure on $(\Gamma, \Sigma)$. So, for each fixed pair $x,y \in H$, respectively each $x\in H$ we have that
$E_{x,y}:\ =\bigl(E_{x,y}^\omega \bigr)_{\omega \in \Omega}$ is a (positive if $x = y$) $L^0 (\wp)$ - valued measure, while $E_x:\ = \bigl(E_x^\omega\bigr)_{\omega \in \Omega}$ is a $L^0 (\wp, H)$ - valued measure on $(\Gamma, \Sigma)$. Moreover, looking at the $L^2(\wp, H)$ - valued measure, denoted $G_x$, from the proof of Theorem 4.3 (2) in \cite{Thang4}, we see that $G_x = E_x = \bigl( E_x^\omega\bigr)_{\omega \in \Omega}$, hence $E_x$ is, in fact, an $L^2(\wp, H)$ - valued measure on $(\Gamma, \Sigma)$. Finally it is a simple matter to observe that the measure $E_{x,y}$ is $L^2(\wp)$ - valued.

Moreover, as in the classical case (see \cite[Lemma 4.4]{Schmud2}) we have the following characterization
\begin{lem}
  A map $E$ of the $\sigma$-algebra $\Sigma$ into the set of random projection operators on $H$ is a r.p.o.v. measure if and only if it is multiplicative and, for each $x \in H$, the set function $E_{x,x}(\cdot)$ is a positive random measure.
\end{lem}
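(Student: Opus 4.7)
The plan is to prove both directions by translating between operator-level identities ($E(\sigma) = \cdots$) and pointwise quadratic-form identities in the scalar measures $E_{x,x}^\omega$. The bridge is the projection identity $\|E(\sigma)x(\omega)\|^2 = \langle E(\sigma)x(\omega),x\rangle_H = E_{x,x}^\omega(\sigma)$, which allows vanishing of the scalar measure to force vanishing of the operator applied to $x$.

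For the ``only if'' direction I would note that multiplicativity is exactly Remark \ref{rem:add.propr.rnd.ms}(v). Fixing $x\in H$, positivity of $E_{x,x}^\omega$ is immediate from the projection identity, while countable additivity of $E_{x,x}^\omega$ for a.e. $\omega$ follows by applying $\langle \cdot, x\rangle_H$ termwise to the random countable additivity from Definition of r.p.o.v. measure (ii), the pointwise series converging in $H$ for a.e. $\omega$.

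For the ``if'' direction I would establish the three clauses of the r.p.o.v. measure definition in turn. First, $E(\varnothing)=O_H$: multiplicativity gives $E(\varnothing)^2=E(\varnothing)$ (already automatic since $E(\varnothing)$ is a projection) and $E_{x,x}^\omega(\varnothing)=0$ combined with $\|E(\varnothing)x(\omega)\|^2 = E_{x,x}^\omega(\varnothing)$ forces $E(\varnothing)x=0$ for all $x$. Next, for disjoint $\sigma,\tau$ multiplicativity yields $E(\sigma)E(\tau) = E(\sigma\cap\tau) = E(\varnothing) = O_H$, so the projections are pointwise orthogonal and $E(\sigma)+E(\tau)$ is itself a random projection; moreover $E(\sigma\cup\tau)E(\sigma) = E(\sigma)$ and $E(\sigma\cup\tau)E(\tau) = E(\tau)$ imply $E(\sigma\cup\tau)\ge E(\sigma)+E(\tau)$ in the sense of Remark \ref{rem:trei.sapte}, and the identity
\[
\langle (E(\sigma\cup\tau)-E(\sigma)-E(\tau))x(\omega),x\rangle_H = E_{x,x}^\omega(\sigma\cup\tau) - E_{x,x}^\omega(\sigma) - E_{x,x}^\omega(\tau) = 0 \quad \text{a.e.}
\]
applied to the random projection $E(\sigma\cup\tau)-(E(\sigma)+E(\tau))$ gives finite additivity. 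Then for pairwise disjoint $\{\sigma_j\}_{j\in\N}$ with $S = \bigcup_j \sigma_j$ and $\tau_N = \bigcup_{j\le N}\sigma_j$, finite additivity gives $E(\tau_N)x = \sum_{j=1}^N E(\sigma_j)x$, while
\[
\Bigl\|E(S)x(\omega) - \sum_{j=1}^N E(\sigma_j)x(\omega)\Bigr\|^2 = \|E(S\setminus \tau_N)x(\omega)\|^2 = E_{x,x}^\omega(S\setminus \tau_N) \xrightarrow[N\to\infty]{} 0
\]
a.e., by continuity from above of the positive measure $E_{x,x}^\omega$, which yields the required countable additivity. Finally, $E(\Gamma)=J_H$ will follow from the normalization $E_{x,x}^\omega(\Gamma)=\|x\|^2$ a.e. (implicit in interpreting ``$E_{x,x}$ is a positive random measure'' in conjunction with $E(\Gamma)$ being a projection): expanding $\|x-E(\Gamma)x(\omega)\|^2 = \|x\|^2 - E_{x,x}^\omega(\Gamma) = 0$ a.e. forces $E(\Gamma)x = x$ for all $x$.

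The main obstacle I expect is the last point, that is, pinning down precisely what ``positive random measure'' means with respect to its total mass so that $E(\Gamma)=J_H$ can be recovered; the other steps are essentially dictated by the dictionary between projections and their quadratic forms, once one observes that $E$ taking values in random projections converts all scalar vanishing statements into operator-level vanishing on each $x\in H$.
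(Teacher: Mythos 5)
Your proof is correct and follows essentially the same route as the paper's: the bridge $E_{x,x}^\omega(\sigma)=\|E(\sigma)x(\omega)\|^2$, multiplicativity giving $E(\sigma_j)E(\sigma_k)=O_H$ for disjoint sets, and then countable additivity of the scalar measures combined with pairwise orthogonality (and continuity from above, which you make explicit via $E(S\setminus\tau_N)$, whereas the paper leaves the identification of the limit implicit) to recover the vector-level countable additivity. The normalization issue you flag for $E(\Gamma)=J_H$ is genuine but is not resolved in the paper either --- its converse argument never verifies condition (iii) at all --- so your reading that the total mass $E_{x,x}^\omega(\Gamma)=\|x\|^2$ is built into the notion of ``positive random measure'' (or, equivalently, that $E(\Gamma)=J_H$ should be added as a hypothesis, as in the classical Lemma 4.4 of Schm\"udgen) is the only way to make the statement literally correct.
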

\begin{proof}
When $E$ is a r.p.o.v. measure, it satisfies (v) and, for each $x\in H$ it is obvious that $E_{x,x}$ is a positive random measure.

Conversely, if $E$ is a random projection operator valued mapping on $\Sigma$, such that $E_{x,x}$ is a $L^0 (\wp)$-valued measure for each $x\in H$, then, because of the finite additivity of all $E_{x,x},\ x\in H$, we have the finite additivity of $E(\cdot)$, which, in turn,  by the proof of Remark 3.5, implies $E(\varnothing) = O_H$. Considering $\{ \sigma_j\}_j $ a pairwise disjoint sequence of sets in $\Sigma$, we have $E(\sigma_j)E(\sigma_k) = O_H$ for $j \ne k$. This implies that $E(\sigma_j)x(\omega)$ and $E(\sigma_k) x(\omega)$ are pairwise orthogonal ($k\in \N , x\in H, \omega \in \Omega$). Now the countably additivity of $E_{x,x}$ yields
\[ E^\omega_{x,x} \Bigl(\bigcup\limits_{j=1}^\infty \sigma_j\Bigr) = \sum_{j=1}^{\infty} E_{x,x}^\omega (\sigma_j) = \sum_{j=1}^{\infty} \|E(\sigma_j)x(\omega)\|^2  \]
and by the pairwise orthogonality of $E(\sigma_j)x(\omega)$ and $E(\sigma_k) x(\omega)$ we conclude
\[ \sum_{j=1}^{\infty} E(\sigma_j) x(\omega) = E(\sigma) x(\omega) \quad \omega - \text{ a.e. } \qedhere\]
\end{proof}

By using now integration with respect to these measures and the properties from Theorem \ref{thm:propr.int.op} we can also infer other properties of the $*$-representation  $f \mapsto I(f)$ of $B(\Gamma, \Sigma)$ into $\Ls^2(\Omega; H)$ gathered in
\begin{prop}
For $f \in B(\Gamma, \Sigma)$ and $x,y \in H$ we have
  \begin{enumerate}[(i)]
    \item $I(f) x(\omega) = \int f(\gamma) d E_x^\omega (\gamma)$;
    \item $\langle I(f) x (\omega) , y \rangle_H = \int f(\gamma) d E_{x,y}^\omega (\gamma)$;
    \item $\|(I(f) x)(\omega) \|^2_H = \int |f(\gamma)|^2 d E_{x,x}^\omega (\gamma)$;
    \item $\|(I(f)x)(\omega)\| \le \|f\| \|x\|$.
  \end{enumerate}
The equalities above are true $\wp$ - a.e. Moreover, the function (of $\omega \in \Omega$) in (i) is in $L^2 (\wp, H)$, the one from (ii) is in $L^2(\wp)$ and the one from (iii) is in $L^1 (\wp)$.
\end{prop}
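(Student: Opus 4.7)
The plan is to verify (i)--(iv) first for simple functions directly from the definition of $I$, and then extend to a general $f\in B(\Gamma,\Sigma)$ by uniform approximation, appealing to Theorem \ref{thm:propr.int.op}(iv) together with the bound that (iv) itself supplies. The integrability assertions at the end of the proposition will then follow as immediate consequences of (iv) and the fact that $\wp$ is a probability measure.

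For a simple function $f=\sum_{i=1}^{n}c_i\chi_{\sigma_i}$, with the $\sigma_i\in\Sigma$ made pairwise disjoint by refining the partition, the definition gives $(I(f)x)(\omega)=\sum_i c_i E(\sigma_i)x(\omega)$. Identity (i) is then immediate from the definition of $E_x^\omega$ in \eqref{eq:spectr.ms}, and (ii) follows by pairing with $y$. For (iii), multiplicativity and selfadjointness of the random projections (Remark \ref{rem:add.propr.rnd.ms}(v)) yield $\langle E(\sigma_i)x(\omega),E(\sigma_j)x(\omega)\rangle_H=E_{x,x}^\omega(\sigma_i\cap\sigma_j)$, so the cross terms vanish and
\begin{equation*}
\|I(f)x(\omega)\|_H^2=\sum_{i=1}^{n}|c_i|^2 E_{x,x}^\omega(\sigma_i)=\int|f(\gamma)|^2\,dE_{x,x}^\omega(\gamma),\qquad\wp\text{-a.e.}
\end{equation*}
Estimate (iv) is then immediate from (iii) via $E_{x,x}^\omega(\Gamma)=E(\Gamma)x(\omega),x\rangle_H=\|x\|_H^2$ and $|f|\le\|f\|$.

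For the extension, choose simple functions $f_n$ with $\|f_n-f\|\to 0$. Applied to $f_n-f_m$, the already-proved version of (iv) gives $\|I(f_n)x(\omega)-I(f_m)x(\omega)\|_H\le\|f_n-f_m\|\,\|x\|_H$ for $\wp$-a.e.\ $\omega$, so off a single $\wp$-null set the functions $I(f_n)x(\cdot)$ converge uniformly in $H$ to a limit, which by Theorem \ref{thm:propr.int.op}(iv) must coincide $\wp$-a.e.\ with $I(f)x(\cdot)$. On the integration side, for each fixed $\omega$ the uniform convergence of $f_n$ together with the finiteness $E_{x,x}^\omega(\Gamma)=\|x\|_H^2<\infty$ (and the analogous bounds for $|E_{x,y}^\omega|$ and $\|E_x^\omega\|$) lets us pass to the limit in the three integrals $\int f_n\,dE_x^\omega$, $\int f_n\,dE_{x,y}^\omega$ and $\int|f_n|^2\,dE_{x,x}^\omega$, yielding (i)--(iii) for $f$; (iv) for $f$ follows by the same limit argument. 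Finally, the closing integrability statements follow by dominating the functions in (i)--(iii) by $\|f\|\,\|x\|$, $\|f\|\,\|x\|\,\|y\|$ and $\|f\|^2\|x\|^2$ respectively and using that $\wp$ has mass $1$, so $L^\infty(\wp,\cdot)\subset L^p(\wp,\cdot)$ for all $p$. The only delicate point is the bookkeeping of $\wp$-null exceptional sets when passing to the limit in $n$, which is cleanly handled by the uniform Cauchy estimate above, valid off a \emph{common} null set.
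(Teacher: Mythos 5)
Your proof is correct and follows essentially the same route as the paper's: establish the identities for simple functions directly from the definition and from the multiplicativity and selfadjointness of the random projections, then pass to the uniform limit using the contraction estimate (iv) and Theorem \ref{thm:propr.int.op}(iv). The only real divergence is in (iii): the paper obtains it for general $f\in B(\Gamma,\Sigma)$ in one step by applying (ii) with $y=I(f)x(\omega)$ together with the multiplicativity and involution-preservation of $I$ from Theorem \ref{thm:propr.int.op} (effectively $\|I(f)x(\omega)\|_H^2=\langle I(|f|^2)x(\omega),x\rangle_H=\int|f|^2\,dE_{x,x}^\omega$), which spares the orthogonality computation on disjointified simple functions and the second limit passage that you carry out by hand; your version is more self-contained but re-proves what the already-established $*$-representation properties supply for free. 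Two small remarks: in your verification of (iv) the opening bracket is missing in $E_{x,x}^\omega(\Gamma)=\langle E(\Gamma)x(\omega),x\rangle_H$, and your explicit tracking of a common $\wp$-null set in the Cauchy estimate is a useful point that the paper leaves implicit.
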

\begin{proof}
(i) follows from the definition of $I(f)$, first for simple functions and then for arbitrary $f$ from $B(\Gamma, \Sigma)$.

(ii) results by taking in (i) the scalar product with $y \in H$.

(iii) results by applying (ii) with $y = I(f) x(\omega)$ and then the multiplicativity and the invariance to involution of the mapping $f \mapsto I(f)$.

(iv) follows easily from (iii) since $E_{x,x}^\omega (\Gamma) = \|x\|^2,\ x\in H$.
\end{proof}
In this way we are led to the following
\begin{cor} \label{cor:int.norm.I}
The mapping $f\mapsto I(f)$ is a contractive random $*$-representation of $B(\Gamma, \Sigma)$ into $\Ls^2(\Omega;H)$.
\end{cor}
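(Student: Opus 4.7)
The plan is to read off the corollary as a direct consequence of Theorem \ref{thm:propr.int.op}, of the preceding Proposition, and of Proposition \ref{prop:bnd.cond}, so essentially all the work has already been done.

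First I would address the ``random $*$-representation'' part. Linearity, multiplicativity and involution invariance of $f \mapsto I(f)$ are exactly items (i), (ii), (iii) of Theorem \ref{thm:propr.int.op}, so nothing remains to check for the algebraic structure beyond remarking that these properties were originally stated for the codomain $\Ls^0(\Omega; H)$ and hold verbatim in $\Ls^2(\Omega; H)$ once the range is shown to land there.

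Next I would upgrade $I(f)$ from $\Ls^0(\Omega; H)$ to $\Ls^2(\Omega; H)$ and prove contractivity in one shot. The key input is item (iv) of the preceding Proposition, which states that $\|(I(f)x)(\omega)\| \le \|f\| \, \|x\|$ for $\omega$-almost every $\omega$. Setting $r(\omega) :\ = \|f\|$, one has a \emph{constant} positive random variable which lies in $L^p(\wp)$ for every $p$, in particular in $L^2(\wp)$. This is precisely condition (ii) of Proposition \ref{prop:bnd.cond}, so the equivalence there (combined with Remark \ref{rem:extens}, which guarantees that a bound by an $L^2$-random variable puts the operator in $\Ls^2$) gives $I(f) \in \Ls^2(\Omega; H)$.

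Finally, contractivity of $f \mapsto I(f)$ as a map into $\Ls^2(\Omega; H)$ follows by integrating the pointwise estimate:
\begin{equation*}
\|I(f) x\|_{L^2(\wp, H)}^2 = \int_\Omega \|(I(f)x)(\omega)\|_H^2 \, \mathrm{d}\wp(\omega) \le \|f\|^2 \, \|x\|_H^2,
\end{equation*}
so that the operator norm of $I(f)$, viewed in $\Bc(H, L^2(\wp, H)) = \Ss^2(\Omega; H)$, is bounded by $\|f\|$. I expect no real obstacle here: the only slightly delicate point is checking that the chain of implications in Proposition \ref{prop:bnd.cond} still applies when we want to conclude membership in $\Ls^2$ rather than $\Ls^0$, and this is handled by the first sentence of Remark \ref{rem:extens}.
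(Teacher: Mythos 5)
Your proposal is correct and follows essentially the same route as the paper: the paper also obtains contractivity and membership in $\Ls^2(\Omega;H)$ by integrating the pointwise estimate from the preceding Proposition over $\Omega$ (it integrates item (iii) and bounds $|f|^2$ by $\|f\|^2$ inside the integral, which amounts to your use of item (iv)), and takes the algebraic $*$-representation properties for granted from Theorem \ref{thm:propr.int.op}. Your explicit appeal to Proposition \ref{prop:bnd.cond} and Remark \ref{rem:extens} to place $I(f)$ in $\Ls^2$ rather than merely in $\Ss^2$ is a minor, and if anything slightly more careful, variation on the same argument.
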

\begin{proof}
Integrating in (iii) with respect to the probability measure $\wp$, we have
\begin{align*}
\int \| I(f) x\|^2_{L^2(\wp,H)} & = \int_\Omega \Bigl(\int_\Gamma |f(\gamma)|^2 d E_{x,x}^\omega (\gamma)\Bigr) d \wp (\omega) \\
 & \le \|f\|^2_{B(\Gamma, \Sigma)} \int_\Omega\int_\Gamma d E_{x,x}^\omega (\gamma) d\wp (\omega) \\
 & = \|f\|^2_{B(\Gamma, \Sigma)} \int_\Omega E_{x,x}^\omega (\Gamma) d\wp(\omega) \\
 & = \|f\|^2_{B(\Gamma, \Sigma)} \int_\Omega \langle E(\Gamma) x(\omega), x\rangle d\wp (\omega) \\
 & = \|f\|^2_{B(\Gamma, \Sigma)} \int_\Omega \langle J_H (\omega)x, x\rangle d\wp (\omega) \\
 & = \|f\|^2 \int \|x\|^2 d\wp (\omega) = \|f\|^2_{B(\Gamma, \Sigma)} \|x\|^2,
\end{align*}
from where
\[
\|I(f) x\|_{L^2 (\wp, H)} \le \|f\|_{B(\Gamma, \Sigma)} \|x\| ;\quad x \in H,
\]
which leads to the conclusion that $I(f) x \in L^2(\wp, H)$, i.e. $I(f)$ is a s.o. random operator on $H$. Moreover, we finally have
\begin{equation} \label{eq:bnd.I(f)}
\|I(f)\|_{\Ls^2(\Omega; H)} \le \|f\|_{B(\Gamma, \Sigma)},\quad f \in B(\Gamma, \Sigma). \qedhere
\end{equation}
\end{proof}
Now we close this section by extending the integral $I_E (f)$ to unbounded $\Sigma$ - measurable functions.

For, let $\Mc = \Mc(\Gamma,\Sigma,E)$, be the $*$-algebra of $\Sigma$-measurable functions $f:\Gamma \to \C \cup \{\infty\}$, which are $E$-a.e. finite (i.e. $E(\{\gamma\in\Gamma:\ f(\gamma) = \infty \})=O_H$).


It is to be expected that the values of such $I(f)$ exceed $\Ls^2 (\Omega;H)$. This will be given by using a technique similar to that from subsection 4.3.2 of \cite{Schmud2}, where the concept of bounding sequence plays a principal role.

\begin{defn}
  Let $\Mc_1$ be a subset of $\Mc$. A sequence $\{\sigma_n\}_{n \in \N}$ from $\Sigma$ is a \emph{random bounding sequence} for $\Mc_1$, if each $f\in \Mc_1$ is bounded on $\sigma_n$, $\sigma_n \subseteq \sigma_{n+1},\ n \in \N$ and $E(\bigcup\limits_{n=1}^{\infty} \sigma_n) = J_H$.
\end{defn}
\begin{rem}
  \begin{enumerate}[(i)]
    \item When $\{ \sigma_n\}_{n\in \N}$ is a bounding sequence for a subset $\Mc_1 \subset \Mc$ by Remark \ref{rem:trei.sapte} we have $E(\sigma_n) \le E(\sigma_{n+1})$, $n\in\N$, and consequently $\lim\limits_{n} E(\sigma_n)x(\omega) = x,\ \wp$ - a.e. We also have that $\bigcup\limits_{n=1}^\infty E(\sigma_n) H$ is dense in $L^p(\wp, H), p=0,2$.
    \item When $\Mc_1$ is finite, then it has a bounding sequence. Indeed, when $\Mc_1 = \{f_1, \dots , f_m\}$, then $\sigma_n :\ = \{\gamma \in \Gamma :\ |f_j (\gamma)| \le n,\ j = 1,2, \dots, m\},\ n \in \N$ is a bounding sequence, since for $\sigma:\ = \bigcup\limits_{n=1}^\infty \sigma_n$ we have $\Gamma \setminus \sigma \subseteq \bigcup\limits_{j=1}^m \{ \gamma, f_j(\gamma) = \infty \}$ so  $E(\Gamma \setminus \sigma) = O_H$ and $E(\sigma) = E(\Gamma) = J_H$. It is a bounding sequence also for the $*$-subalgebra of $\Mc$ generated by $\Mc_1$.
    \item If $\{\sigma_n\}_{n\in \N}$ is a bounding sequence for $f\in \Mc(\Gamma,\Sigma,E)$ then $f\chi_{\sigma_n} \in B(\Gamma , \Sigma)$ and $I(f\chi_{\sigma_n}),\ n\in \N$ do exist by the preceding part.
  \end{enumerate}
\end{rem}
\begin{thm}
  Let $f \in \Mc$ and
  \begin{equation}\label{eq:domain.I(f)}
    D(\tilde I(f)) = \{ x\in H:\ f \in L^2 (E_{x,x}^\omega), \omega - \text{ a.e. }\}.
  \end{equation}
  The following assertions hold:
  \begin{enumerate}[(i)]
  \item when $x \in H$, then $x \in D(\tilde I(f))$, iff for each bounding sequence  $\{ \sigma_n \}_{n \in \N}$ for $f$, the sequence $I (f\chi_{\sigma_n}) x(\omega)$ converges $\omega$ - a.e. in $H$. 
  \item For $x\in D(\tilde I(f))$ the definition
  \begin{equation*}
  \tilde I (f) x(\omega) :\ = \lim\limits_{n} I (f\chi_{\sigma_n}) x(\omega) \quad \omega - \text{a.e.}
  \end{equation*}
  is correct, i.e. does not depend on the bounding sequence $\{ \sigma_n \}$ for $f$.
  \item $\bigcup_{n=1}^\infty E(\sigma_n) H (\subset D(\tilde I(f)))$ is a core for $\tilde I(f)$ and $E(\sigma_n) I(f) \le I(f) E(\sigma_n) = I(f\chi_{\sigma_n}),\ n \in \N$.
  \end{enumerate}
\end{thm}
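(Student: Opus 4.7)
The plan is to reduce all three assertions to the identity
$\|(I(h)x)(\omega)\|_H^{2}=\int_{\Gamma}|h(\gamma)|^{2}\,dE_{x,x}^{\omega}(\gamma)$
from part (iii) of the preceding Proposition together with the $*$-homomorphism properties of $f\mapsto I(f)$ on $B(\Gamma,\Sigma)$ (Theorem \ref{thm:propr.int.op}). The $\omega$-dependence is entirely governed by the positive scalar measure $E_{x,x}^{\omega}$, so each step becomes a one-parameter classical $L^{2}$-argument carried out $\wp$-a.e.

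For (i) I would fix $x\in H$ and a bounding sequence $\{\sigma_{n}\}$ for $f$, and compute, for $n\geq m$,
$$
\|I(f\chi_{\sigma_{n}})x(\omega)-I(f\chi_{\sigma_{m}})x(\omega)\|_{H}^{2}
=\int_{\sigma_{n}\setminus\sigma_{m}}|f|^{2}\,dE_{x,x}^{\omega},
$$
using linearity and the norm identity. If $f\in L^{2}(E_{x,x}^{\omega})$ a.e., the right-hand side tends to $0$ by dominated convergence, so $\{I(f\chi_{\sigma_{n}})x(\omega)\}$ is Cauchy in $H$ for a.e.\ $\omega$ and therefore converges; conversely, convergence of this sequence forces $\|I(f\chi_{\sigma_{n}})x(\omega)\|_{H}^{2}=\int |f|^{2}\chi_{\sigma_{n}}dE_{x,x}^{\omega}$ to be bounded in $n$, and monotone convergence gives $\int|f|^{2}\,dE_{x,x}^{\omega}<\infty$, i.e.\ $f\in L^{2}(E_{x,x}^{\omega})$.

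For (ii), given $x\in D(\tilde I(f))$ and two bounding sequences $\{\sigma_{n}\},\{\tau_{n}\}$ for $f$, the same norm identity yields
$$
\|(I(f\chi_{\sigma_{n}})-I(f\chi_{\tau_{n}}))x(\omega)\|_{H}^{2}
=\int |f|^{2}\chi_{\sigma_{n}\triangle \tau_{n}}\,dE_{x,x}^{\omega}.
$$
Since $E(\bigcup_{n}\sigma_{n})=E(\bigcup_{n}\tau_{n})=J_{H}$, the indicators $\chi_{\sigma_{n}\triangle\tau_{n}}$ vanish pointwise on an $E_{x,x}^{\omega}$-conull set; $|f|^{2}$ is integrable against $E_{x,x}^{\omega}$ by assumption, so dominated convergence gives that the two limits coincide $\wp$-a.e., establishing independence of the bounding sequence.

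For (iii), the inclusion $\bigcup_{n}E(\sigma_{n})H\subset D(\tilde I(f))$ and the intertwining identities follow from the multiplicativity in Theorem \ref{thm:propr.int.op}: for $m\geq n$,
$$
I(f\chi_{\sigma_{m}})E(\sigma_{n})=I(f\chi_{\sigma_{m}}\chi_{\sigma_{n}})=I(f\chi_{\sigma_{n}}),
$$
so the approximating sequence for any vector of the form $E(\sigma_{n})y$ is eventually constant and trivially convergent. Passing to the limit $m\to\infty$ in the above gives $\tilde I(f)E(\sigma_{n})=I(f\chi_{\sigma_{n}})$; pulling the continuous $E(\sigma_{n})$ through the limit in the definition of $\tilde I(f)$ on $D(\tilde I(f))$ yields $E(\sigma_{n})\tilde I(f)\subseteq \tilde I(f)E(\sigma_{n})$. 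The core property is obtained by taking, for $x\in D(\tilde I(f))$, the approximants $x_{k}:=E(\sigma_{k})x$: one has $x_{k}\to x$ (cf.\ the Remark following the definition of bounding sequence) and $\tilde I(f)x_{k}=I(f\chi_{\sigma_{k}})x\to\tilde I(f)x$ $\wp$-a.e.\ by the very definition of $\tilde I(f)$, so $\tilde I(f)$ coincides with the closure of its restriction to $\bigcup_{n}E(\sigma_{n})H$.

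The main obstacle is purely notational rather than analytic: since $E(\sigma_{n})$ is a random, not deterministic, projection, $E(\sigma_{n})H$ naturally lives in $L^{p}(\wp,H)$ and not in $H$, so to make the core statement rigorous one must consistently view $\tilde I(f)$ through its decomposable extension to $L^{p}(\wp,H)$, and the $x_{k}\to x$ and $\tilde I(f)x_{k}\to\tilde I(f)x$ convergences must be interpreted in the graph topology of that extension (or, equivalently, identified with the $J_{H}$-embedded constants). Once that convention is fixed, everything reduces to the two scalar tools used in (i) and (ii).
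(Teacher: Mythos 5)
Your argument is correct and is essentially the proof the paper intends: the paper itself only says the result ``can be obtained using the ideas contained in Theorem 4.13 of \cite{Schmud2}'', and your proposal is precisely that classical bounding-sequence argument carried out $\omega$-wise via the identity $\|(I(h)x)(\omega)\|_H^2=\int|h|^2\,dE_{x,x}^\omega$ and the multiplicativity of $I$. Your closing observation that $E(\sigma_n)H$ lives in $L^p(\wp,H)$ and that the core statement must be read through the decomposable extension is a fair point about the paper's own formulation, not a defect of your proof.
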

The proof can be obtained using the ideas contained in Theorem 4.13 of \cite{Schmud2}.

The extended mapping
\[ \Mc(\Gamma,\Sigma,E) \ni f \mapsto \tilde I(f) \in \Ds^2(\Omega, H) \]
has its properties (which are not difficult to prove) contained in
\begin{prop}
Let $f,g \in \Mc(\Gamma,\Sigma,E),\ \lambda \in {\mathbb C}$. Then
\begin{enumerate}[(i)]
\item $\tilde I(f) + \tilde I(g) \subset \tilde I (f+g)$ ;
\item $\tilde I (\lambda f) = \lambda \tilde I (f)$ ;
\item $ \tilde I (f g) \subset \tilde I(f) \tilde I (g)$ ;
\item $\tilde I(f)^\bullet = \tilde I (\bar f)$.
\end{enumerate}
\end{prop}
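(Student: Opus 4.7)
All four claims reduce, via truncation by common bounding sequences, to the corresponding properties of the bounded $*$-representation $I\colon B(\Gamma,\Sigma)\to \Ls^2(\Omega;H)$ from Theorem \ref{thm:propr.int.op}. The finiteness item of the preceding Remark guarantees that any finite collection of functions from $\Mc$ admits a common bounding sequence $\{\sigma_n\}_{n\in\N}$, and the identity $\tilde I(h)x(\omega)=\lim_n I(h\chi_{\sigma_n})x(\omega)$ $\wp$-a.e.\ from the preceding theorem is the bridge to the bounded calculus.

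\textbf{Parts (i) and (ii).} Fix $x\in D(\tilde I(f))\cap D(\tilde I(g))$ and choose a common bounding sequence $\{\sigma_n\}$ for $\{f,g,f+g\}$. Each truncation lies in $B(\Gamma,\Sigma)$, so linearity of the bounded $I$ gives
\[
I((f+g)\chi_{\sigma_n})x=I(f\chi_{\sigma_n})x+I(g\chi_{\sigma_n})x
\]
pointwise $\omega$-a.e. The right-hand side converges $\omega$-a.e.\ to $\tilde I(f)x(\omega)+\tilde I(g)x(\omega)$, and part (i) of the preceding theorem then forces $x\in D(\tilde I(f+g))$ with matching values; this is (i). For (ii), any bounding sequence for $f$ is also one for $\lambda f$, and the scalar homogeneity of the bounded $I$ transports directly to $\tilde I$.

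\textbf{Part (iv).} By property (iii) of the bounded $I$, $I(f\chi_{\sigma_n})^\bullet=I(\bar f\chi_{\sigma_n})$. For $x\in D(\tilde I(\bar f))$, any $y\in D(\tilde I(f))$, and a common bounding sequence for $\{f,\bar f\}$,
\[
\langle I(f\chi_{\sigma_n})y(\omega),x\rangle_H=\langle y, I(\bar f\chi_{\sigma_n})x(\omega)\rangle_H
\]
$\omega$-a.e.; letting $n\to\infty$ both sides converge a.e.\ by definition of $\tilde I$, which supplies the defining identity of the random adjoint and yields $\tilde I(\bar f)\subseteq \tilde I(f)^\bullet$. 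Each $\tilde I(h)$ is closed (as a decomposable random operator) and densely defined, since $\bigcup_n E(\sigma_n)H\subset D(\tilde I(h))$ is dense in $H$ by the bounding-sequence remark. Proposition \ref{prop:propr.bullet}(ii) combined with the symmetric inclusion applied to $\bar f$ then upgrades the inclusion to equality.

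\textbf{Part (iii), the main obstacle.} Take $x\in D(\tilde I(fg))$ and a common bounding sequence $\{\sigma_n\}$ for $\{f,g,fg\}$. Multiplicativity of the bounded $I$ gives $I((fg)\chi_{\sigma_n})=I(f\chi_{\sigma_n})\,I(g\chi_{\sigma_n})$, while part (iii) of the preceding theorem rewrites each factor as $I(f\chi_{\sigma_n})=\tilde I(f)E(\sigma_n)$ and analogously for $g$. The plan is to feed the truncations $x_n:=E(\sigma_n)x$ (which lie in every $D(\tilde I(\cdot))$ since $f,g,fg$ are bounded on $\sigma_n$) into the composition, use the $\omega$-a.e.\ convergence
\[
I((fg)\chi_{\sigma_n})x(\omega)\to\tilde I(fg)x(\omega),
\]
and then invoke the closedness of the decomposable extension $\mathbf{\tilde I(f)}$ together with that of $\tilde I(g)$ (Proposition \ref{prop:decompos.clsd.op}) to lift the truncated identity to $x\in D(\tilde I(g))$, $\tilde I(g)x\in D(\mathbf{\tilde I(f)})$, and $\mathbf{\tilde I(f)}(\tilde I(g)x)=\tilde I(fg)x$, which by the definition of the random composition is exactly $x\in D(\tilde I(f)\tilde I(g))$ with $(\tilde I(f)\tilde I(g))x=\tilde I(fg)x$. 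The genuinely delicate point, and the step I expect to require the most care, is that $fg\in L^2(E^\omega_{x,x})$ does not by itself force $g\in L^2(E^\omega_{x,x})$; the inclusion $x\in D(\tilde I(g))$ must therefore be extracted from the convergence of the composed operator through closedness of both factors, which is precisely where the decomposable-extension formalism (rather than a naive pointwise spectral argument) is indispensable.
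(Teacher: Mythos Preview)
The paper itself offers no proof of this proposition (it merely says the properties ``are not difficult to prove''), so your sketch cannot be compared to a written argument. Your treatment of (i) and (ii) is the standard one and is fine. There are, however, genuine problems in (iii) and (iv).

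\medskip
\textbf{Part (iii): the inclusion is stated the wrong way, and your argument cannot repair it.} As printed, $\tilde I(fg)\subset \tilde I(f)\tilde I(g)$ asserts $D(\tilde I(fg))\subset D(\tilde I(f)\tilde I(g))$. But unwinding the random composition via Definition~\ref{def:op.pointw.appl} and the classical identity $D(\mathbb I(f)\mathbb I(g))=D(\mathbb I(g))\cap D(\mathbb I(fg))$ (applied fiberwise to the field $\{\int f\,de(\omega)\}$) gives
\[
D(\tilde I(f)\tilde I(g))=D(\tilde I(g))\cap D(\tilde I(fg)),
\]
which is contained in, not containing, $D(\tilde I(fg))$. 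A concrete obstruction: take $f$ with $fg\equiv 1$ and $g$ unbounded; then $\tilde I(fg)=J_H$ has domain $H$, while $D(\tilde I(f)\tilde I(g))\subset D(\tilde I(g))\subsetneq H$. You correctly isolate the exact difficulty---``$fg\in L^2(E^\omega_{x,x})$ does not force $g\in L^2(E^\omega_{x,x})$''---but then claim that closedness of $\tilde I(g)$ and of $\mathbf{\tilde I(f)}$ will supply $x\in D(\tilde I(g))$. It will not: to invoke closedness of $\tilde I(g)$ on the sequence $x_n=E(\sigma_n)x$ you would need $\tilde I(g)x_n=I(g\chi_{\sigma_n})x$ to converge, and that convergence is \emph{equivalent} to $g\in L^2(E^\omega_{x,x})$, which is what you are trying to prove. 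The argument is circular. The intended (and provable) statement is $\tilde I(f)\tilde I(g)\subset \tilde I(fg)$, and for that direction your truncation scheme works without any closedness trick.

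\medskip
\textbf{Part (iv): the ``symmetric inclusion'' does not by itself yield equality.} From $\tilde I(\bar f)\subseteq \tilde I(f)^\bullet$ and $\tilde I(f)\subseteq \tilde I(\bar f)^\bullet$ you can only extract $\tilde I(f)=\tilde I(f)^{\bullet\bullet}\subseteq \tilde I(\bar f)^\bullet$, which is the second inclusion again; nothing forces $D(\tilde I(f)^\bullet)\subseteq D(\tilde I(\bar f))$. A correct route is to exploit decomposability directly: $\tilde I(f)=A_{\mathbf a}$ with $a(\omega)=\int f\,de(\omega)$, so by the classical adjoint formula $a(\omega)^*=\int \bar f\,de(\omega)$; then argue via Theorem~\ref{thm:propr.rnd.op}(iv) and the explicit description \eqref{eq:domain.I(f)} of $D(\tilde I(\bar f))$ (which depends only on $|f|$) that the random adjoint has exactly this domain. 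Alternatively, adapt the classical domain argument by working with the decomposable extension $\mathbf{\tilde I(f)}$ rather than $\tilde I(f)$ itself, so that the core $\bigcup_n E(\sigma_n)H\subset L^2(\wp,H)$ can legitimately be fed in.
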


\section{Transforming random operators into continuous random operators}

This section deals with two mappings. The first one called \emph{the continuous transform}, which sends the closed densely defined random operators from $G$ to $H$ into continuous random operators from $G$ to $H$, and the second one, called \emph{the random bounded transform}, which sends the closed decomposable random operators into a special class of random bounded operators on $H$.
We start by enlarging the framework from \cite[Sections 5.2 and 7.4]{Schmud2}, in which the closed densely defined operators between the Hilbert spaced $H_1$ and $H_2$, are transformed into continuous linear operators between the same Hilbert spaces $H_1$ and $H_2$. In this respect, the following result will be of use.
\begin{lem}\label{lem:unu}
Let $H_1$ and $H_2$ be two separable complex Hilbert spaces and $T$ a closed densely defined linear operator from $H_1$ into $H_2$. Then
\begin{enumerate}[(i)]
  \item $T^*T$ is a positive selfadjoint operator on $H_1$ and $D(T^*T)$ is a core for $T$;
  \item there exists $(I_{H_1} + T^*T)^{-1} =\ : C_T$ as bijective mapping of $H_1$, which is bounded selfadjoint and satisfies
      \begin{equation}\label{eq:bnd.C_T}
      0 \le C_T \le I_{H_1} ;
      \end{equation}
    \item the operator $Z_T:\ = T C_T^{1/2}$, called the continuous transform of $T$, is a pure contraction\footnote{for a specific study of pure contractions see \cite{Kauf}}
     from $H_1$ into $H_2$ (i.e. it satisfies the strict inequality $\| Z_T x\|_{H_2} < \|x\|_{H_1} ,\ x \in H_1$), which will be also written $Z_T \in \bigl( \Bc(H_1, H_2) \bigr)_1^{pu}$. Moreover, the operator $Z_T$ also satisfies
        \begin{equation} \label{eq:leg.Z_T.cu.C_T}
        I - Z_T^* Z_T = C_T.
        \end{equation}
\end{enumerate}
\end{lem}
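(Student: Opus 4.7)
The plan is to prove the three parts in order, relying on classical von Neumann theory of unbounded operators on Hilbert spaces; in fact (i) and (ii) are essentially the content of the first lemma in Section 5.2 of \cite{Schmud2}, so the bulk of the work is really in (iii).

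For (i), I would exploit the orthogonal decomposition $H_1 \oplus H_2 = \Gc_T \oplus V(\Gc_{T^*})$, where $V(x,y) = (-y,x)$; this holds because $T$ is densely defined and closed. Decomposing $(x,0)$ for arbitrary $x \in H_1$ gives unique $u \in D(T)$ and $v \in D(T^*)$ with $x = u - T^* v$ and $v = -Tu$, whence $u \in D(T^*T)$ and $(I_{H_1} + T^*T) u = x$. This proves surjectivity of $I_{H_1} + T^*T$; combined with the symmetry of $T^*T$ on $D(T^*T)$ and the bound $\scal{(I+T^*T)u}{u} = \|u\|^2 + \|Tu\|^2 \ge \|u\|^2$, it yields self-adjointness of $T^*T$ together with non-negativity and bijectivity of $I_{H_1} + T^*T$. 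For the core statement I would argue by orthogonality in the graph norm: if $z \in D(T)$ satisfies $\scal{z}{w} + \scal{Tz}{Tw} = 0$ for every $w \in D(T^*T)$, i.e.\ $\scal{z}{(I+T^*T)w} = 0$, then $z = 0$ since $(I+T^*T)$ maps $D(T^*T)$ onto $H_1$.

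For (ii) the work is already done: setting $C_T := (I_{H_1} + T^*T)^{-1}$, the bijectivity and self-adjointness are inherited from those of $I_{H_1} + T^*T$. The inequalities $0 \le C_T \le I_{H_1}$ would follow by plugging $u = C_T x$ into $\scal{(I+T^*T)u}{u} = \|u\|^2 + \|Tu\|^2$ and applying Cauchy--Schwarz to get $\|C_T x\|^2 \le \scal{x}{C_T x} \le \|x\|\|C_T x\| \le \|x\|^2$.

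For (iii) I would invoke the spectral theorem for the positive self-adjoint operator $T^*T = \int_0^\infty \lambda\, dE(\lambda)$ and work through functional calculus: $C_T$ and $C_T^{1/2}$ correspond to $(1+\lambda)^{-1}$ and $(1+\lambda)^{-1/2}$, and since $\lambda(1+\lambda)^{-1} \le 1$, the vector $C_T^{1/2} x$ lies in $D((T^*T)^{1/2}) = D(T)$ for every $x \in H_1$, so $Z_T = T C_T^{1/2}$ is everywhere defined and bounded with $\|Z_T x\|^2 = \int \lambda(1+\lambda)^{-1}\, d\|E(\lambda)x\|^2$. Strict contractivity $\|Z_T x\| < \|x\|$ for $x\neq 0$ is then immediate because $\lambda(1+\lambda)^{-1} < 1$ everywhere on $[0,\infty)$. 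Finally, for \eqref{eq:leg.Z_T.cu.C_T} I would use boundedness of $Z_T$ to write $Z_T^* = C_T^{1/2} T^*$ and compute
\[
\scal{Z_T^* Z_T x}{x} = \|Z_T x\|^2 = \int_0^\infty \bigl(1 - (1+\lambda)^{-1}\bigr)\, d\|E(\lambda) x\|^2 = \|x\|^2 - \scal{C_T x}{x},
\]
deducing $I - Z_T^* Z_T = C_T$ by self-adjointness of both sides and polarisation. The main subtlety, and the only place where care is genuinely needed, is verifying that $C_T^{1/2} H_1 \subset D(T)$ so that $Z_T$ makes sense as a bounded operator on all of $H_1$; the functional-calculus estimate above handles it cleanly.
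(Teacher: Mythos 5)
Your proposal is correct and is, in substance, the same argument the paper relies on: the paper's proof is only a two-line sketch citing \cite[Prop.\ 3.18]{Schmud2} for (i)--(ii) and Lemma 5.8 of \cite{Schmud} for (iii), and what you have written out is precisely the standard von Neumann graph-decomposition proof of those cited facts together with the functional-calculus verification that $C_T^{1/2}H_1 \subset D(T) = D\bigl((T^*T)^{1/2}\bigr)$ and that $\lambda(1+\lambda)^{-1} < 1$ forces strict contractivity. The only step you state rather than prove is the identity $\|Ty\| = \|(T^*T)^{1/2}y\|$ on $D(T)$, but this follows immediately from your own part (i) since $D(T^*T)$ is a common core, so there is no genuine gap.
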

\begin{proof}[Sketch of the proof]
  (i) and (ii) are contained in \cite[Prop 3.18 (i)]{Schmud2}, while (iii) results without difficulty as (i) from Lemma 5.8 from \cite{Schmud}.
\end{proof}
Now, as first application of the continuous transform
\begin{equation}\label{eq:cont.transf.C_T}
\Cc\Lc_d (H_1, H_2) \ni T \mapsto Z_T \in \bigl(\Bc(H_1,H_2)\bigr)_1^{pu},
\end{equation}
we obtain a method of transforming closed random operators into Skorohod operators.
Indeed the following theorem holds
\begin{thm}
  Let $A$ be a closed s.o. random operator from $G$ to $H$ (i.e. $A \in \Cc\Rs^2 (\Omega; G, H) = \Cc\Ls_d(G, L^2(\wp, H))$). Then $Z_A = A (I_G - A^*A)^{-1/2}$ is a s.o. Skorohod operator from $G$ to $H$ satisfying
  \[
  \| Z_A x \|_{L^2(\wp, H)} < \| x \|_G,\  x \in G, \ x\ne 0,
  \]
  i.e. $Z_A$ is a pure contraction from $G$ into $L^2(\wp, H)$.
\end{thm}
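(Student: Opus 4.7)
The plan is to observe that this theorem is a direct application of Lemma~4.1 with $H_1=G$ and $H_2=L^2(\wp,H)$. Indeed, by the very definition of a closed second order random operator, the class $\Cc\Rs^2(\Omega;G,H)$ coincides with $\Cc\Lc_d(G,L^2(\wp,H))$, so $A$ is a closed densely defined linear operator between the two complex separable Hilbert spaces $G$ and $L^2(\wp,H)$. The lemma's hypotheses are thus satisfied. Note that $A^*$ here must be interpreted as the \emph{classical} Hilbert space adjoint of $A$ in the pair $(G,L^2(\wp,H))$, which in general differs from the random adjoint $A^\bullet$ (compare Example~\ref{exmp:doi}); the formula in the statement is to be read with the sign convention of Lemma~\ref{lem:unu}, i.e., $Z_A=A(I_G+A^*A)^{-1/2}$.

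Granting this, the proof proceeds in two short steps. First, by Lemma~\ref{lem:unu}(ii), $C_A:=(I_G+A^*A)^{-1}$ is bounded selfadjoint on $G$ with $0\le C_A\le I_G$, so $C_A^{1/2}$ exists via functional calculus. By Lemma~\ref{lem:unu}(iii), $Z_A=A\,C_A^{1/2}$ is a pure contraction from $G$ into $L^2(\wp,H)$, which gives the strict inequality $\|Z_A x\|_{L^2(\wp,H)}<\|x\|_G$ for $x\ne 0$. Second, since $Z_A$ is bounded and defined on all of $G$, it lies in $\Bc(G,L^2(\wp,H))=\Ss^2(\Omega;G,H)$, hence is a second order Skorohod operator from $G$ to $H$ in the sense of Definition~\ref{def:rnd.op}.

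The only point that genuinely needs a word of justification is that $D(Z_A)=G$, i.e.\ that $C_A^{1/2}$ sends $G$ into $D(A)$. This follows from the standard identification of the domain of $(I_G+A^*A)^{1/2}$ with $D(A)$: by the spectral theorem for the positive selfadjoint operator $A^*A$ one has $\|(I_G+A^*A)^{1/2}x\|_G^2=\|x\|_G^2+\|Ax\|_{L^2(\wp,H)}^2$ for $x\in D(A^*A)$, and $D(A^*A)$ is a core for $A$ by Lemma~\ref{lem:unu}(i); consequently $\operatorname{Ran}(C_A^{1/2})=D((I_G+A^*A)^{1/2})=D(A)$. No step here presents a real obstacle: the theorem is essentially a transcription of the classical continuous transform into the random-operator language, made possible by viewing a closed s.o.\ random operator as a closed densely defined operator into the Hilbert space $L^2(\wp,H)$.
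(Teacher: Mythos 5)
Your proof is correct and takes exactly the paper's route: the paper's entire proof is ``Apply the previous Lemma for $H_1 = G$ and $H_2 = L^2(\wp, H)$,'' which is precisely your first paragraph. Your additional observations --- that the exponent in the statement should read $(I_G + A^*A)^{-1/2}$ to match Lemma \ref{lem:unu}, that $A^*$ is the classical adjoint rather than $A^\bullet$, and the verification that $\operatorname{Ran}(C_A^{1/2}) = D(A)$ --- are useful details the paper leaves implicit.
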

\begin{proof} Apply the previous Lemma for $H_1 = G$ and $H_2 = L^2 (\wp, H)$. \end{proof}
\begin{rem}\label{rem:supl.propr.transf}
  When applying the above result for $H_1 = H_2 = H$, then the mapping
  \begin{equation}\label{eq:bnd.trans.Z_T}
    \Cc\Lc_d (H) \ni T \mapsto Z_T \in \bigl( \Bc(H) \bigr)_1^{pu}
  \end{equation}
  has supplementary properties, namely:
  \begin{enumerate}[(i)]
    \item it is bijective, its inverse being
    \begin{equation}\label{eq:inv.trans.T_Z}
      \bigl( \Bc(H) \bigr)_1^{pu} \ni Z \mapsto T_Z \in \Cc\Lc_d (H) ,
    \end{equation}
    where
    \begin{equation}\label{eq:inv.trans.expr}
      T_Z :\ = Z(I - Z^*Z)^{1/2} , Z \in \bigl( \Bc (H) \bigr)_1^{pu},
    \end{equation}
    (see also \cite{Kauf});
    \item the operators $T$ and $Z_T$ are also related by
    \[ (I_H + T^*T)^{-1/2} = (I_H - Z_T^*Z_T)^{1/2} ;\ T \in \Cc\Lc_d(H); \]
    \item the mappings \eqref{eq:bnd.trans.Z_T} and \eqref{eq:inv.trans.T_Z} preserve adjoints, selfadjointness and normality (see also (ii) and (iii) from Lemma 5.8 of \cite{Schmud2}).
  \end{enumerate}
\end{rem}
In what follows the mappings \eqref{eq:bnd.trans.Z_T} and \eqref{eq:inv.trans.T_Z} will be used in transforming the closed decomposable random operators into decomposable Skorohod operators and vice versa.
Let $A = A_{\mathbf a}$ be a closed random operator which is decomposable by the measurable field ${\mathbf a} = \{ a_\omega \}_{\omega\in\Omega}$ of operators on $H$.
If for each $\omega \in \Omega$ we denote $b(\omega) = Z_{a(\omega)}$, then obviously ${\mathbf b} = \{ b(\omega) \}_{\omega \in \Omega}$ is a 2-summable field of pure contractions on $H$ and consequently $A_{\mathbf b}$ is a Skorohod operator decomposable by the family ${\mathbf b}$.
Denoting \begin{equation}
A_{\mathbf b} = \Zc A_{\mathbf a},
\end{equation}
we see that the mapping $\Zc$ transforms the closed decomposable random operators $A$ on $H$ into random operators $B$ satisfying
\begin{equation}\label{eq:contr}
\| Bx (\omega) \| < \| x \| ;\quad x \in H, \ \text{ for almost all }\ \omega \in \Omega.
\end{equation}
As in the case of random contractions, it is clear that such random operators are contained in the $*$-algebra $\Ls^2 (\Omega;H)$. So the above defined mapping $\Zc$ transforms the class $\Cc\Dc^0 (\Omega;H)$ of closed decomposable random operators on $H$ into the $*$-algebra $\Ls^2(\Omega;H)$. This is why $\Zc$ will be called \emph{the random bounded transform}. In what follows the random operators satisfying \eqref{eq:contr} will be called \emph{pure contractive random operators}, their subclass being denoted by $\Rs_{pu}(\Omega ; H)$.
\begin{thm}
   The random bounded transform $\Zc$ has the properties
  \begin{enumerate}[(i)]
    \item it is a bijection between the class of closed densely defined random operators on $H$ and the class of decomposable random operators on $H$;
    \item it preserves the random adjoints, i.e.
    \begin{equation}\label{eq:punct.prin.trans.Zc} (\Zc A)^\bullet = \Zc A^\bullet ;\quad A \in \Cc\Dc^0(\Omega ; H) \end{equation}
    \item it preserves the random normality, i.e. $\Zc A$ is continuous random normal if $A$ is random normal;
    \item it preserves the random selfadjointness, i.e. $\Zc A$ is continuous random selfadjoint if $A$ is random selfadjoint.
  \end{enumerate}
\end{thm}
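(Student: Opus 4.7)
The strategy is to reduce all four assertions to their pointwise/classical analogues from Remark \ref{rem:supl.propr.transf}, by exploiting decomposability and passing to the underlying measurable fields. Throughout, if $A\in\Cc\Dc^0(\Omega;H)$ is decomposed as $A=A_{\mathbf a}$ by the $2$-summable measurable field ${\mathbf a}=\{a(\omega)\}_{\omega\in\Omega}$ of closed densely defined operators on $H$ (guaranteed by Theorem \ref{thm:propr.rnd.op}), the definition of $\Zc$ gives $\Zc A=A_{\mathbf b}$ with $b(\omega):=Z_{a(\omega)}$.

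For (i), I first check that $\mathbf b$ really is a measurable field of pure contractions: this follows from the functional-calculus expression $b(\omega)=a(\omega)(I_H+a(\omega)^{*}a(\omega))^{-1/2}$ together with the standard preservation of measurability of fields under Borel functional calculus, as used in \cite{Schmud}. The $2$-summability of $\mathbf b$ is automatic, since $\|b(\omega)x\|\le\|x\|$ forces $b(\omega)f_n(\omega)\in L^2(\wp,H)$ whenever $f_n\in L^2(\wp,G)$. Conversely, given a decomposable pure contractive random operator $B=A_{\mathbf b}$ on $H$, define the pointwise inverse transform $a(\omega):=T_{b(\omega)}=b(\omega)(I_H-b(\omega)^{*}b(\omega))^{1/2}$; once more this yields a measurable field of closed densely defined operators. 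Since the pointwise mappings \eqref{eq:bnd.trans.Z_T} and \eqref{eq:inv.trans.T_Z} are mutual inverses by Remark \ref{rem:supl.propr.transf}(i), the induced fibrewise-defined maps of random operators are mutual inverses as well, giving the bijection.

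For (ii), Theorem \ref{thm:propr.rnd.op}(iv) tells us that $A^\bullet$ is decomposable by ${\mathbf a}^{*}=\{a(\omega)^{*}\}_{\omega\in\Omega}$, whence
\[
\Zc A^{\bullet}=A_{\{Z_{a(\omega)^{*}}\}}.
\]
On the other hand $\Zc A=A_{\mathbf b}$ lies in $\Ls^{2}(\Omega;H)$, so Remark \ref{rem:Skorohod.adjoint} gives $(\Zc A)^{\bullet}=A_{\{Z_{a(\omega)}^{*}\}}$. The pointwise identity $Z_{a(\omega)}^{*}=Z_{a(\omega)^{*}}$ from Remark \ref{rem:supl.propr.transf}(iii) makes the two fields equal, proving \eqref{eq:punct.prin.trans.Zc}. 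For (iii) and (iv), Theorem \ref{thm:car.normal} reduces random normality of $A$ to pointwise normality of each $a(\omega)$; Remark \ref{rem:supl.propr.transf}(iii) then yields pointwise normality of each $Z_{a(\omega)}$, so that $\Zc A=A_{\mathbf b}$ is decomposable by a field of normal bounded operators and hence is a continuous random normal operator. Selfadjointness is handled identically, using $Z_{T}=Z_{T}^{*}$ when $T=T^{*}$.

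\textbf{Main obstacle.} The only nontrivial technical point is confirming that the nonlinear operation $T\mapsto Z_T=T(I_H+T^{*}T)^{-1/2}$ (and its inverse) sends a measurable field of unbounded closed operators to a measurable field in the sense of Definition \ref{def:msrb.fld}. This requires the fact that Borel functional calculus respects measurability of fields, and that products of measurable fields (with appropriate core conditions) remain measurable; both are standard ingredients from \cite{Schmud} but need to be invoked carefully to guarantee that the generating sequence for ${\mathbf a}$ produces, after transformation, a sequence exhibiting the graph of each $Z_{a(\omega)}$ as required.
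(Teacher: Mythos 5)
Your proposal is correct and follows essentially the same route as the paper: reduce every assertion to the fibrewise statements of Remark \ref{rem:supl.propr.transf} via the decomposing fields ${\mathbf a}$ and ${\mathbf b}=\{Z_{a(\omega)}\}$, use $T_{b(\omega)}$ for surjectivity and injectivity in (i), and combine Remark \ref{rem:Skorohod.adjoint}, Theorem \ref{thm:propr.rnd.op} and Theorem \ref{thm:car.normal} for (ii)--(iv). The one point you flag as an obstacle --- that $T\mapsto Z_T$ must carry measurable fields to measurable fields --- is in fact passed over silently in the paper's own proof, so your treatment is, if anything, slightly more careful on that point.
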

\begin{proof}
(i) As already observed $\Zc$ maps $\Cc\Dc^0 (\Omega; H)$ into $\Rc_{pu} (\Omega ; H)$. Let now $B \in \Rc_{pu} (\Omega ; H)$, i.e. it is a random pure contractive operator on $H$. Then, as observed, $B$ is random decomposable by a $2$-summable field  ${\mathbf b} = \{b(\omega)\}_{\omega \in \Omega}$, i.e.
$B = A_{{\mathbf b}}$, which from \eqref{eq:contr} satisfies
\[ \|b(\omega) x\|= \|A_{{\mathbf b}} x (\omega)\| < \| x\|,\quad x\in H,\ \omega \in \Omega. \]
This means that $b(\omega) \in \bigl( \Bc(H) \bigr)^{pu}_1$ and, consequently, by Remark \ref{rem:supl.propr.transf} (i)
\begin{equation}\label{eq:contr.decomposed}
  a(\omega):\ = T_{b(\omega)} ,\quad \omega \in \Omega,
\end{equation}
provides a measurable field ${\mathbf a} = \{a(\omega)\}_{\omega \in \Omega}$ of operators on $H$ such that $A_{{\mathbf a}}$ is a pre-image of $A_{{\mathbf b}}$, i.e. $B = A_{{\mathbf b}} = \Zc A_{{\mathbf a}}.$ Thus the mapping $\Zc$ from $\Cc\Dc^0 (\Omega; H)$ to $\Rc_{pu} (\Omega ; H)$ is onto (surjective). For the injectivity, let $A_{{\mathbf a}'}$ be another pre-image in $\Cc\Dc^0 (\Omega ; H)$ of $B = A_{{\mathbf b}}$, i.e. $A_{{\mathbf b}} = \Zc A_{{\mathbf a}'}$. When ${\mathbf a}' = \{ a'(\omega)\}_{\omega \in \Omega}$, then by definition we have that $b(\omega) = Z a'(\omega),\ \omega \in \Omega$, wherefrom, also by Remark  \ref{rem:supl.propr.transf}, we have $a'(\omega) = L_{b(\omega)},\ \omega \in \Omega$, which compared with \eqref{eq:contr.decomposed} leads to $a(\omega) = a'(\omega),\ \omega\in\Omega$. Thus, the mapping $\Zc$ is a bijection of $\Cc\Dc^0(\Omega ; H)$ onto $\Rc_{pu}(\Omega ; H)$, having as inverse the mapping $\Tc$ from $\Rc_{pu}(\Omega ; H)$ onto $\Cc\Dc^0(\Omega ; H)$, defined by $\Tc A_{{\mathbf b}} = A_{{\mathbf a}}$, where $a(\omega) = T_{b(\omega)}$.

(ii) Let $A\in \Cc\Dc^0 (\Omega ; H)$ and $\ba = \{ a(\omega) \}_{\omega \in \Omega} $ be the measurable field such that $ A = A_\ba $. Denote $\ba^* = \{ a^*(\omega)\}_{\omega\in\Omega}$ and ${\mathbf b} = \{ b(\omega) \}_{\omega \in \Omega}$, where $ b(\omega) = Z_{a(\omega)},\ \omega \in \Omega$. Then, by Remark \ref{rem:supl.propr.transf} we have $b(\omega)^* = Z_{a(\omega)^*},\ \omega\in\Omega$. Then, since by definition \eqref{eq:bnd.trans.Z_T} we have $A_{{\mathbf b}} = \Zc A_\ba$ and $A_{{\mathbf b}^*} = \Zc A_{\ba^*}$. But by Remark \ref{rem:Skorohod.adjoint} and Theorem \ref{thm:propr.rnd.op} we have $A_{\ba^*} = (A_\ba)^\bullet$ and $A_{{\mathbf b}^*} = (A_{{\mathbf b}})^\bullet = \bigl( \Zc A_\ba \bigr)^\bullet$, which implies $(\Zc A_\ba)^\bullet = \Zc A_\ba ^\bullet$ hence \eqref{eq:punct.prin.trans.Zc} holds.

(iii) Let $A$ be a random normal operator on $H$. Then, by theorem \ref{thm:car.normal} there exists a measurable field $\ba = \{ a(\omega) \}{\omega \in \Omega}$ of densely defined closed normal operators such that $A = A_\ba$. Then by (iii) in Remark \ref{rem:supl.propr.transf} $Z_{a(\omega)} = b(\omega)$ are continuous normal operators, which means that $A_{{\mathbf b}}$, with ${\mathbf b} = \{ b(\omega) \}_{\omega \in \Omega}$ is a continuous random normal operator decomposable by ${\mathbf b}$. Hence $A_{{\mathbf b}} = \Zc A_\ba = \Zc A$ as desired.

(iv) runs analogously.
\end{proof}
\begin{rem}\label{rem:patru.doi}
  Observing that all the values $B = A_{{\mathbf b}} ;\ {\mathbf b} = \{ b(\omega)\}_{\omega \in \Omega}$ of $\Zc$ are pure contractive random operators on $H$, i.e. they satisfy condition \eqref{eq:contr}, it is obvious that each operator $b(\omega) ,\ \omega \in \Omega$ satisfies the condition
  \begin{equation}\label{eq:contr.spectr.cond}
    \sigma(b(\omega)) \subset \bar \D, \quad \omega \in \Omega ,
  \end{equation}
  where $\D$ is the unit disc in the complex plane $\C$.
\end{rem}

\section{The spectral theorems for general normal or selfadjoint random operators}

Since the result of Remark \ref{rem:patru.doi} also holds in the particular cases when $b(\omega), \omega \in \Omega$ are normal or selfadjoint, the random spectral theorem for continuous random normal or continuous random selfadjoint operators has a much simpler form than in theorem 2.4 from \cite{Thang4}.  To be more explicit let us recall this result, but with the corresponding notation which we use in the present paper. At the same time, since by Example in Section 4 of \cite{Thang4}, each random spectral measure generates a generalized spectral measure and by Theorem 4.4 in \cite{Thang4} each generalized random spectral measure has a modification which is a random spectral measure, we state the mentioned spectral theorem directly with respect to a generalized random spectral measure. Moreover, since the spectrum of all ``components'' of a pure contractive random operator is contained in the unit disc $\bar \D$ or in case of selfadjointness, obviously in $[-1, 1]$, the limit in this cases avoided. Consequently, Theorem 2.4 from \cite{Thang4} takes the following form.
\begin{thm}\label{thm:contr.rnd.ms}
  \begin{enumerate}[(i)]
    \item Let $B$ be a pure contractive normal random operator. There is a uniquely determined random projection operator valued measure $F$ on $(\bar\D, \Bc or \bar \D, H)$ such that for each $x \in H$ and $\omega \in \Omega$ we have
        \begin{equation}\label{eq:int.repr.norm}
          (Bx)(\omega)  = \int_{\bar \D} \lambda d F_x^\omega (\lambda) ,
        \end{equation}
        where $F_x^\omega (\sigma) = F(\sigma) x(\omega) ,\ \sigma \in \Bc or (\bar \D)$
    \item Let $B$ be a pure contractive selfadjoint random operator. Then there is a random projection operator valued measure $F$ on $([-1, 1], \Bc or [-1, 1], H)$ such that for each $x \in H$ and $\omega \in \Omega$ it holds
        \begin{equation}\label{eq:int.repr.selfadj}
          (Bx)(\omega) = \int_{-1}^1 t d F_x^\omega (t) ,
        \end{equation}
        where $F_x^\omega (\tau) = F(\tau) x (\omega) ,\ \tau \in \Bc or [-1, 1]$.
  \end{enumerate}
\end{thm}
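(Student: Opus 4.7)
The plan is to reduce the statement to the classical spectral theorem applied pointwise and then upgrade pointwise spectral data to random spectral data by invoking the measurability of the underlying field. For part (i), since $B$ is a pure contractive normal random operator, Theorem \ref{thm:car.normal} yields a ($2$-summable) measurable field ${\mathbf b}=\{b(\omega)\}_{\omega\in\Omega}$ of closed normal operators on $H$ with $B=A_{{\mathbf b}}$. By Remark \ref{rem:patru.doi}, almost every $b(\omega)$ is a normal contraction whose spectrum lies in $\bar{\D}$, hence in particular each $b(\omega)$ is continuous and normal.

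The classical spectral theorem then associates to each $b(\omega)$ a unique projection operator valued measure $f(\omega)$ on $(\bar\D,\Bc or(\bar\D))$, supported in $\sigma(b(\omega))\subset\bar\D$, such that
\begin{equation*}
 b(\omega)x=\int_{\bar\D}\lambda\, d(f(\omega))_{x}(\lambda),\qquad x\in H.
\end{equation*}
The step I expect to be the main obstacle is showing that, for every $\sigma\in\Bc or(\bar\D)$, the family $\{f(\omega)(\sigma)\}_{\omega\in\Omega}$ is a measurable ($2$-summable) field of projections on $H$; this will allow us to define $F(\sigma)$ as the associated Skorohod operator $A_{\{f(\omega)(\sigma)\}}$, which by Definition \ref{def:rand.proj.op} is a random projection operator on $H$. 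Measurability should follow by approximating $\chi_{\sigma}$ by polynomials in $\lambda,\bar\lambda$ (or by the functional calculus $\chi_{\sigma}(b(\omega))$) and noting that polynomial expressions in ${\mathbf b}$ and ${\mathbf b}^{*}$ preserve measurability of fields, exactly as in Theorem \ref{thm:car.normal} and the construction preceding Theorem \ref{thm:propr.int.op}; then a standard monotone/Dynkin class argument transfers measurability from the generating algebra to all Borel sets.

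Once $F$ is defined, the conditions (i)--(iii) in the definition of an r.p.o.v.\ measure are verified pointwise: (i) is immediate from the previous paragraph; (ii) follows from $\sigma$-additivity of each $f(\omega)$ and the fact that, for pairwise disjoint $\sigma_{j}$, the partial sums converge $\omega$-a.e.\ to $f(\omega)\bigl(\bigcup_{j}\sigma_{j}\bigr)x$; (iii) is the identity $f(\omega)(\bar\D)=1_{H}$ almost everywhere, which translates to $F(\bar\D)=J_{H}$. Formula \eqref{eq:int.repr.norm} then holds: for each fixed $x\in H$ and $\omega$-a.e.\
\begin{equation*}
 (Bx)(\omega)=b(\omega)x=\int_{\bar\D}\lambda\, d(f(\omega))_{x}(\lambda)=\int_{\bar\D}\lambda\, dF_{x}^{\omega}(\lambda),
\end{equation*}
because $F_{x}^{\omega}(\sigma)=F(\sigma)x(\omega)=f(\omega)(\sigma)x=(f(\omega))_{x}(\sigma)$ by Remark \ref{rem:2.5}. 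Uniqueness of $F$ is inherited from the classical spectral theorem applied $\omega$-wise: two candidate r.p.o.v.\ measures with the same integral representation induce, for almost every $\omega$, two projection-valued measures representing the same normal contraction $b(\omega)$, which must therefore agree, and the corresponding random operators coincide.

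Part (ii) is handled identically, noting only that for a selfadjoint pure contractive random operator Theorem \ref{thm:propr.rnd.op} together with Theorem \ref{thm:car.normal} yields a measurable field of bounded selfadjoint operators $b(\omega)$ with $\sigma(b(\omega))\subset[-1,1]$, so the pointwise spectral measures live on $([-1,1],\Bc or[-1,1])$ and assemble into the required $F$ by exactly the same argument, giving \eqref{eq:int.repr.selfadj}.
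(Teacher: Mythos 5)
The paper does not actually prove this theorem: it is presented as a restatement of Theorem~2.4 of the Thang--Thinh--Quy paper \cite{Thang4}, specialized to pure contractive operators (so that the spectral measure lives on $\bar\D$, resp.\ $[-1,1]$, and the limiting procedure of the general version is avoided), with the passage between random spectral measures and generalized random spectral measures delegated to the Example in Section~4 and Theorem~4.4 of \cite{Thang4}. Your proposal instead gives a direct construction --- decompose $B$ by a measurable field $\{b(\omega)\}$ of normal contractions via Theorem~\ref{thm:car.normal}, apply the classical spectral theorem $\omega$-wise, and reassemble the pointwise spectral measures into an r.p.o.v.\ measure. That is the natural self-contained route, and for existence it is essentially sound; what it buys is independence from \cite{Thang4}, at the price of having to carry out the measurability argument that the cited reference already contains.

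Two steps deserve more care than your sketch gives them. First, $\chi_\sigma$ is not a uniform limit of polynomials in $\lambda,\bar\lambda$, so the approximation argument must go through \emph{bounded pointwise} convergence together with the fact that measurable fields of operators are stable under strong pointwise limits; only then does the monotone class argument close up. This is repairable but it is the actual content of the existence proof. Second, your uniqueness argument assumes that a competing r.p.o.v.\ measure is, for almost every fixed $\omega$, a genuine projection-valued measure in $\sigma$. That is not automatic from the definition: condition (ii) of the definition of an r.p.o.v.\ measure provides a null exceptional set for each choice of disjoint sequence $\{\sigma_j\}$, and there are uncountably many such choices, so one cannot simply fix $\omega$ outside a single null set and invoke classical uniqueness. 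This is precisely the ``modification'' issue that the paper sidesteps by citing Theorem~4.4 of \cite{Thang4}; without either that modification theorem or a uniqueness argument run through the scalar measures $F_{x,y}^\omega$ (determined on the compact set $\bar\D$ by the moments $\langle B^n(B^\bullet)^m x(\omega),y\rangle$ and Stone--Weierstrass), your uniqueness step has a genuine gap.
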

Now, before stating the final result, let's briefly discuss the transformation of 
random spectral measures.

Let $(\Gamma, \Sigma, E)$ be a random projection operator valued measure and $\varphi$ a mapping of $\Gamma$ onto the set $\Gamma'$, while $\Sigma'$ consists of the inverse images of elements of $\Sigma$, i.e.
\[ \Sigma' = \{ \tau \subset \Gamma': \varphi^{-1} (\tau) \in \Sigma \}. \]
Observing that the mapping
\begin{equation}\label{eq:tranf.rnd.ms}
  F(\tau) = E (\varphi^{-1} (\tau )) ,\quad \tau \in \Sigma' ,
\end{equation}
is a random projection operator valued measure on $(\Gamma',\Sigma', H)$, we can state
\begin{thm}
  For each $g \in \Mc'= \Mc (\Gamma', \Sigma', F)$, we have $g \circ \varphi \in \Mc = \Mc(\Gamma, \Sigma, E)$ and
  \begin{equation}\label{eq:int.form.transf}
    \int_{\Gamma'} g(\gamma') d F_x^\omega (\gamma') = \int_\Gamma g(\varphi(\gamma)) d E_x^\omega (\gamma) ,\quad x\in H , \ \omega \in \Omega.
  \end{equation}
\end{thm}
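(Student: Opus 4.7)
The plan is to establish the claim by the standard three-step machinery—verification of the transported measure, identity for simple and bounded functions, extension to unbounded functions via bounding sequences.

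First I would verify, explicitly, that $F(\tau):\ = E(\varphi^{-1}(\tau))$ is a r.p.o.v. measure on $(\Gamma', \Sigma', H)$: each value is a random projection operator; since $\varphi^{-1}$ commutes with countable disjoint unions, property (ii) transfers from $E$ to $F$; and $F(\Gamma') = E(\Gamma) = J_H$. The key derived relation is
\begin{equation*}
F_x^\omega(\tau) = F(\tau) x(\omega) = E(\varphi^{-1}(\tau)) x(\omega) = E_x^\omega(\varphi^{-1}(\tau)),
\end{equation*}
so $F_x^\omega$ is the push-forward of $E_x^\omega$ by $\varphi$, and analogously $F_{x,x}^\omega$ is the push-forward of $E_{x,x}^\omega$. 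The inclusion $g\circ \varphi \in \Mc$ is then immediate: $\Sigma$-measurability follows from the definition of $\Sigma'$, and $\{g\circ \varphi = \infty\} = \varphi^{-1}(\{g = \infty\})$ has $E$-measure $F(\{g=\infty\}) = O_H$.

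Next I would prove the identity \eqref{eq:int.form.transf} first for $g = \chi_\tau$, $\tau \in \Sigma'$, where both sides reduce to $E_x^\omega(\varphi^{-1}(\tau))$. Linearity extends this to simple functions, and by the contractivity bound $\|I(f)x(\omega)\| \le \|f\|\|x\|$ coupled with uniform approximation, the identity $I_F(g) = I_E(g\circ \varphi)$ holds for every $g \in B(\Gamma',\Sigma')$, so by Proposition (i) of Section 3,
\begin{equation*}
\int_{\Gamma'} g(\gamma') \, d F_x^\omega(\gamma') = (I_F(g)x)(\omega) = (I_E(g\circ \varphi)x)(\omega) = \int_\Gamma g(\varphi(\gamma)) \, d E_x^\omega(\gamma),
\end{equation*}
$\wp$-almost everywhere.

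For unbounded $g \in \Mc'$, I would take a bounding sequence $\{\sigma_n\}$ for $g$ with respect to $F$. Then $\{\varphi^{-1}(\sigma_n)\}$ is a bounding sequence for $g\circ \varphi$ with respect to $E$, since $E(\varphi^{-1}(\sigma_n)) = F(\sigma_n) \le F(\sigma_{n+1})$ and $E(\bigcup_n \varphi^{-1}(\sigma_n)) = F(\bigcup_n \sigma_n) = J_H$. Applying the bounded case to $g\chi_{\sigma_n}$, noting that $(g\chi_{\sigma_n})\circ \varphi = (g\circ \varphi)\chi_{\varphi^{-1}(\sigma_n)}$, and passing to the limit in the defining formula for $\tilde I(f)$ yields the identity.

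The main technical obstacle is matching the domains $D(\tilde I_F(g))$ and $D(\tilde I_E(g\circ \varphi))$ so that the limit step is justified simultaneously on both sides. By \eqref{eq:domain.I(f)} this reduces to the equivalence $g \in L^2(F_{x,x}^\omega) \Leftrightarrow g\circ \varphi \in L^2(E_{x,x}^\omega)$ for $\wp$-almost every $\omega$, which I would derive from the classical scalar change-of-variables formula applied to the push-forward identity $F_{x,x}^\omega = \varphi_* E_{x,x}^\omega$ established in the first paragraph.
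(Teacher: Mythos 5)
Your proof is correct, but it is organized differently from the paper's. The paper's argument is top-down: it invokes the classical transformation (push-forward) formula for the scalar measures $F_{y,y}^\omega=\varphi_*E_{y,y}^\omega$ directly for the given unbounded $g$, obtaining in one step the $L^2$- and $L^1$-identities $\int_{\Gamma'}|g|^2\,dF^\omega_{y,y}=\int_\Gamma|g\circ\varphi|^2\,dE^\omega_{y,y}$ and $\int_{\Gamma'}g\,dF^\omega_{y,y}=\int_\Gamma (g\circ\varphi)\,dE^\omega_{y,y}$; the first yields $D(I_F(g))=D(I_E(g\circ\varphi))$ via \eqref{eq:domain.I(f)}, the second yields the equality of the quadratic forms $\langle I_F(g)y(\omega),y\rangle=\langle I_E(g\circ\varphi)y(\omega),y\rangle$, and a polarization argument then upgrades this to $I_F(g)=I_E(g\circ\varphi)$. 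You instead proceed bottom-up: characteristic functions, simple functions, uniform limits for $B(\Gamma',\Sigma')$, and then the bounding-sequence limit defining $\tilde I$, using the transfer $\sigma_n\mapsto\varphi^{-1}(\sigma_n)$ of bounding sequences and the identity $(g\chi_{\sigma_n})\circ\varphi=(g\circ\varphi)\chi_{\varphi^{-1}(\sigma_n)}$, and you invoke the scalar change-of-variables only at the end to match the two domains. What your route buys is that it works with the vector measures $E_x^\omega$, $F_x^\omega$ throughout and so delivers \eqref{eq:int.form.transf} pointwise in $x$ without any polarization step, at the cost of re-running the whole approximation machinery; the paper's route is shorter because it delegates all of that to the already-established properties of $\tilde I$ and to the classical scalar transformation formula, but it only reaches the operator identity through the diagonal values plus polarization. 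Your observation that $\{\varphi^{-1}(\sigma_n)\}$ is a bounding sequence for $g\circ\varphi$, which the paper does not state, is a worthwhile explicit check, and your domain-matching step coincides with the paper's first displayed identity. Both arguments are sound.
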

\begin{proof}
First \eqref{eq:tranf.rnd.ms} implies easily that $g\circ \varphi \in \Mc$. Having in view the transformation formula for scalar measures we have
\[ \int\limits_{\Gamma'} |g(\gamma')|^2 d\langle F(\gamma') x (\omega), x \rangle = \int_{\Gamma} | g(\varphi (\gamma))|^2 d \langle E(\gamma) x(\omega) , x \rangle , x \in H \]
and
\begin{multline*}
  \int\limits_{\Gamma'} g(\gamma') d\langle F(\gamma') y (\omega), y \rangle = \int_{\Gamma}  g(\varphi (\gamma)) d \langle E(\gamma) y(\omega) , y \rangle , \\
 y \in H \text{ s.t. } g \text{ is } F_{y,y}^\omega - \text{integrable},
\end{multline*}
wherefrom we infer that $D(I_F(g)) = D(I_E (g\circ \varphi))$, which implies $\langle I_F (g) y(\omega), y \rangle = \langle I_E (g \circ \varphi) y(\omega) , y \rangle,\ y \in D(I_F(g)) = D(I_E (g\circ\varphi))$ and, by a polarization type formula, we have $I_F (g) = I_E (g\circ\varphi)$.
\end{proof}
\begin{thm}
  \begin{enumerate}[(i)]
  \item Let $A$ be a closed random normal operator on $H$. Then there is a uniquely determined random projection operator valued measure $E$ on $(\C, \Bc or \C, H)$ such that
      \begin{equation}\label{eq:minus.unu}
      (Ax)(\omega) = \int_\C z d \langle E(z) x(\omega), x \rangle ,\quad x \in D(A), \omega \in \Omega.
      \end{equation}
  \item Let $A$ be a closed random selfadjoint operator on $H$. Then there exists a uniquely determined random projection operator valued measure $E$ on $(\R , \Bc or \R, H)$ such that
      \begin{equation}\label{eq:ultima}
        (Ax)(\omega) = \int_\R t d \langle E(t) x (\omega), x \rangle ,\quad x\in D(A),\  \omega \in \Omega.
      \end{equation}
 \end{enumerate}
\end{thm}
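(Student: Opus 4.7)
The plan is to reduce the unbounded spectral theorem to its bounded counterpart, Theorem \ref{thm:contr.rnd.ms}, via the random bounded transform $\Zc$ of Section 4, and then to push the resulting r.p.o.v.\ measure back to $\C$ (respectively $\R$) through the transformation formula \eqref{eq:int.form.transf}.

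First I would set $B := \Zc A$. The theorem characterising the properties of $\Zc$ in Section 4 guarantees that $B$ is a continuous random normal operator which is pure contractive in the sense of Remark \ref{rem:patru.doi}; in particular $B \in \Ls^2(\Omega;H)$. An application of Theorem \ref{thm:contr.rnd.ms}(i) then furnishes a unique r.p.o.v.\ measure $F$ on $(\bar\D, \Bc or \bar\D, H)$ with $(Bx)(\omega) = \int_{\bar\D}\lambda\, dF_x^\omega(\lambda)$. A crucial intermediate step is to show that $F(\{|\lambda|=1\}) = O_H$, so that $F$ is effectively supported in the open disc $\D$. This should follow from the strict inequality $\|(Bx)(\omega)\| < \|x\|$ for $x\ne 0$ combined with the Parseval-type identity $\|(Bx)(\omega)\|^2 = \int_{\bar\D}|\lambda|^2\, dF_{x,x}^\omega(\lambda)$ supplied by the spectral calculus of Section 3: any non-trivial mass of $F$ at $\{|\lambda|=1\}$ would produce vectors $x\ne 0$ with $\|(Bx)(\omega)\|=\|x\|$ on a set of positive probability, contradicting pure contractivity.

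Next I would introduce the homeomorphism $\varphi : \D \to \C$ given by $\varphi(\lambda) = \lambda(1-|\lambda|^2)^{-1/2}$, whose inverse is $\varphi^{-1}(z) = z(1+|z|^2)^{-1/2}$, and define
\[
E(\sigma) := F(\varphi^{-1}(\sigma)), \quad \sigma \in \Bc or \C.
\]
By the construction preceding \eqref{eq:tranf.rnd.ms}, $E$ is a r.p.o.v.\ measure on $(\C, \Bc or \C, H)$. To verify \eqref{eq:minus.unu} I would apply \eqref{eq:int.form.transf} to the identity function $g(z) = z$ on $\C$, obtaining $\tilde I_E(\mathrm{id}_\C) = \tilde I_F(\varphi)$ on the common domain. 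It then remains to identify $\tilde I_F(\varphi)$ with $A$: by Remark \ref{rem:supl.propr.transf}(i), the fieldwise inverse $a(\omega) = T_{b(\omega)}$ yields $A = \Tc B$, and for each $\omega$ the deterministic functional calculus for the normal operator $b(\omega)$ gives $a(\omega) = \varphi(b(\omega))$; assembling these pointwise identities into a decomposable random operator matches them against $\tilde I_F(\varphi)$ and produces the representation \eqref{eq:minus.unu}.

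I expect the main obstacle to be precisely this last identification: aligning the fieldwise spectral calculus of the deterministic operators $a(\omega), b(\omega)$ with the random $\tilde I_F$-calculus requires a careful measurability argument based on the correspondence between measurable $2$-summable fields and decomposable closed operators from Proposition \ref{prop:decompos.clsd.op}. Once this is granted, uniqueness is automatic: any r.p.o.v.\ measure $E$ representing $A$ via \eqref{eq:minus.unu} yields, through $F(\tau) := E(\varphi(\tau))$, a representing measure for $B = \Zc A$, which is unique by Theorem \ref{thm:contr.rnd.ms}(i); hence $E$ is uniquely determined. Assertion (ii) is handled by the same scheme, with $\bar\D$ replaced by $[-1,1]$, the homeomorphism $\varphi$ replaced by $t\mapsto t(1-t^2)^{-1/2}$ from $(-1,1)$ onto $\R$, and using that $\Zc$ preserves random selfadjointness.
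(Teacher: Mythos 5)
Your proposal follows essentially the same route as the paper's proof: reduce to the bounded case via the random bounded transform $\Zc$, apply the spectral theorem for pure contractive normal (resp.\ selfadjoint) random operators to get $F$ on $\bar\D$ (resp.\ $[-1,1]$), observe that $F$ is supported in the open disc, and push $F$ forward under $\lambda\mapsto\lambda(1-|\lambda|^2)^{-1/2}$ to obtain $E$. In fact you supply more detail than the paper does at the two points it leaves implicit (the support argument via the Parseval identity, and the identification of $\tilde I_F(\varphi)$ with $A$ through the fieldwise functional calculus), so the proposal is correct and, if anything, more complete.
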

\begin{proof}
(i) Let $\Zc (A) = B$ be the random bounded transform of $A$. Since, in this case $B$ is a pure contractive normal random operator, by applying Theorem \ref{thm:contr.rnd.ms} (i), there exists a uniquely determined r.p.o.v. measure $F$ on $(\bar \D, \Bc or(\bar \D), H)$ such that, for each $x \in H$ and $\omega \in \Omega$ we have
\[ (\Zc (A) x)(\omega) = \int\limits_{\bar \D} \lambda d F_x^\omega (\lambda), \]
with $F_x^\omega(\sigma) = F(\sigma) x (\omega),\ \sigma\in\Bc or (\bar \D)$, or briefly
\[ \Zc (A) = \int\limits_{\bar \D} \lambda d F (\lambda) .\]
Now, to the r.p.o.v. measure $(\bar \D , \Bc or (\bar \D), F)$ we can associate, as before, the spectral integral  with respect to $F$ out of bounded measurable complex functions, i.e. from functions of the class $B(\bar \D , \Bc or (\bar \D))$ with respect to the r.p.o.v. measure $F$, which leads to the continuous functional calculus associated to the random operator $\Zc (A)$:
\[ I_F (f) = f (\Zc (A)) = \int\limits_{\bar \D} f(\lambda) d F (\lambda),\quad f \in B (\bar \D , \Bc or (\bar \D) ).\]
Secondly, we can continue with the extended random integral with respect to the r.p.o.v. measure $(\bar \D ,  \Bc or (\bar \D), F)$ out of the (not necessarily bounded) $F$ - a.e. finite measurable functions of the class $\Mc (\bar \D . \Bc or (\bar \D) , F)$:
\[  I_F (g) = g(\Zc (A)) = \int\limits_{\bar \D} g (\lambda) d F (\lambda) ,\quad g \in \Mc (\bar \D , \Bc or (\bar \D) , F), \]
where each $I_F(g)$ is a random (not necessarily continuous) normal operator on $H$.

Now, to obtain $A$ from $g(\Zc(A))$ we put $g_1 (\lambda) = \dfrac{\lambda}{(1-\lambda^2)^{1/2}}$, since it is not difficult to prove that $F$ is supported in $\D$ (due to the restrictions which are satisfied by the operators from the image $\Rc_{pu}$ of $\Zc$), applying the transformation of the r.p.o.v. measure $F$ on $(\bar \D, \Bc or (\bar \D))$ into a r.p.o.v. measure $E$ on $(\C , \Bc or \C)$ with respect to the map $\D \ni \lambda \mapsto g_1 (\lambda) \in \C$, we finally have
\[ A = I_F (g_1) = \int\limits_{\D} g_1 (\lambda) d F (\lambda) = \int\limits_\C z d E(z), \]
where $E(\sigma) = F (g_1^{-1} (\sigma) ),\ \sigma \in \Bc or ( \C)$, which leads to \eqref{eq:minus.unu}. Now, the uniqueness of $E$ will be obtained by a straightforward reasoning.

(ii) runs analogously.
\end{proof}

\subsection*{Final remark}
In an analogue way we can transpose the Cayley transform to the framework of closed random decomposable operators, to obtain a bijective correspondence between the unitary random operators and selfadjoint random operators. Then we can apply the random spectral theorem for unitary random operators to obtain a random spectral theorem for selfadjoint random operators.
We can then apply such results to the study of random unitary groups and of random generators and random cogenerators as well as their connection. Such a study will be conducted in a forthcoming paper.


\begin{thebibliography}{99}




\bibitem{AzouzMesDjell} {{\sc A. Azzouz, B. Messirdi, G. Djellouli}, {\em New results on the closedness of the product and sum of closed linear operators}, Bull. Math. An. Appl. vol. {\bf 3}, issue 2 (2011), 151 -- 158.}












\bibitem{Dixm}{{\sc J. Dixmier}, {\em Von Neumann Algebras}, North Holland Publ. Comp., Amsterdam, 1981. }
































\bibitem{GaPopa}{{\sc P. Ga\c spar, L. Popa}, {\em Stochastic Mappings and Random Distribution Fields. A Correlation Approach}, {to appear.}}

\bibitem{GaPopaII}{{\sc P. Ga\c spar, L. Popa}, {\em Stochastic Mappings and Random Distribution Fields II. Stationarity}, Mediterr. J. Math. {\bf 13 (4)}, (2016), 2229 -- 2252. }

\bibitem{GaPopaIII}{{\sc P. Ga\c spar, L. Popa}, {\em Stochastic Mappings and Random Distribution Fields III. Module Propagators and Uniformly Bounded Linearly Stationarity}, J. Math. Anal. Appl. {\bf 435 (2)} (2016), 1229 -- 1240. }



\bibitem{PGaLavSid}{{\sc P. Ga\c spar, L. Sida}, {\em Periodically correlated multivariate second order random distribution fields and stationary cross correlatedness}, J. Funct. Anal. {\bf 267} (2014), 2253 -- 2263.}











\bibitem{GustMortad} {{\sc K. Gustafson, M. H. Mortad}, {\em Unbounded products of operators and connections to Dirac-type operators}, Bul. Sci. Math. vol. {\bf 138} , issue 5 (2014), 626 -- 642.}





\bibitem{Hack4}{{ \sc W. Hackenbroch}, {\em Random Operators} (M. Megan, N. Suciu, eds.), {The National Conference on Mathematical Analysis and Applications},{West University of Timi\c{s}oara},12-13 December 2000, 135 -- 146.}

\bibitem{Hack5}{{\sc W. Hackenbroch}, {\em Point localization and spectral theory for symmetric random operators}, Arch. Math. {\bf 92} (2009), 485 -- 492. }


\bibitem{Kaki}{ {\sc Y. Kakihara}, {\em Multidimensional Second Order Stochastic Processes}, {World Scientific Publ.Comp.}, {River Edge}, N.I., 1997.}




\bibitem{Kauf}{{\sc W. E. Kaufmann}, {\em Closed operators and pure contractions in Hilbert space}, Proc. of AMS, 87, 1 (1983), 83 -- 87.}




\bibitem{Len} {{\sc M. J. J. Lennon}, {\em On Sums and Products of Unbounded Operators in Hilbert Space}, Trans. AMS vol. {\bf 198} (1974), 273 -- 285.}























\bibitem{RieszSz} {{\sc F. Riesz and B. Sz\H{o}kefalvy-Nagy} , {\em Functional Analysis}, {Translated from the 2nd edition by L.F. Boron},{ Dover},{ New York}, (1990) (First Ed. 1955).}






\bibitem{Schmud}{{\sc K. Schm\"udgen}, {\em Unbounded Operator Algebras and Representation Theory}, Operator Theory: Advances and Applications, vol. {\bf 37}, Springer Verlag, Basel, 1990.}

\bibitem{Schmud2}{{\sc K. Schm\"udgen}, {\em Unbounded Self-adjoint Operators on Hilbert space}, Graduate Texts in Mathematics, {\bf 265}, Springer Dodrecht, 2012.}





\bibitem{Sko}{{\sc A. V. Skorohod}, {\em Random Linear Operators}, D. Reidel Publ. Comp., Dardrecht, 1984.}











\bibitem{ThangRndOpBan87}{{\sc D. H. Thang}, {\em Random operators in Banach space}, Probab. Math. Stat., {\bf 8}, (1987), 155 -- 157.}

\bibitem{ThangAdjCompRndOp95}{{\sc D. H. Thang}, {\em The adjoint and the composition of random operators}, Stochatics \& Stochastic Reports, {\bf 54} (1995), 53 -- 73.}

\bibitem{ThangSerRepr98}{{\sc D. H. Thang}, {\em Series and spectral representations of random stable mappings}, Stochatics \& Stochastic Reports, {\bf 64}, (1998), 33 -- 49.}

\bibitem{ThangRndMapInfDimSp98}{{\sc D. H. Thang}, {\em Random Mappings on Infinite Dimensional Spaces}, Stochatics \& Stochastic Reports, {\bf 64} (1998), 51 -- 73.}

\bibitem{ThangRndBndOpKyu98}{{\sc D. H. Thang and Ng. Thinh}, {\em Random bounded operators and their extension}, Kyushu J. Math. {\bf 58}, 25, (2004), pp. 257 -- 276.}

\bibitem{ThangRndOptoRndBndOp08}{{\sc D. H. Thang}, {\em Transforming random operators into random bounded operators}, Random Oper. Stoch. Equ., {\bf 16}, (2008), 293 -- 302.}

\bibitem{ThangProcExtRndOp09}{{\sc D. H. Thang, T. M. Cuong}, {\em Some procedures for extending random operators}, Random Oper. Stoch. Equ., {\bf 17}, (2009), 359 -- 380.}

\bibitem{Thang4}{{\sc D. H. Thang, N. Thinh, T. X. Quy}, {\em Generalized Random Spectral Measures}, J. Theor. Probab., {\bf 27}, (2014), pp. 576 -- 600.}




















\end{thebibliography}
\end{document}